\newcommand{\sumtwo}[2]{\sum_{\substack{#1 \\ #2}}} 
\newcommand{\abs}[1]{\left| #1\right|}
\newcommand{\calD}{\mathcal{D}}
\newcommand{\calE}{\mathcal{E}}
\newcommand{\calF}{\mathcal{F}}
\newcommand{\calH}{\mathcal{H}}
\newcommand{\calL}{\mathcal{L}}
\newcommand{\calM}{\mathcal{M}}
\newcommand{\calS}{\mathcal{S}}
\newcommand{\calT}{\mathcal{T}}
\newcommand{\fra}{\mathfrak{a}}
\newcommand{\frq}{\mathfrak{q}}
\newcommand{\frs}{\mathfrak{s}}
\newcommand{\frv}{\mathfrak{v}}
\newcommand{\frw}{\mathfrak{w}}
\newcommand{\bbA}{\mathbb{A}}
\newcommand{\bbC}{\mathbb{C}}
\newcommand{\bbD}{\mathbb{D}}
\newcommand{\bbE}{\mathbb{E}}
\newcommand{\bbN}{\mathbb{N}}
\newcommand{\bbP}{\mathbb{P}}
\newcommand{\bbQ}{\mathbb{Q}}
\newcommand{\bbR}{\mathbb{R}}
\newcommand{\bbS}{\mathbb{S}}
\newcommand{\bbZ}{\mathbb{Z}}
\newcommand{\sfe}{{\sf e}}
\newcommand{\ueta}{\underline{\eta}}
\newcommand{\ungamma}{\underline{\gamma}}
\newcommand{\ufrv}{\underline{\frv}}
\newcommand{\ufrw}{\underline{\frw}}
\newcommand{\Zd}{\bbZ^d}
\newcommand{\Rd}{\bbR^d}
\newcommand{\Kq}{{\mathbf K}^q}
\newcommand{\Ka}{{\mathbf K}^a}
\newcommand{\Kql}[1]{{\mathbf K}^q_{#1}}
\newcommand{\Kal}[1]{{\mathbf K}^a_{#1}}
\newcommand{\setof}[2]{\left\{#1 \,:\, #2 \right\}}
\newcommand{\fcone}{Y^>_\delta(h)}
\newcommand{\bcone}{Y^<_\delta(h)}
\newcommand{\lb}{\left(}
\newcommand{\rb}{\right)}
\newcommand{\lbr}{\left\{}
\newcommand{\rbr}{\right\}}
\newcommand{\dd}{{\rm d}}
\newcommand{\tq}[1]{t^\omega_{#1}}
\newcommand{\fq}[1]{f^\omega_{#1}}
\newcommand{\rrq}[1]{r^\omega_{#1}}
\newcommand{\fxq}[1]{f^{\theta_x \omega}_{#1}}
\newcommand{\ta}[1]{{\mathbf t}_{#1}}
\newcommand{\fa}[1]{{\mathbf f}_{#1}}
\newcommand{\qS}[2]{\calS_{#1}^{#2}} 
\newcommand{\aS}[1]{{\mathbf S}_{#1}} 
\newcommand{\cHm}[1]{\calH^-_{#1}}
\newcommand{\1}{\boldsymbol{1}}
\newcommand{\smo}[1]{{\mathrm o}\lb #1\rb }
\newcommand{\df}{\stackrel{\Delta}{=}}
\newcommand{\eqvs}{\stackrel{\sim}{=}}
\newcommand{\leqs}{\lesssim}            
\newcommand{\xpr}{x^\prime}
\newcommand{\Peff}{\mathrm{P_{\text{eff}}}}
\newcommand{\be}{\begin{equation}}
\newcommand{\ee}{\end{equation}}
\renewcommand{\emptyset}{\varnothing}
\begin{document}

\title*{Stretched Polymers in Random Environment}
\author{Dmitry Ioffe and Yvan Velenik}
\institute{Dmitry Ioffe \at Department of Industrial Engineering and Management, Technion, Israel \email{ieioffe@ie.technion.ac.il}
\and Yvan Velenik \at Department of Mathematics, University of Geneva, Switzerland \email{Yvan.Velenik@unige.ch}}
\maketitle

\abstract{We survey recent results and open questions 
on the ballistic phase of stretched polymers in both annealed and
quenched random environments.}

\bigskip
\hfill\emph{This paper is dedicated to Erwin Bolthausen}

\hfill\emph{on the occasion of his 65th birthday}

\section{Introduction}
Stretched polymers or drifted random walks in random potentials could be considered either in their own right or as a more sophisticated and physically more realistic version of directed polymers. Indeed, directed polymers were introduced in~\cite{Kardar} as an effective SOS-type model for domain walls in Ising model with random ferromagnetic interactions~\cite{HenleyHuse}. Thus, directed polymers do not have overhangs or self-intersections, whereas  models of stretched polymers do not impose such constraints and in this respect, resemble ``real'' Ising interfaces.

Obviously stretched polymers inherit all the pending questions which are still open for their directed counterparts; in particular  a general mathematical description of the strong disorder regime is still missing.  It is perhaps unreasonable to expect that these issues would be easier to settle in the stretched context. However, attempts to analyze the model give rise to other more amenable issues of an intrinsic interest. To start with, even the annealed model is non-trivial in the stretched case.  Furthermore, both quenched and annealed models of stretched polymers exhibit a sub-ballistic to ballistic transition in terms of the pulling force which leads to a rich morphology of the corresponding phase diagram to be explored.  Finally, models of stretched polymers do  not have a natural underlying martingale structure, which rules out an immediate application of martingale techniques which played such a prominent role in the analysis of directed polymers (see e.g.~\cite{Bolthausen, Song, CarmonaHu, CSY1} as well as the review~\cite{CometsYoshida} and references therein).  It should be noted, however, that both an adjustment of the martingale approach (as based on, e.g., \cite{McLeish} - see also  Subsection~\ref{ssec:Convergence} below)  and non-martingale methods developed in the directed context~\cite{Sinai, Vargas1, Vargas2, Lacoin} continue to be relevant tools for the stretched models as well.

In this paper we try to summarize the current state of knowledge about stretched polymers. A large deviation level investigation of the model was initiated in the continuous context by Sznitman (\cite{Sznitman-book} and references therein) and then adjusted to the discrete setup in~\cite{Zerner, Flury-LD}. The case of high temperature discrete Wiener sausage with drift was addressed in~\cite{Trachsler}. The existence of weak disorder in higher dimensions has been established first for on-axis directions in~\cite{Flury-LE} and then extended to arbitrary directions in~\cite{Zygouras-LE}.  The main input of the latter work was a proof of a certain mass-gap condition for the (conjugate - see below) annealed model at high temperatures. In fact, the mass-gap condition in question  holds for a general class of off-critical self-interacting polymers in attractive potentials at all temperatures~\cite{IoffeVelenik-Annealed}, which leads to a complete Ornstein-Zernike level analysis of 
 the off-critical annealed case. In the quenched case, such an analysis paves the way to a refined description of what we call below the very weak disorder regime~\cite{IoffeVelenik-AOP, IoffeVelenik-inprogress}, which yields a stretched counter-part of the results of~\cite{Bolthausen}. Finally, the approach of~\cite{Vargas1} and the fractional moment method of~\cite{Lacoin} were  adjusted in~\cite{Zygouras-StrongDisorder} for a study of strong disorder in low ($d=2,3$) dimensions.

The paper is organized as follows:  The rest of Section~1 is devoted to a precise mathematical definition of the model and to an explanation of the key notions. The large deviation level theory is exposed in Section~2.  Path decomposition as described in Section~3 is in the heart of our approach. It justifies an effective directed structure of stretched polymers in the ballistic regime.  In the annealed case, it leads to a complete description of off-critical ballistic  models, which is the subject of Section~4.  The remaining Section~5 (Weak disorder) and Section~6 (Strong disorder) are devoted to a description of the rather incomplete  state of knowledge for the quenched models in the ballistic regime.

\subsection{Class of Models}
\textbf{Polymers.} A polymer $\gamma = (\gamma_0, \dots, \gamma_n)$ is a nearest-neighbor trajectory on the integer lattice $\Zd$. Unless stressed otherwise, $\gamma_0$ is always placed at the origin.  The length of the polymer is $ \abs{\gamma} = n$ and its spatial extension is $X (\gamma ) = \gamma_n - \gamma_0$.  In the most general case neither the length nor the spatial extension are fixed.

\smallskip
\noindent
{\bf Random Environment.} The random environment is a collection $\lbr V(x)\rbr_{x\in\Zd}$ of non-degenerate non-negative i.i.d. random variables which are normalized by $0\in {\rm supp}(V)$.  In the sequel we shall tacitly assume that ${\rm supp}(V)$ is bounded. Limitations and extensions (e.g. $\bbE V^d <\infty $ or existence of  traps $\lbr V(x) = \infty\rbr$) will be discussed separately in each particular case.  Probabilities and expectations with respect to the environment are denoted with bold letters $\bbP$ and $\bbE$. The underlying probability space is denoted as 
$\lb \Omega , \calF , \bbP\rb$. 

\smallskip
\noindent
\textbf{Weights.} The reference measure $P (\gamma ) \df (2d)^{-\abs{\gamma}}$ is given by simple random walk weights. The most general polymer weights we are going to consider are quantified by three parameters:
\begin{itemize}
\item the inverse temperature $\beta\geq 0$;
\item the external pulling force $h\in\Rd$;
\item  the mass per step $\lambda\geq 0$.
\end{itemize}
\noindent
The random  quenched weights are given by
\be 
\label{eq:qweight}
q_{\lambda ,h }^\beta (\gamma ) \df \exp\Bigl\{  h\cdot X (\gamma ) -\lambda \abs{\gamma}
- \beta\sum_{1}^{\abs{\gamma}} V (\gamma_i )\Bigr\}  P (\gamma ).
\end{equation}
In the sequel, we shall drop the index $\beta$ from the notation, and we shall drop the indices $\lambda$ or $h$ whenever they equal zero. The corresponding deterministic annealed weights are given by
\be 
\label{eq:aweight}
a_{\lambda ,h }  (\gamma ) \df
\bbE q_{\lambda ,h }  (\gamma )  = 
\exp\lbr  h\cdot X (\gamma ) -\lambda \abs{\gamma}
- 
\Phi_\beta (\gamma ) 
\rbr P (\gamma ) , 
\end{equation}
where $\Phi_\beta (\gamma ) \df \sum_x \phi_\beta \bigl(\ell_\gamma(x) \bigr)$, with $\ell_\gamma(x)$ denoting the local time (number of visits) of $\gamma$ at $x$, and 
\begin{equation}  
\label{eq:phibeta}
\phi_\beta (\ell ) = -\log\bbE e^{-\beta \ell V} .
\end{equation}
Note that the annealed potential is attractive, in the sense that $\phi_\beta (\ell +m ) \leq \phi_\beta (\ell ) +\phi_\beta (m)$. 

\smallskip
\noindent
\textbf{Path Measures and Conjugate Ensembles.} 
There are two natural types of ensembles to be considered: Those with fixed polymer length $\abs{\gamma}= n$, and those  with fixed spatial extension $X (\gamma ) = x$ or, alternatively, with fixed $ h\cdot X(\gamma )= N$.  Accordingly, we define the quenched  partition functions by
\begin{equation}  
\label{eq:pf}
Q_\lambda  (x )  \df \sum_{X (\gamma )=x} q_\lambda (\gamma ) , \ \ 
Q_\lambda  (N)  \df \sum_{h\cdot  x  = N} Q_\lambda  (x )\ \ {\rm and}\ \ 
Q_n  (h ) \df \sum_{|\gamma | = n} q_h (\gamma ) , 
\end{equation}
and use $A_\lambda  (x) \df \bbE Q_\lambda  (x )$,  $A_\lambda  (N)$ and $A_n  (h )$ to denote their annealed counterparts. 

Of particular interest is the case of a polymer with fixed length $\abs{\gamma}=n$. We define the corresponding quenched and annealed path measures by
\begin{equation}  
\label{eq:nMeasures}
\bbQ_n^h (\gamma ) \df \1_{\lbr \abs{\gamma } = n\rbr} \frac{q_h (\gamma )}{Q_n  (h )}\ \ 
{\rm and}\ \ 
\bbA_n^h (\gamma ) \df \1_{\lbr \abs{\gamma } = n\rbr} \frac{a_h (\gamma )}{A_n  (h )} .
\end{equation}
Following~\eqref{eq:pf}, the probability distributions $\bbQ_\lambda^x , \bbA_\lambda^x  , \bbQ_\lambda^N$ and $\bbA_\lambda^N$ are defined in the obvious  way.

\subsection{Ballistic and Sub-Ballistic Phases.}
In both the quenched and the annealed setups there is a competition between the attractive potential and the pulling force $h$: For small values of $h$ the attraction wins and the polymer is sub-ballistic, whereas it becomes ballistic if $h$ is large enough. These issues were investigated on the level of Large Deviations first in the continuous context of drifted Brownian motion among random obstacles in~\cite{Sznitman-book} and then for the models we consider here in~\cite{Zerner, Flury-LD}. Such large deviation analysis, however, overlooks the detailed sample-path structure of polymers and, in particular, does not imply law of large numbers or even existence of limiting spatial extension (speed). The law of large numbers in the annealed case was established in~\cite{IoffeVelenik-Annealed} together with other more refined analytic properties of annealed 
polymer measures in the ballistic regime
.(see Definition~\ref{def:super} below).   As is explained below, in the regime of weak disorder the annealed law of large numbers implies the 
quenched law of large numbers with the same 
limiting macroscopic spatial extension. We record all these facts as follows:
\begin{theorem}
\label{thm:ballistic}
There exist compact convex sets $\Ka \subseteq  \Kq$ with non-empty interiors, 
$0\in {\rm int}\Ka$, such that:
\begin{enumerate}
 \item If $h\in{\rm int} \Ka$, \ respectively  $h\in{\rm int}\Kq$, then, for any $\epsilon>0$,
\begin{equation}  
\label{eq:ballisticIn}
\lim_{n\to\infty}\bbA_n^h\bigl( \bigl|\frac{X(\gamma )}{n} \bigr| > \epsilon\bigr) = 0, \ \text{respectively}
\ \ 
\lim_{n\to\infty}\bbQ_n^h\bigl( \bigl|\frac{X(\gamma )}{n} \bigr| > \epsilon\bigr) = 0\ \ \bbP \text{-a.s.,}
\end{equation}
exponentially fast in $n$~\cite{Zerner, Flury-LD}.
\item If $h\not\in\Ka$, then there exists $v = v^a(h ,\beta )\neq 0$, such that, for any $\epsilon>0$,
\begin{equation}  
\label{eq:ballisticOutA}
\lim_{n\to\infty}\bbA_n^h\bigl(\bigl| \frac{X(\gamma )}{n} - v\bigr| > \epsilon\bigr)  = 0,
\end{equation}
exponentially fast in $n$~\cite{Zerner, Flury-LD, IoffeVelenik-Annealed}.
\item If $h\not\in\Kq$, then there exists a compact set $0\notin \calM_h$ such that
\begin{equation}  
\label{eq:ballisticOutq}
\liminf_{n\to\infty}\bbQ_n^h\Bigl( {\rm d}\bigl( \frac{ {X(\gamma )}}{n}, 
\calM_h \bigr) \Bigr)   = 0\ \ \bbP\text{-a.s.},
\end{equation}
exponentially fast in $n$~\cite{Zerner, Flury-LD}. Furthermore, if the dimension $d\geq 4$, then for any $h\neq 0$ fixed the set $\calM_h = \lbr v^a(h ,\beta )\rbr$ as soon as $\beta $ is sufficiently small.
\end{enumerate}
\end{theorem}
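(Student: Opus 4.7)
The plan is to introduce Lyapunov-exponent norms associated with the fixed-endpoint partition functions, to identify $\Ka$ and $\Kq$ as their polar bodies, and then to reduce (1) to a large-deviations computation, (2) to a cone-point renewal decomposition in the annealed ensemble, and (3) to an $L^2$/martingale argument applied to the effective directed walk produced by that decomposition.

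By subadditivity in the annealed case and Kingman's subadditive ergodic theorem in the quenched case, for each $x\in\Zd$ and $\lambda\geq 0$ the limits
\begin{equation*}
\xi^a_\lambda(x) = -\lim_{n\to\infty}\tfrac{1}{n}\log A_\lambda(nx),\qquad \xi^q_\lambda(x) = -\lim_{n\to\infty}\tfrac{1}{n}\log Q_\lambda(nx) \ \ \bbP\text{-a.s.}
\end{equation*}
exist and extend to continuous positively homogeneous norms on $\Rd$. Jensen applied to $A_\lambda(nx) = \bbE Q_\lambda(nx)$ yields $\xi^a_\lambda \leq \xi^q_\lambda$, so the polar bodies $\Ka = \{h : h\cdot x \leq \xi^a_0(x)\ \forall x\}$ and $\Kq = \{h : h\cdot x \leq \xi^q_0(x)\ \forall x\}$ are compact convex with nonempty interiors and satisfy $0 \in \text{int}\Ka \subseteq \text{int}\Kq$. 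A Varadhan-type computation in the fixed-$n$ ensemble then shows that $X(\gamma)/n$ obeys a large deviation principle under $\bbA_n^h$, and $\bbP$-a.s.\ under $\bbQ_n^h$, with rate function equal to the $h$-tilt of the convex conjugate of the corresponding norm. For $h$ in the interior of $\Ka$ (resp.\ $\Kq$), the linear gain $h\cdot v$ is strictly dominated by $\xi^a_0(v)$ (resp.\ $\xi^q_0(v)$) in every non-zero direction $v$, so the rate function is uniquely minimized at $v=0$ and~\eqref{eq:ballisticIn} follows with exponential speed.

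For part~(2), fix $h \notin \Ka$ and work with the annealed measure $\bbA_n^h$. The off-critical mass-gap condition of~\cite{IoffeVelenik-Annealed} for the conjugate annealed model produces a positive density of cone-points along the typical polymer, together with a renewal decomposition whose between-cone increments are i.i.d.\ and exponentially tight both in spatial extension and in length. Equation~\eqref{eq:ballisticOutA} then reduces to the classical LLN with Cram\'er-type concentration for a sum of i.i.d.\ vector-valued increments, with $v^a(h,\beta)$ equal to the mean spatial increment per mean temporal increment. Establishing this cone-point decomposition, in particular the exponential tail on cone-free stretches, is the main technical step; this is the content of Sections~3--4.

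For part~(3), the quenched LDP gives $\bbP$-a.s.\ concentration on the compact set of minimizers of a non-random rate function, call it $\calM_h$, and $h\notin \Kq$ rules out $0 \in \calM_h$. The identification $\calM_h = \{v^a(h,\beta)\}$ for $d\geq 4$ and $\beta$ small requires comparing the quenched and annealed free energies in a neighborhood of the annealed drift. My plan is to apply the cone-point decomposition of the previous paragraph under the annealed law, thereby extracting an effective directed random walk in the longitudinal direction, and then run a Bolthausen-type martingale argument on the normalized quenched partition function. A second-moment computation exploiting the transverse structure of this walk shows that the martingale is uniformly integrable for $d\geq 4$ and $\beta$ small, yielding $\xi^a=\xi^q$ locally and forcing $\calM_h$ to collapse to $\{v^a(h,\beta)\}$. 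The main obstacle, and the source of the $d\geq 4$ restriction, is obtaining the transverse-fluctuation control needed to make the $L^2$ computation go through in the stretched setting, where no directed structure is a priori given.
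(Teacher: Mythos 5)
Your treatment of parts (1) and (2) is essentially the paper's own: Lyapunov norms via subadditivity (annealed) and Kingman (quenched), Jensen for $\fra_\lambda\leq\frq_\lambda$, polar bodies $\Ka,\Kq$, the LDP with rate functions $J^h_a,J^h_q$, the surcharge bound $J_a^h(v)\geq(1-\fra^*(h))\fra(v)>0$ for $h\in\mathrm{int}\,\Ka$ giving part (1), and the cone-point/Ornstein--Zernike renewal of Sections 3--4 giving analyticity and non-degeneracy of $\Lambda_a$ outside $\Ka$, hence a unique minimizer $v=\nabla\Lambda_a(h)$, for part (2). Your first sentence of part (3) is also correct: the quenched LDP yields concentration on $\calM_h=\{J_q^h=0\}$ and $h\notin\Kq$ forces $0\notin\calM_h$ (this is Lemma~\ref{lem:MhSet} in the paper), and once one knows $\fra_\lambda=\frq_\lambda$ the collapse $\calM_h=\{v^a(h,\beta)\}$ follows from the simple Markov/Borel--Cantelli transfer of Lemma~\ref{lem:weakdisorder-expdecay}.

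The gap is in the proposed route to weak disorder. You write that you will ``run a Bolthausen-type martingale argument on the normalized quenched partition function'' and invoke uniform integrability of that martingale. In the stretched setting this object is \emph{not} a martingale: the polymer may backtrack, so the environment cannot be filtered by time slices as in the directed case, and the cone-point renewal does not rescue this because the renewal times are themselves functions of the whole environment. The paper flags precisely this obstruction in the introduction (``models of stretched polymers do not have a natural underlying martingale structure''). What actually works, and what the paper and its companion works~\cite{IoffeVelenik-AOP, Flury-LE, Zygouras-LE} rely on, is either (i) a direct $L^2$/Paley--Zygmund-type comparison of $Q_\lambda(x)$ with $A_\lambda(x)$, or (ii) a Sinai-type expansion of $\tq{n}$ around its annealed counterpart combined with McLeish's maximal inequality, where the relevant filtration is the one generated by the environment on spatial half-spaces $\cHm{m}$ orthogonal to the drift and the mixingale-type bound is the estimate~\eqref{eq:WDBound}. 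Your second-moment intuition and the reason for the $d\geq 4$ restriction are both right, but you must replace the martingale-convergence step with one of these non-martingale devices (or cite the weak-disorder results directly, as the paper does) before the argument closes.
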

As described in the next subsection, $\Ka$ and $\Kq$ are support sets for certain Lyapunov exponents (norms).
\begin{definition}
\label{def:super}
The phases corresponding to $h\not\in\Ka$ and, respectively, $h\not\in\Kq$ are called ballistic.  For $* \in \lbr a,q\rbr$ the drifts $h$ are called sub-critical (respectively critical and super-critical) 
if  $h\in{\rm int}{\mathbf K}^*$ (respectively $h\in\partial {\mathbf K}^*$ and 
$h\not\in {\mathbf K}^*$).  
\end{definition}
A general (i.e., without an assumption of weak disorder) characterization of the set $\calM_h$ is given in Lemma~\ref{lem:MhSet}.  The question whether quenched models in the ballistic phase, or equivalently,   quenched models at super-critical drifts 
satisfy a law of large numbers is open with an exception of the small noise higher dimensional case (see Lemma~11 below).

\begin{remark}
The above theorem and its far reaching refinements hold for a large class of annealed models with attractive interactions.  In particular, no moment  assumptions on $V$ are needed. For instance, both the set $\Ka$ is defined and the corresponding  results  hold in the case of pure traps $V\in \lbr 0,\infty \rbr$.  
\end{remark}

The critical cases $h\in \partial \Ka$ and, of course,  $h\in\partial \Kq$ are open. It is easy to see that $\Ka\subset \Kq$ for sufficiently low temperatures. It is, however,  an open question (which depends on dimension $d=2,3$ or $d\geq 4$) whether the sets of critical drifts coincide for moderate or small values of $\beta$.

The sub-critical case $h\in {\rm int}\Ka$ has been worked out by Sznitman in the context of drifted  Brownian  motion among random obstacles~\cite{Sznitman-book}; see also~\cite{Antal} for some results in the case of random walks.

\subsection{Lyapunov Exponents.}
The quenched and annealed Lyapunov exponents are defined via
\begin{equation}  
\label{eq:LE}
\frq_\lambda (x ) \df -\lim_{N\to\infty} \frac1N \log Q_\lambda  (\lfloor Nx\rfloor  )\ \ 
{\rm and}\ \ 
\fra_\lambda (x ) \df -\lim_{N\to\infty} \frac1N \log A_\lambda  (\lfloor Nx\rfloor  ) .
\end{equation}
\begin{theorem}
Both $\frq_\lambda$ and $\fra_\lambda$ are defined for all $\lambda\geq 0$. Moreover, for every $\lambda\geq 0$, $\frq_\lambda\geq \fra_\lambda$ and both are equivalent norms on $\bbR^d$ : there exist $c_\lambda^1, c_\lambda^2 \in (0, \infty )$ such that
\begin{equation}  
\label{eq:LEproperties}
c_\lambda^1 |x| \leq \fra_\lambda (x) \leq \frq_\lambda (x) \leq c_\lambda^2 |x| .
\end{equation}
In particular, $\frq_\lambda$ and $\fra_\lambda$ are support functions,
\[
 \frq_\lambda (x) = \max_{h\in\partial \Kql{\lambda}}h\cdot x\quad{\rm and}\quad
 \fra_\lambda (x) = \max_{h\in\partial \Kal{\lambda}} h\cdot x .
\]
 of compact convex sets $\Kal{\lambda}\subseteq \Kql{\lambda}$ 
with non-empty interior containing $0$. 
\end{theorem}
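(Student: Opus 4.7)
My plan is to prove the four assertions in sequence: existence of the limits, the ordering $\fra_\lambda\le\frq_\lambda$, the norm-equivalence bounds, and the support-function representation.

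\textbf{Existence.} The starting point is path concatenation. Given paths $\gamma^1,\gamma^2$ with $X(\gamma^1)=x$, $X(\gamma^2)=y$, the concatenation $\gamma^1\cdot\gamma^2$ (obtained by appending the $x$-translate of $\gamma^2$ to $\gamma^1$) has length $|\gamma^1|+|\gamma^2|$ and, directly from \eqref{eq:qweight}, satisfies
\[
q_\lambda(\gamma^1\cdot\gamma^2,\omega)=q_\lambda(\gamma^1,\omega)\,q_\lambda(\gamma^2,\theta_x\omega),
\]
where $\theta_x$ is the spatial shift of the environment. The attractivity $\phi_\beta(\ell+m)\le\phi_\beta(\ell)+\phi_\beta(m)$ upgrades this to $a_\lambda(\gamma^1\cdot\gamma^2)\ge a_\lambda(\gamma^1)\,a_\lambda(\gamma^2)$. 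Summing over $\gamma^1,\gamma^2$,
\[
Q_\lambda(x+y,\omega)\ge Q_\lambda(x,\omega)\,Q_\lambda(y,\theta_x\omega),\qquad A_\lambda(x+y)\ge A_\lambda(x)\,A_\lambda(y).
\]
Fekete's lemma then produces $\fra_\lambda(x)$ on $\Zd$; Kingman's sub-additive ergodic theorem applied to the cocycle $-\log Q_\lambda((n-m)x,\theta_{mx}\omega)$, combined with ergodicity of the i.i.d.\ field $(V(y))$ under every non-trivial lattice shift, produces $\frq_\lambda(x)$ a.s.\ and deterministic. Positive rational homogeneity together with the upper bound below extends both to continuous, sub-additive, positively homogeneous functions on $\Rd$.

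\textbf{Ordering and upper bound.} Jensen applied to $-\log$ gives $-\bbE\log Q_\lambda(Nx)\ge -\log A_\lambda(Nx)$; dividing by $N$ and using $L^1$ convergence from Kingman yields $\frq_\lambda(x)\ge\fra_\lambda(x)$. For the upper bound $\frq_\lambda(x)\le c_\lambda^2|x|$, restricting $Q_\lambda(\lfloor Nx\rfloor,\omega)$ to one fixed lattice geodesic from $0$ to $\lfloor Nx\rfloor$ (length $\le C|x|N$) and using boundedness of $\supp V$ produces the deterministic lower bound $Q_\lambda(\lfloor Nx\rfloor,\omega)\ge e^{-C'|x|N}$, whence the same bound for $A_\lambda$ and the desired inequality for both exponents.

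\textbf{Lower bound (main obstacle).} The lower bound $\fra_\lambda(x)\ge c_\lambda^1|x|$ is where the work lies. By non-degeneracy of $V$, $\phi_\beta(1)>0$, and any lattice path $\gamma$ from $0$ to $\lfloor Nx\rfloor$ visits at least $|\lfloor Nx\rfloor|_1+1\gtrsim N|x|$ distinct sites, so $\Phi_\beta(\gamma)\ge\phi_\beta(1)(|\lfloor Nx\rfloor|_1+1)$; this already suffices for paths of length $O(N|x|)$. The delicate point is ruling out a contribution from much longer paths: the entropy factor $(2d)^{-|\gamma|}$ (reinforced by $e^{-\lambda|\gamma|}$ when $\lambda>0$) must win against the combinatorial abundance of long excursive walks. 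In $d\ge 3$ this is comparatively easy since the walk is transient and $R(\gamma)$ grows linearly in $|\gamma|$; in $d=2$ (with $\lambda=0$) one needs a quantitative lower bound on the range of long paths to bootstrap $\Phi_\beta(\gamma)\ge\phi_\beta(1)\,R(\gamma)$ into an effective exponential killing in the length, as in \cite{Zerner,Flury-LD,IoffeVelenik-Annealed}. I expect this to be the technical heart of the argument.

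\textbf{Support function representation.} Once $\fra_\lambda,\frq_\lambda$ are established as norms equivalent to $|\cdot|$, the rest is convex geometry. The closed unit balls $B_\lambda^a=\{\fra_\lambda\le 1\}\supseteq B_\lambda^q=\{\frq_\lambda\le 1\}$ are compact convex symmetric neighborhoods of $0$. Defining $\Kal{\lambda}=(B_\lambda^a)^\circ$ and $\Kql{\lambda}=(B_\lambda^q)^\circ$ as polar duals, the bipolar theorem yields $\fra_\lambda(x)=\max_{h\in\Kal{\lambda}}h\cdot x$ and $\frq_\lambda(x)=\max_{h\in\Kql{\lambda}}h\cdot x$, with the maxima attained on $\partial\Kal{\lambda}$ and $\partial\Kql{\lambda}$ respectively. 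The inclusion $\Kal{\lambda}\subseteq\Kql{\lambda}$ is obtained by reversing polarity in $B_\lambda^q\subseteq B_\lambda^a$, and the compactness plus nonempty interior containing $0$ follow at once from the two-sided norm bounds.
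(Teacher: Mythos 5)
Your reconstruction matches the route the paper itself indicates (in the remark after the theorem and inside the proof of Lemma~\ref{lem:series}): superadditivity of $A_\lambda$ via attractivity of $\phi_\beta$, Kingman's subadditive ergodic theorem for the (deterministic) quenched exponent, Jensen for $\frq_\lambda\geq\fra_\lambda$, the single-geodesic comparison for the upper bound, and convex duality for the support-function representation; the paper gives no fuller argument than this and delegates the details to \cite{Zerner,Flury-LD,KMZ,IoffeVelenik-Annealed}.

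Two small remarks. First, for $\lambda>0$ your lower bound paragraph makes the problem sound harder than it is: dropping the potential entirely, $A_\lambda(x)\leq\sum_{n\geq|x|_1}e^{-\lambda n}P(S_n=x)\leq C_\lambda e^{-\lambda|x|_1}$, so $\fra_\lambda(x)\geq\lambda|x|_1$ with no entropy-versus-potential balancing needed; the range/Donsker--Varadhan argument you correctly flag as the technical heart is required only at $\lambda=0$ (and in $d=2$ in particular). Second, you should make explicit that Kingman's integrability hypothesis $\bbE\bigl(-\log Q_\lambda(x)\bigr)^+<\infty$ is precisely where the bounded-support (or $\bbE V<\infty$) assumption enters — it is the reason the paper flags a moment condition for the quenched exponent but none for the annealed one.
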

\begin{remark}
The annealed Lyapunov exponent is always defined.  The proof of the existence of the (non-random) quenched Lyapunov exponent in~\cite{Zerner} is based on sub-additive ergodic theorem and requires an $\bbE V^d <\infty$ assumption. 
This was relaxed  to $\bbE V <\infty$ in \cite{KMZ}. 
However, no moment assumptions (apart from $\bbP\lb V =\infty \rb$ being small) are needed to justify existence of quenched Lyapunov exponents in the very  weak disorder case in higher dimensions~\cite{IoffeVelenik-AOP}.
\end{remark}
The sets $\Kal{\lambda}$ and  $ \Kql{\lambda}$ can be described equivalently as the unit balls for the polar norms
\[
\frq^*_\lambda (h) = \max_{x\neq 0}\frac{ h\cdot x}{\frq_\lambda (x)}\ \ 
{\rm and, accordingly,}\ \ 
\fra^*_\lambda (h) = \max_{x\neq 0}\frac{ h\cdot x}{\fra_\lambda (x)} .
\]
The set $\Ka$, respectively $\Kq$, in Theorem~\ref{thm:ballistic} is given by $\Kal{0}$, respectively $\Kql{0}$.

\subsection{Very Weak, Weak and Strong Disorder.}
\label{ssec:weakstrong}
Given $\lambda \geq 0$ and $\beta\geq 0$,  we say that the disorder is weak if $\fra_\lambda = \frq_\lambda$ and strong otherwise.  
Note that this definition is slightly different from the one employed in the directed 
case \cite{CometsYoshida}. 
The condition of being \emph{very weak} is of a technical nature. It means that the dimension is $d\geq 4$ and that, given either a fixed value of $h\neq 0$ or of $\lambda >0$, the inverse temperature $\beta$ is sufficiently small.  More precisely, we need a validity of~\eqref{eq:WDBound} below, which enables a fruitful  $L_2$-type control of partition functions and related quantities. In particular, the disorder is weak if it is very weak~\cite{Flury-LE, Zygouras-LE, IoffeVelenik-AOP} and, furthermore, in the regime of very weak disorder, both a $\bbP$-a.s. LLN and a $\bbP$-a.s. CLT hold for the limiting macroscopic spatial extension~\cite{IoffeVelenik-AOP, IoffeVelenik-inprogress}.
As we explain in Subsection~\ref{ssec:LLN}, the LLN is inherited by quenched models in the weak disorder regime. However, contrary to the directed case~\cite{CometsCLT}, it is not known whether CLT holds whenever the ratio between quenched and annealed 
partition functions stays bounded away from zero. 
Furthermore, it is not known whether, in $d\geq 4$, the disorder is weak for all $\lambda >0$ as soon as $\beta$ is small. In particular, proving that $\Ka = \Kq$ for small $\beta$ remains an open problem. 

Under mild assumptions on the potential $V$ ($\setof{x}{V(x)=0}$ does not percolate and $\lim_{\beta\to\infty} \log\bbE e^{-\beta V}/\beta =0$), it  is easy to see~\cite{Zygouras-StrongDisorder} that, for a given $\lambda$, the disorder is strong as soon as $\beta$ is large enough. Such a result is well-known even in the original context of the Ising model with random interactions~\cite{Wouts}. It was recently proved~\cite{Zygouras-StrongDisorder}  that in $d=2,3$ the disorder is strong for any $\lambda > 0$ and $\beta >0$; a short proof of the case $d=2$ is given in Section~\ref{sec:strong}.  

Furthermore the approach of Vargas~\cite{Vargas1} was adjusted in~\cite{Zygouras-StrongDisorder} in order to show that in the regime of strong disorder quenched conjugate measures necessarily contain macroscopic atoms.

\section{Large Deviations}
\label{sec:LD}
The following result holds under the presumably technical assumption that $\bbE V^d <\infty$ in the quenched case, but in full generality in the annealed case.
\begin{theorem}
\label{thm:LD}
For any $h\in \bbR^d$, the rescaled spatial  polymer extension $X(\gamma )/n $ satisfies large deviation principles (with speed $n$) under both $\bbA_n^h$ and, $\bbP$-a.s., under $\bbQ_n^h$ with the corresponding (non-random) rate functions $J_a^h$ and $J_q^h$ given by
\begin{equation}  
\label{eq:LDFunctions}
\begin{split}
J_a^h (v) &= \max_{\lambda}\lbr \fra_\lambda (v) - \lambda\rbr +\lb \Lambda_a (h) - h\cdot v\rb, \\
J_q^h (v) &= \max_{\lambda}\lbr \frq_\lambda (v) - \lambda\rbr 
+\lb \Lambda_q (h) - h\cdot v\rb,
\end{split}
\end{equation}
where $\Lambda_a(h) \df \lim_{n\to\infty}\frac1n\log A_n (f)$ and $\Lambda_q(h) \df \lim_{n\to\infty}\frac1n\log Q_n (f)$.
\end{theorem}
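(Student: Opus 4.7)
My plan is to combine the existence of the Lyapunov norms $\fra_\lambda, \frq_\lambda$ (from the previous theorem) with the existence of the free energies $\Lambda_a, \Lambda_q$ via a Legendre/Laplace duality between the mass per step $\lambda$ and the pulling force $h$. Existence and convexity of $\Lambda_a(h) = \lim_n \frac{1}{n} \log A_n(h)$ follow from subadditivity of $n \mapsto -\log A_n(h)$ by trajectory concatenation (using attractiveness of $\Phi_\beta$), while existence of $\Lambda_q(h)$ as a $\bbP$-a.s.\ nonrandom convex limit follows from Kingman's subadditive ergodic theorem under $\bbE V^d < \infty$.

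The crucial duality step is to sum the joint generating function in two ways,
\begin{equation*}
\sum_{n \geq 0} e^{-\lambda n} A_n(h) = \sum_\gamma e^{h \cdot X(\gamma) - \lambda |\gamma|} a(\gamma) = \sum_x e^{h \cdot x} A_\lambda(x).
\end{equation*}
The left-hand series converges iff $\lambda > \Lambda_a(h)$, while the right-hand series converges iff $h \cdot x < \fra_\lambda(x)$ for all large $|x|$, equivalently $h \in \mathrm{int}\, \Kal{\lambda}$ via the support function characterization from the previous theorem. Matching the two convergence criteria gives
\begin{equation*}
\Lambda_a(h) = \inf\{\lambda \geq 0 : h \in \Kal{\lambda}\},
\end{equation*}
and analogously, $\bbP$-a.s., $\Lambda_q(h) = \inf\{\lambda \geq 0 : h \in \Kql{\lambda}\}$.

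Next I would apply the G\"artner--Ellis theorem. For any $\mu \in \bbR^d$,
\begin{equation*}
\frac{1}{n}\log \sum_\gamma e^{\mu \cdot X(\gamma)}\, \bbA_n^h(\gamma) = \frac{1}{n}\log \frac{A_n(h+\mu)}{A_n(h)} \longrightarrow \Lambda_a(h+\mu) - \Lambda_a(h),
\end{equation*}
so $X(\gamma)/n$ satisfies an LDP under $\bbA_n^h$ with rate $J_a^h(v) = \Lambda_a(h) - h \cdot v + \Lambda_a^*(v)$, where $\Lambda_a^*$ is the convex conjugate of $\Lambda_a$. Combining with the duality of the previous step,
\begin{equation*}
\Lambda_a^*(v) = \sup_h \{h \cdot v - \Lambda_a(h)\} = \sup_{\lambda \geq 0}\Bigl\{\sup_{h \in \Kal{\lambda}} h \cdot v - \lambda\Bigr\} = \sup_{\lambda \geq 0}\{\fra_\lambda(v) - \lambda\},
\end{equation*}
matching the stated form of $J_a^h$. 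The quenched case runs identically, $\bbP$-a.s., with $\Lambda_q, \frq_\lambda, \Kql{\lambda}, \bbQ_n^h$ replacing their annealed counterparts.

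The main obstacle is the essential smoothness (or differentiability) of $\Lambda_*$ required for G\"artner--Ellis to yield a full LDP rather than only a weak LDP on exposed points. In the annealed case this follows from the Ornstein--Zernike renewal structure of Section~4 in the ballistic regime; the subcritical regime can be handled either by exposed-point arguments or by a direct Laplace-asymptotic analysis of the identity $A_\lambda(\lfloor nv\rfloor) = \sum_m e^{-\lambda m} A_{m,\lfloor nv\rfloor}$, where $A_{m,x} = \sum_{|\gamma|=m,\, X(\gamma)=x} a(\gamma)$. For the quenched case the same direct route must be carried out $\bbP$-a.s., which is precisely where the $\bbE V^d < \infty$ hypothesis enters: it provides sufficient control on the subadditive process to interchange limits uniformly in the length $m$.
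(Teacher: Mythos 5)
Your proposal follows essentially the same route as the paper: the identity $e^{-\lambda n}A_n(h)=\sum_x e^{h\cdot x}A_{\lambda,n}(x)$ combined with the series-convergence characterization of $\Kal{\lambda}$ and $\Kql{\lambda}$ (the paper's Lemma~\ref{lem:series}) yields $\Lambda_a(h)=\inf\setof{\lambda\geq 0}{h\in\Kal{\lambda}}$ exactly as in~\eqref{eq:aMGF}--\eqref{eq:qMGF}, after which exponential tightness, Legendre duality and the support-function relation $\fra_\lambda(v)=\max_{f\in\partial\Kal{\lambda}}f\cdot v$ identify the rate functions in~\eqref{eq:LDFunctions}. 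Your added discussion of essential smoothness for G\"artner--Ellis is a refinement of a step the paper's sketch simply asserts (it passes from exponential tightness and the limit of the log-moment generating functions directly to the LDP), not a genuinely different method.
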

Let us explain Theorem~\ref{thm:LD}:  The following lemma shows that the sets $\Kal{\lambda}$ and $\Kql{\lambda}$ can be characterized as domains of convergence of certain power series.
\begin{lemma}
\label{lem:series}
\begin{enumerate}
\item For every $\lambda\geq 0$, if $h\in {\rm int}\Kal{\lambda}$ or, equivalently, 
if $\fra^*_\lambda (h) <1$, then
\begin{equation}  
\label{eq:hin}
\sum_x e^{h\cdot x} A_\lambda (x ) <\infty .
\end{equation}
\item For every $\lambda > 0$, 
if  $h\not \in \Kal{\lambda}$ or, equivalently, 
if $\fra^*_\lambda (h) >1$, then there exists $\alpha =   \alpha (h ) >0$ such that, all $n$ sufficiently large one can find $y = y_n$ satisfying:
\begin{equation}
\label{eq:hout}
e^{h\cdot y} A_{\lambda ,n} (y )  
\df
\sum_{X (\gamma ) = y, \abs{\gamma}= n} 
e^{h\cdot y}
a_\lambda (\gamma )
\geq e^{\alpha n} .
\end{equation}
\end{enumerate}
A completely analogous statement holds $\bbP$-a.s.\ in the quenched case.
\end{lemma}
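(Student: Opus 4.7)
The two parts are linked by the identity
\[
\sum_x e^{h\cdot x} A_\lambda(x) = \sum_m e^{-\lambda m} A_m(h),
\]
and the plan is to control each side using super-multiplicativity coming from polymer concatenation together with the sub-additivity of $\phi_\beta$. Given polymers $\gamma_1,\gamma_2$ with $X(\gamma_i)=x_i$, the concatenation $\gamma := \gamma_1\cdot(\gamma_2+x_1)$ satisfies $P(\gamma)=P(\gamma_1)P(\gamma_2)$, and since local times add, the inequality $\phi_\beta(\ell+m)\le\phi_\beta(\ell)+\phi_\beta(m)$ yields $\Phi_\beta(\gamma)\le\Phi_\beta(\gamma_1)+\Phi_\beta(\gamma_2)$, hence $a_\lambda(\gamma)\ge a_\lambda(\gamma_1)a_\lambda(\gamma_2)$. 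Summing over the two factors gives both $A_\lambda(x+y)\ge A_\lambda(x)A_\lambda(y)$ and, at fixed lengths, $A_{n+m}(h)\ge A_n(h)A_m(h)$.

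For \textbf{Part (1)}, the ray-wise Fekete inequality $\fra_\lambda(x)\le -\log A_\lambda(x)$ gives the uniform bound $A_\lambda(x)\le e^{-\fra_\lambda(x)}$. Under $\fra_\lambda^*(h)<1$ there is $\delta>0$ with $h\cdot x\le(1-\delta)\fra_\lambda(x)$ for every $x$, and combined with $\fra_\lambda(x)\ge c_\lambda^1|x|$ from~\eqref{eq:LEproperties} we conclude $\sum_x e^{h\cdot x}A_\lambda(x)\le \sum_x e^{-\delta c_\lambda^1|x|}<\infty$.

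For \textbf{Part (2)}, pick $x^*\in\Zd$ with $\Delta := h\cdot x^* - \fra_\lambda(x^*)>0$; by definition of $\fra_\lambda$, $e^{h\cdot y_N}A_\lambda(y_N)\ge e^{\Delta N - \smo{N}}$ with $y_N := Nx^*$. Expand $A_\lambda(y_N) = \sum_m e^{-\lambda m} A_{0,m}(y_N)$ with $A_{0,m}(y_N)\le 1$; since $\lambda>0$, the tail $m>CN$ (for a suitable $C=C(h,x^*,\lambda)$) contributes at most $e^{\Delta N}/2$. Using $e^{h\cdot y_N}A_{0,m}(y_N)\le A_m(h)$, we obtain $\sum_{m\le CN}e^{-\lambda m}A_m(h)\ge \tfrac12 e^{\Delta N}$, and a pigeonhole argument produces $m_N\le CN$ with $A_{m_N}(h)\ge e^{(\lambda+\Delta')m_N}$ for a fixed $\Delta'>0$ and all $N$ large. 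Since $A_{m_N}(h)$ stays bounded as long as $m_N$ is bounded, necessarily $m_N\to\infty$, hence $\Lambda_a(h)\ge\lambda+\Delta'>\lambda$. Fekete applied to the super-additive sequence $(\log A_n(h))_n$ gives $A_n(h)\ge e^{n(\Lambda_a(h)-\epsilon)}$ for any $\epsilon>0$ and $n$ large. Finally $A_n(h)=\sum_{|y|\le n}e^{h\cdot y}A_{0,n}(y)$ is a sum of at most $O(n^d)$ terms, so some $y_n$ satisfies $e^{h\cdot y_n}A_{0,n}(y_n)\ge A_n(h)/(C'n^d)$; multiplying by $e^{-\lambda n}$ gives $e^{h\cdot y_n}A_{\lambda,n}(y_n)\ge e^{\alpha n}$ for any $\alpha\in(0,\Lambda_a(h)-\lambda)$ and all $n$ large.

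The \textbf{quenched case} follows the same scheme with Kingman's sub-additive ergodic theorem replacing Fekete, applied to the shift-cocycle inequality $Q_\lambda^\omega(x+y)\ge Q_\lambda^\omega(x)\,Q_\lambda^{\theta_x\omega}(y)$ (this is where the moment condition on $V$ alluded to after the Lyapunov theorem enters); it produces the $\bbP$-a.s.\ counterparts $Q_\lambda^\omega(x)\le e^{-\frq_\lambda(x)+\smo{|x|}}$ and $Q_n(h)\ge e^{n\Lambda_q(h)+\smo{n}}$, after which the argument transfers verbatim. The main obstacle is the pigeonhole step in Part~(2): upgrading divergence of the \emph{sum} $\sum_x e^{h\cdot x}A_\lambda(x)$ to exponential growth of a \emph{single term} $e^{h\cdot y_n}A_{\lambda,n}(y_n)$ requires a quantitative truncation of the effective range of $m$, which is precisely where the hypothesis $\lambda>0$ is used.
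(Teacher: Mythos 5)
Your annealed argument is correct. Part (1) is the paper's proof verbatim: attractiveness gives super\-/multiplicativity of $A_\lambda$, hence $A_\lambda(x)\le e^{-\fra_\lambda(x)}$, and summability follows from $\fra_\lambda^*(h)<1$ together with \eqref{eq:LEproperties}. For part (2) you take a genuinely different (and legitimate) route: the paper locates a single good block, i.e.\ an $n_0$ and $y_0$ with $e^{h\cdot y_0}A_{\lambda,n_0}(y_0)\ge e^{2\alpha n_0}$ --- using, exactly as you do, that for $\lambda>0$ only lengths $m\le c_\lambda |y|$ matter --- and then produces $y_n$ for every large $n$ by writing $n=kn_0+r$ and iterating the super\-/multiplicativity of the fixed-length point-to-point partition functions; you instead pass through the free energy, proving $\Lambda_a(h)\ge\lambda+\Delta'$ by truncation and pigeonhole in $m$, then invoking Fekete for the super\-/multiplicative sequence $A_n(h)$ and pigeonholing over the $O(n^d)$ admissible endpoints. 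Both arguments rest on the same two ingredients ($\lambda>0$ to truncate lengths, attractiveness for super\-/multiplicativity); yours trades the explicit block iteration for a counting argument, at no real cost.

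The gap is in the quenched case, where ``the argument transfers verbatim'' conceals precisely the point the paper flags. First, for the quenched \eqref{eq:hin} you need an upper bound $Q_\lambda(x)\le e^{-\frq_\lambda(x)(1-\epsilon)}$ holding $\bbP$-a.s.\ \emph{simultaneously for all but finitely many} $x\in\Zd$. Kingman's theorem gives, for each fixed direction, a.s.\ convergence along that single ray; and, unlike the annealed case where super\-/multiplicativity yields the deterministic bound $A_\lambda(x)\le e^{-\fra_\lambda(x)}$, in the quenched case super\-/additivity only controls $\bbE\log Q_\lambda(x)$ and says nothing about the upward fluctuations of the random variable $\log Q_\lambda(x)$ itself. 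Making the bound uniform in $x$ requires concentration inequalities for $\log Q_\lambda(x)$ around its mean plus Borel--Cantelli --- this is Zerner's argument, and it is here (not in the mere existence of $\frq_\lambda$ via the subadditive ergodic theorem, which only needs $\bbE V<\infty$) that the assumption $\bbE V^d<\infty$ enters; compare \eqref{eq:QxConc} of Lemma~\ref{lem:QxConc} for the $\lambda>0$ case. Second, your route to the quenched \eqref{eq:hout} through ``$Q_n(h)\ge e^{n\Lambda_q(h)+o(n)}$ by Kingman'' does not stand as written: for fixed $\omega$ the sequence $\log Q_n(h)$ is not super\-/additive, and the natural concatenation inequality shifts the environment by a path-dependent endpoint, so neither Fekete nor Kingman applies to it directly; moreover, in the paper's logic the identification \eqref{eq:qMGF} of $\Lambda_q$ is a \emph{consequence} of the quenched version of this lemma, so taking it as input is circular. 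The correct repair is again via concentration: one controls the speed of convergence of $\bbE\log Q_\lambda$ in \eqref{eq:QLEbound} exactly as in the annealed case, and then uses the concentration estimates to transfer from expectations to the random partition functions before running the block/iteration step.
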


We sketch the proof of Lemma~\ref{lem:series} at the end of the section. For the moment, let us assume its validity. For any $f\in\bbR^d$,
\[
e^{-\lambda n}A_n (f) = \sum_x e^{f\cdot x}A_{\lambda ,n} (x).
\]
Hence, by~\eqref{eq:hin}, 
$
\limsup_{n\to\infty}\frac1n \log A_n (f) \leq \lambda
$ 
whenever $f\in {\rm int}\Kal{\lambda}$, whereas~\eqref{eq:hout} implies that
$
\liminf_{n\to\infty}\frac1n\log A_n (f) \geq \lambda
$ 
for any $f\not\in\Kal{\lambda}$.  It is easy to see that  the strict inclusion
$\Kal{\lambda}\subset \Kal{\lambda^\prime}$ holds for any 
$ 0\leq \lambda <\lambda^\prime$.  
Furthermore, 
\[
\Kal{\lambda} = \bigcap_{\lambda^\prime >\lambda }\Kal{\lambda^\prime} 
= \overline{ \bigcup_{\lambda^\prime <\lambda }\Kal{\lambda^\prime}}.
\]
Finally, 
since $\lim_{\ell\to\infty}\phi_\beta (\ell )/\ell = 0$,  it is always the case that
\[
 \liminf_{n\to\infty}\frac1n \log A_n (f) \geq 0.
\]
Putting all these observations together, we deduce that, for any $f\in\bbR^d$,
\begin{equation}  
\label{eq:aMGF}
\Lambda_a (f) = \lim_{n\to\infty}\frac1n\log A_n (f) = 
\begin{cases}
\lambda , \ \  &{\rm if}\   f\in \partial \Kal{\lambda},\\
0, &{\rm if}\ f\in \Ka .
\end{cases}
\end{equation}
Similarly, since $0\in {\rm supp}(V)$, 
\begin{equation}  
\label{eq:qMGF}
\Lambda_q (f) = \lim_{n\to\infty}\frac1n\log Q_n (f) =
\begin{cases}
\lambda , \ \  &{\rm if}\   f\in \partial \Kql{\lambda},\\
0, &{\rm if}\ f\in \Kq .
\end{cases}
\end{equation}
Obviously, the distribution of $X(\gamma  )/n$ is exponentially tight under both $\bbA_n^h$ and $\bbQ_n^h$. It follows that the annealed large deviation principle is satisfied with the rate function
\[
\sup_f\lbr f\cdot v - \Lambda_a (h +f )\rbr +\Lambda_a (h ) = 
\sup_f\lbr f\cdot v - \Lambda_a ( f )\rbr + \lb \Lambda_a (h ) -  h\cdot  v\rb .
\]
The latter is easily seen to coincide with $J_a$ in~\eqref{eq:LDFunctions}, using~\eqref{eq:aMGF} and $\fra_{\lambda}(v ) = \max_{f\in\partial\Kal{\lambda}}v\cdot f$. 
The quenched case is dealt with in the same way.\qed

\subsection{Ramifications  for Ballistic Behaviour.} 
The assertion of Theorem~\ref{thm:ballistic} is now straightforward. Set $\fra\equiv \fra_0$ and $\frq\equiv \frq_0$.

\noindent
(1) Since $\fra^* (h) \fra (v) \geq h\cdot v$, we infer that, for any $v\neq 0$,
\begin{equation} 
\label{eq:subspeed}
J_a^h(v)\geq (1-\fra^* (h) )\fra (v ) > c(h) \abs{v} >0 , 
\end{equation}
whenever $h\in {\rm int}\Ka$. The same argument also applies in the quenched case.

Note that formula~\eqref{eq:LDFunctions} readily implies that $J_a^h( 0) =\Lambda_a (h) $, respectively $J_q^h ( 0) = \Lambda_q (h)$. In particular, $J_a^h (0) = 0$ (respectively $J_a^h (0) = 0$) whenever $h\in\partial\Ka$ (respectively $h\in\partial\Kq$).

\noindent
On the other hand, in the ballistic case of super-critical drifts $h\in \partial\Kal{\lambda}$ or, respectively, $h\in \partial\Kql{\lambda}$, for some $\lambda >0$, the value of the corresponding rate functions at zero is strictly positive (and is equal to $\lambda$).

\noindent
(2) As we shall explain in more details in Subsection~\ref{sub:Geometry} and in Subsection~\ref{ssec:OZ}, in the annealed case the control is complete: Outside $\Ka$ the function $\Lambda_a (\cdot )$ is locally analytic and ${\rm Hess}[\Lambda_a ]$ is non-degenerate. Consequently there is a unique minimum $v = \nabla\Lambda_a (h)$ of $J_a^h$  for any super-critical $h\not\in\Ka$. 

\noindent
(3) Following Flury~\cite{Flury-ECP}, zeroes of the quenched rate function can be described as follows.
\begin{lemma}
\label{lem:MhSet}
Let $\mu > 0$ and  $h\in \partial \Kql{\mu}$. Then the set $\calM_h \df \setof{v}{J_q^h (v) = 0}$ in~\eqref{eq:ballisticOutq} can be characterized as follows:
\begin{equation}  
\label{eq:MhSet}
v\in\calM_h\Longleftrightarrow 
\begin{cases}
\frq_\mu (v) = h\cdot v ,\\
\frac{\dd^-}{\dd \lambda}\big|_{\lambda =\mu} \frq_\lambda (v) 
\geq 1\geq \frac{\dd^+ }{\dd \lambda}\big|_{\lambda =\mu} \frq_\lambda (v) .
\end{cases}
\end{equation}
In particular, $\calM_h =\lbr v\rbr$ is a singleton if and only if $\partial \Kql{\mu}$ is smooth at $h$ and $\frq_\lambda (v )$ is smooth at $\mu$. 
\end{lemma}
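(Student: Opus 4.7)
The plan is to read off $\calM_h=\{v:J_q^h(v)=0\}$ directly from the representation \eqref{eq:LDFunctions}. Since $h\in\partial\Kql{\mu}$, \eqref{eq:qMGF} gives $\Lambda_q(h)=\mu$, so
\[
J_q^h(v)=\max_{\lambda\geq 0}\bigl\{\frq_\lambda(v)-\lambda\bigr\}+\mu-h\cdot v.
\]
Because $\frq_\mu$ is the support function of $\Kql{\mu}$ and $h\in\Kql{\mu}$, one has $\frq_\mu(v)\geq h\cdot v$ for every $v$; plugging $\lambda=\mu$ into the maximum already shows $J_q^h\geq 0$. Moreover, $J_q^h(v)=0$ is equivalent to the conjunction of (a) $\frq_\mu(v)=h\cdot v$ (the first line of \eqref{eq:MhSet}) and (b) $\lambda=\mu$ attaining the maximum of $\lambda\mapsto\frq_\lambda(v)-\lambda$. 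The task therefore reduces to recasting (b) in terms of one-sided derivatives in $\lambda$.

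For this I would invoke concavity of $\lambda\mapsto\frq_\lambda(v)$. Writing $Q_\lambda(x)=\sum_n e^{-\lambda n}q(n,x)$ with nonnegative coefficients $q(n,x)$ depending only on $\beta$ and $V$ exhibits it as a Laplace transform in $-\lambda$, so $\log Q_\lambda(x)$ is convex in $\lambda$; taking minus-log and passing to the limit in \eqref{eq:LE} preserves concavity. Setting $g(\lambda):=\frq_\lambda(v)-\lambda$, which is concave on $[0,\infty)$, condition (b) is equivalent to $0\in[g'_+(\mu),g'_-(\mu)]$, which in terms of $\frq_\cdot(v)$ is exactly the second line of \eqref{eq:MhSet}.

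For the singleton claim I would exploit the $1$-homogeneity of the norm $\frq_\lambda$ in $v$. Condition (a) states that $v$ lies in the normal cone $N_h$ to $\Kql{\mu}$ at $h$, which is a convex cone. Smoothness of $\partial\Kql{\mu}$ at $h$ reduces $N_h$ to a single ray $\{tv_0:t\geq 0\}$. Along this ray, homogeneity gives $\frq_\lambda(tv_0)=t\frq_\lambda(v_0)$, so condition (b) rescales to $1\in t\cdot[g'_+(\mu),g'_-(\mu)]$ with $g(\lambda):=\frq_\lambda(v_0)$; if additionally $\frq_\lambda(v_0)$ is differentiable at $\mu$ with derivative $s>0$, this forces the unique value $t=1/s$, yielding $\calM_h=\{v_0/s\}$. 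Conversely, if either smoothness fails, the same homogeneity/ray argument produces a nontrivial one-parameter family (across non-unique normal directions or along the ray through $v$) inside $\calM_h$, so $|\calM_h|>1$.

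The main technical point is the passage from convexity in $\lambda$ of each finite-volume quantity $\log Q_\lambda(\lfloor Nv\rfloor)$ to concavity of the limit $\frq_\lambda(v)$; existence of the pointwise limit is provided by the results on the quenched Lyapunov exponent quoted before Lemma~\ref{lem:MhSet}, and pointwise limits of concave functions are concave, so this step is routine once existence is granted. The fact that the maximum in $\lambda$ is attained at the interior point $\mu>0$ (and not at the boundary $\lambda=0$) is built into the hypothesis $\mu>0$ in combination with concavity of $g$ and the supergradient condition identified above.
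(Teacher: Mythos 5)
Your proof is correct and follows essentially the same route as the paper: express $J_q^h(v)$ via \eqref{eq:LDFunctions} with $\Lambda_q(h)=\mu$, use the support-function inequality $\frq_\mu(v)\ge h\cdot v$ to identify the first condition, and translate the optimality of $\lambda=\mu$ into the one-sided derivative bounds via concavity of $\lambda\mapsto\frq_\lambda(v)$. You additionally supply a concavity justification (the Laplace-transform/log-convexity argument) and a proof of the singleton characterization, both of which the paper only invokes or asserts without detail.
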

\begin{proof}
By~\eqref{eq:LDFunctions}, 
\[
v\in\calM_h\Longleftrightarrow  
\max_{\lambda} \lbr \frq_\lambda - \lambda\rbr +\lb \mu - h\cdot v\rb = 0.
\]
The choice $\lambda =\mu$ implies that $\frq_\mu (v) \leq h\cdot v$. Since $h\in\partial
\Kql{\mu}$, the first condition in the rhs of~\eqref{eq:MhSet} follows. Consequently, for any $\lambda$, 
\[
\frq_\lambda (v ) - \frq_\mu (v) \leq \lambda -\mu .
\]
Since $\frq_\lambda$ is concave in $\lambda$, both right- and left-derivatives are defined and the second condition in the rhs of~\eqref{eq:MhSet} follows as well.
\qed\end{proof}

As will be explained in   Subsection~\ref{sec:weak}, the existence of a unique minimizer $v=\nabla\Lambda_q (h)=\nabla\Lambda_a (h)$ of the quenched rate function easily follows from the corresponding annealed statement in the weak disorder regime. Moreover, an almost-sure CLT can be established when the disorder is very weak; see Section~\ref{sec:veryweak}.

\subsection{Proof of Lemma~\ref{lem:series}}
The annealed case is easy. Since the potential is attractive, the Lyapunov exponent $A_\lambda $ is super-additive. Hence, the second limit in~\eqref{eq:LE} is well-defined and, in addition, 
\begin{equation}  
\label{eq:Asuper}
A_\lambda (x) \leq e^{-\fra_\lambda (x)} .
\end{equation}
Since $h\cdot x\leq \fra_\lambda^* (h)\fra_\lambda (x)$, the bound~\eqref{eq:hin} follows from~\eqref{eq:Asuper} and~\eqref{eq:LEproperties} and holds for all sub-critical drifts $h\in {\rm int}\Kal{\lambda}$.  

In the super-critical case $\fra_\lambda^* (h) >1$, pick a unit vector $x$ satisfying $h\cdot x = \fra_\lambda^* (h)\fra_\lambda (x)$.  Then,
\[
A_\lambda ( mx ) e^{m h\cdot x}
\geq \exp\Bigl\{ \frac{m (\fra_\lambda^* (h) - 1)\fra_\lambda (x)}{2}\Bigr\} ,
\]
for all $m$ sufficiently large. Obviously, only paths with  $\abs{\gamma}\geq m$ can contribute to $A_\lambda ( mx )$. On the other hand, for any $\lambda >0$ one can ignore paths with $\abs{\gamma}\geq c_\lambda m$ for some $c_\lambda$ sufficiently large.  It follows that one can find $\alpha >0$, $n_0 >0$ and $y_0$ such that
\[
e^{h\cdot y_0} A_{\lambda , n_0} (y_0 ) \geq e^{2\alpha n_0} .
\]
In view of subadditivity, the target~\eqref{eq:hout} follows by setting $n = kn_0 + r$ and iterating.

\smallskip
The quenched case is slightly more involved. Under suitable assumptions on $V$ (e.g.\ boundedness of ${\rm supp}(V)$ or $\bbE V <\infty $ ), the existence of 
\begin{equation}
\label{eq:QLEbound}
\frq_\lambda (x)  =  -\lim_{N\to\infty} \frac1N \log Q_\lambda  (\lfloor Nx\rfloor  )
=  -\lim_{N\to\infty} \frac1N  \bbE \log Q_\lambda  (\lfloor Nx\rfloor  )
\end{equation}
follows from the subadditive ergodic theorem~\cite{Zerner, KMZ}.

In order to mimic the proofs of~\eqref{eq:hin} and~\eqref{eq:hout}, one needs to apply concentration inequalities in order to control fluctuations of the random quantities on the lhs of~\eqref{eq:QLEbound} around their expectations. This is done in~\cite{Zerner},  under the assumption of $\bbE V^d <\infty$. The speed of convergence of the expectations on the rhs of~\eqref{eq:QLEbound} is under control exactly as in the annealed case. 
\qed

\section{Geometry of Typical Polymers}

\subsection{Skeletons  of Paths.}
Let $\lambda >0$ and $x\in\bbZ^d$ be a distant point. Our characterization of the path measures $\bbQ_x^\lambda$ and $\bbA_x^\lambda$ hinges upon a renormalization construction. In the sequel, $U_\lambda$ denotes the unit ball in either the quenched ($\frq_\lambda$) or the annealed ($\fra_\lambda$) norms. We choose a large scale $K$ and use the dilated shifted balls $KU_\lambda(u) \df u+KU_\lambda$ for a coarse-grained decomposition of paths $\gamma\in\calD ( x)  \df \lbr \gamma :0\mapsto x\rbr$
(see Section~2.2 of  \cite{IoffeVelenik-Annealed}), 
\begin{equation}  
\label{eq:pathDecomp}
\gamma = \gamma_1\cup\eta_1\cup\gamma_2\cup\ldots \cup\eta_m\cup \gamma_{m+1} .
\end{equation}
This decomposition enjoys properties~(a)-(d) below: 

(a) For $i=1, \ldots, m$, the paths $\gamma_i$ are of the form $\gamma_i  : u_{i-1} \mapsto v_i\in\partial KU_\lambda (u_{i-1 })$.

(b) The last path $\gamma_{m+1} :u_m\mapsto x$.

\smallskip
\noindent
Given a set $G$, let us say that a path $\gamma$ with endpoints $u$ and $v$ is in $\calD_G (u ;v )$ if $\gamma\setminus v\subseteq G$. Define $G_1 = KU_\lambda$ and $G_i = KU_\lambda (u_{i-1})\setminus \lb\cup_{j<i} KU_\lambda (u_{j-1}) \rb $ .

\smallskip
(c) $\gamma_i \in\calD_{G_i} (u_{i-1} , v_i )$. 

(d) The paths $\eta_i$ are of the form $\eta_i : v_i\mapsto u_i$ and $\eta_i\cap G_{j-1} = \emptyset$ for any $j <i$. 

\smallskip
\noindent
\textbf{Definition.} 
The set $\hat\gamma_K \df \lb 0=u_0, v_1, u_1, \dots , v_m, u_m =x\rb$ is called the $K$-skeleton of $\gamma$. We say that $\ungamma = (\gamma_1, \dots ,\gamma_{m+1})\sim \hat\gamma_K$ and $\ueta = (\eta_1 , \dots , \eta_m )\sim\hat\gamma_K$ if they satisfy Conditions~(a)-(d) above. 

\smallskip 
\noindent
The collection $\ueta$ is called the hairs of $\hat\gamma_K$. In the sequel we shall concentrate on controlling the geometry of the skeletons. The geometry of hairs is, for every $\lambda >0$ fixed, controlled by a crude comparison with killed random walks, and we refer to Section~2.2 of \cite{IoffeVelenik-Annealed} for the corresponding arguments. 

\smallskip 
\noindent
It follows from Condition~(c) that the paths $\gamma_i$ are pair-wise disjoint. Consequently,
\begin{itemize}
\item In the annealed case, $\Phi_\beta (\gamma_1 \cup\dots \cup\gamma_{m+1}) = \sum_i \Phi_\beta (\gamma_i )$. As a result,
\begin{equation}  
\label{eq:Adecouple}
\sum_{\ungamma\sim\hat\gamma_K} a_\lambda (\gamma_1 \cup\dots \cup\gamma_{m+1})  = \prod 
A_\lambda (u_{i-1 } ; v_i\big| G_i ) ,
\end{equation}
with the obvious notation $A_\lambda (u; v \big|G) \df \sum_{\gamma\in \calD_G (u ;v )} a_\lambda (\gamma )$. 
\item In the quenched case,
\begin{equation}  
\label{eq:Qdecouple}
\sum_{\ungamma\sim\hat\gamma_K} q_\lambda (\gamma_1 \cup\dots \cup\gamma_{m+1})  = \prod Q_\lambda (u_{i-1 } ; v_i\big| G_i ) ,
\end{equation}
and the variables $Q_\lambda (u_{i-1}; v_i \big|G_i) \df \sum_{\gamma\in \calD_{G_i (u_{i-1} ;v_i )}} q_\lambda (\gamma )$ are jointly independent.
\end{itemize}

\subsection{Annealed models.}
\label{ssec:annealed}
Let $\lambda>0$ and $h\in\partial\Kal{\lambda}$ such that $h\cdot x = \fra_\lambda(x)$. Observe first that, by the very definition of $\fra_\lambda$,
\begin{equation}
A_\lambda(x) \geq e^{-\fra_\lambda(x)(1+o(1))}.
\end{equation}
Now, let $\hat\gamma_K = (u_0=0, v_1, u_1, \ldots , v_m, u_m=x)$ be a $K$-skeleton. On the one hand, \eqref{eq:Adecouple} and~\eqref{eq:Asuper} imply that
\[
\log \sum_{\ungamma\sim\hat\gamma_K} a_\lambda (\ungamma) \leq -Km.
\]
On the other hand, independently of the scale $K$,
\begin{equation}
\log\sum_{\ueta\sim\hat\gamma_K} e^{-\lambda\abs{\ueta}} P (\ueta) \leq c_1 (\lambda )m.
\end{equation}
Notice that $\fra_\lambda(u_i-u_{i-1})= K + O(1)$ by construction. We deduce that
\begin{align*}
\bbA_x^\lambda (\hat\gamma_K)
&\leq \exp\Bigl\{ - (1-\epsilon_K)\sum_{i=1}^m \fra_\lambda(u_i-u_{i-1}) - \fra_\lambda(x) 
+\smo{1}\abs{x}
\Bigr\}\\
&\leq \exp\Bigl\{ - (1-\epsilon_K)\sum_{i=1}^m \bigl(\fra_\lambda(u_i-u_{i-1}) - h\cdot(u_i-u_{i-1}) \bigr) 
+\smo{1}\abs{x}
\Bigr\}\\
&= \exp\Bigl\{ - (1-\epsilon_K) \sum_{i=1}^m \frs_a^h(u_i-u_{i-1})
+\smo{1}\abs{x}
\Bigr\},
\end{align*}
where we have introduced the (annealed) surcharge function $\frs_a^h(y) \df \fra_\lambda(y) - h\cdot y$, and $\epsilon_K$ can be chosen arbitrarily small, provided that $K$ is chosen large enough. Defining the (annealed) surcharge of a skeleton $\hat\gamma_K$ by
\[
\frs_a^h(\hat\gamma_K) \df \sum_{i=1}^m \frs_a^h(u_i-u_{i-1}),
\]
we finally obtain the following fundamental surcharge inequality (see~\cite{IoffeVelenik-Annealed} for details):
\begin{lemma}
\label{lem:surcharge-annealed}
For every small $\epsilon>0$, there exists $K_0(d,\beta,\lambda,\epsilon)$ such that
\[
\bbA_x^\lambda \bigl( \frs_a^h(\hat\gamma_K) > 2\epsilon|x| \bigr) \leq e^{-\epsilon |x|},
\]
uniformly in $x\in\Zd$, $h\in\partial\Kal{\lambda}$ such that $h\cdot x = \fra_\lambda(x)$, and scales $K>K_0$.
\end{lemma}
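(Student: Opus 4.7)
The per-skeleton estimate
\[
\bbA_x^\lambda(\hat\gamma_K) \leq \exp\Bigl\{-(1-\epsilon_K)\frs_a^h(\hat\gamma_K)+\smo{1}|x|\Bigr\}
\]
is exactly the display appearing just before the lemma, with $\epsilon_K\to 0$ as $K\to\infty$ and the $\smo{1}$ term arising jointly from the denominator bound $A_\lambda(x)\geq e^{-\fra_\lambda(x)(1+\smo{1})}$ and from the hair contribution $\log\sum_{\ueta\sim\hat\gamma_K}e^{-\lambda|\ueta|}P(\ueta)\leq c_1(\lambda)m$. My plan is to promote this pointwise control into the claimed probability bound via a union bound over admissible $K$-skeletons, paying for their combinatorial entropy.

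Writing $\calN_K(x)$ for the number of $K$-skeletons $(0,v_1,u_1,\dots,v_m,u_m=x)$ with $m$ unspecified, I would sum to obtain
\[
\bbA_x^\lambda\bigl(\frs_a^h(\hat\gamma_K)>2\epsilon|x|\bigr)\leq \calN_K(x)\cdot\exp\Bigl\{-2\epsilon(1-\epsilon_K)|x|+\smo{1}|x|\Bigr\}.
\]
The bound on $\calN_K(x)$ is a two-step count. First, by construction every skeleton step satisfies $\fra_\lambda(u_i-u_{i-1})=K+O(1)$, while $\sum_i\fra_\lambda(u_i-u_{i-1})\geq h\cdot x=\fra_\lambda(x)\leq c_\lambda^2|x|$ by~\eqref{eq:LEproperties}, so the number of skeleton steps obeys $m\leq C|x|/K$. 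Second, once $u_{i-1}$ is fixed, both $v_i$ and $u_i$ lie within an $O(1)$-thickening of $\partial(KU_\lambda(u_{i-1}))$, which contains at most $C'K^{d-1}$ lattice sites. Multiplying and summing over $m$,
\[
\calN_K(x)\leq \sum_{m\leq C|x|/K}(C'K^{d-1})^{2m}\leq \exp\bigl\{C''\,\tfrac{|x|\log K}{K}\bigr\}.
\]

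Setting $\delta_K\df 2\epsilon\epsilon_K+C''(\log K)/K+\smo{1}$, the two bounds combine into
\[
\bbA_x^\lambda\bigl(\frs_a^h(\hat\gamma_K)>2\epsilon|x|\bigr)\leq\exp\bigl\{-(2\epsilon-\delta_K)|x|\bigr\},
\]
and since $\delta_K\to 0$ as $K\to\infty$, one can fix $K_0=K_0(d,\beta,\lambda,\epsilon)$ so that $\delta_{K_0}<\epsilon$, yielding the advertised $e^{-\epsilon|x|}$. The only point requiring genuine care is uniformity in $(x,h)$ of the per-skeleton estimate: the denominator $A_\lambda(x)$ depends only on $x$, the numerator and hair bounds depend only on the skeleton $(u_i)$, and the constraint $h\cdot x=\fra_\lambda(x)$ enters in exactly one place, namely in the telescoping identity $\sum_i h\cdot(u_i-u_{i-1})=h\cdot x=\fra_\lambda(x)$ that converts the raw estimate into the surcharge form. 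Apart from verifying that identity, the lemma is essentially a bookkeeping exercise on top of the material already assembled above; I do not expect any serious obstacle.
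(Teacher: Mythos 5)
Your overall strategy (the per-skeleton surcharge bound promoted to a probability estimate by a union bound over skeletons, paying their combinatorial entropy) is exactly the intended one, but the entropy count contains a genuine gap: the claim that every $K$-skeleton of a path $\gamma:0\to x$ has at most $C|x|/K$ steps is not true, and your justification of it is a non sequitur. From $\fra_\lambda(u_i-u_{i-1})=K+O(1)$ you only get $m(K-O(1))\le\sum_i\fra_\lambda(u_i-u_{i-1})$, and the triangle inequality gives $\sum_i\fra_\lambda(u_i-u_{i-1})\ge\fra_\lambda(x)$ --- both inequalities point the wrong way for an upper bound on $m$. Since the polymer length is not fixed under $\bbA_x^\lambda$ (long paths are only exponentially penalized through $\lambda|\gamma|$), a path from $0$ to $x$ may backtrack arbitrarily and its skeleton may have arbitrarily many steps; consequently $\calN_K(x)$, the number of \emph{all} skeletons ending at $x$, is not finite, and the bound $\calN_K(x)\le\exp\{C''|x|\log K/K\}$ is false as stated. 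Note also that the bad event $\{\frs_a^h(\hat\gamma_K)>2\epsilon|x|\}$ is precisely the event on which you cannot bound $\sum_i\fra_\lambda(u_i-u_{i-1})$ by $C|x|$: there the surcharge, hence the total skeleton length $\frs_a^h(\hat\gamma_K)+h\cdot x$, may be of any size, so the restriction to $m\le C|x|/K$ cannot be deterministic --- it is itself a probabilistic (energy versus entropy) estimate.

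The repair is the step you skipped: keep the skeleton-dependent exponent in the union bound instead of replacing it by the uniform $2\epsilon|x|$. For a skeleton with $m$ steps one has $\frs_a^h(\hat\gamma_K)=\sum_i\fra_\lambda(u_i-u_{i-1})-h\cdot x\ge m(K-O(1))-c_\lambda^2|x|$, while the entropy per step is only $CK^{d-1}$ and the hairs contribute only $e^{c_1(\lambda)}$ per step. Summing $\exp\{-(1-\epsilon_K)\frs_a^h(\hat\gamma_K)+\smo{1}|x|\}$ over skeletons, split according to $m\le C'|x|/K$ (moderate skeletons, where your computation applies verbatim) and $m>C'|x|/K$, and in the second range use a fraction of $(1-\epsilon_K)m(K-O(1))$ to absorb both the $e^{Cm\log K}$ entropy and the $c_\lambda^2|x|$ term, choosing $C'$ and then $K_0$ large enough; this also yields the reduction to moderate skeletons used later. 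This is exactly how the argument is run in the paper's quenched discussion following \eqref{eq:logQl} and in~\cite{IoffeVelenik-Annealed}. With that energy--entropy step inserted, the rest of your bookkeeping (the telescoping use of $h\cdot x=\fra_\lambda(x)$, the $O(1)$-thickened shell count $CK^{d-1}$ for $v_i,u_i$, and the final choice of $K_0$) is correct.
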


\subsection{Quenched Models.}
In the quenched case, (logarithms of) partition functions are random quantities and we need to control both the averages and the fluctuations.
\begin{lemma}
\label{lem:QxConc}
For any $\lambda > 0 $, there exists $c = c(\lambda ) >0 $ such that
\begin{equation}  
\label{eq:QxConc}
\bbP\bigl(\bigl| \log Q_\lambda (x) - \bbE \log Q_\lambda (x)\bigr|  \geq t\bigr) \leq 
\exp\bigl\{ -c \frac{t^2}{\abs{x}}\bigr\}  ,
\end{equation}
uniformly in $\abs{x}$ large enough.  
\end{lemma}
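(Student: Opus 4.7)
I treat $F(V) \df \log Q_\lambda(x)$ as a convex function of the bounded i.i.d.\ environment and apply Talagrand's convex concentration inequality. The two main inputs are the convexity of $F$ in $V$ and a uniform, $V$-independent bound on its Euclidean gradient of order $\sqrt{|x|}$, both driven by the polymer local time $\ell_\gamma(y)$ under the conjugate quenched measure $\bbQ_\lambda^{x}$.

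\emph{Step 1 (Convexity).} Direct differentiation gives $\partial_{V(y)} F = -\beta\, \bbQ_\lambda^{x}[\ell_\gamma(y)]$ and the Hessian $\partial^2_{V(y)V(z)} F = \beta^2 \mathrm{Cov}_{\bbQ_\lambda^x}(\ell_\gamma(y), \ell_\gamma(z))$. Since any covariance matrix is positive semidefinite, $F$ is convex in $V$.

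\emph{Step 2 (Uniform local-time bound).} Decomposing any path $\gamma:0\to x$ with $\ell_\gamma(y)=k+1$ into a first passage $0\to y$, $k$ first-return loops at $y$, and a last excursion $y\to x$ avoiding $y$, the weight factorizes and gives
\[
\bbE_{\bbQ_\lambda^{x}}[\ell_\gamma(y)] = \frac{F_{0\to y}\, G^*_{y\to x}}{(1-L^*_y)^2\, Q_\lambda(x)} \leq \frac{1}{1-L^*_y},
\]
where $L^*_y$ is the first-return amplitude at $y$. Since $V \geq 0$, $L^*_y$ is dominated by its potential-free counterpart $1-1/G(e^{-\lambda})$, with $G(e^{-\lambda})=\sum_n p_n(0,0)e^{-\lambda n}$ the discounted SRW Green function at the origin; because $\lambda > 0$ we have $G(e^{-\lambda}) < \infty$ in every dimension, hence $L^*_y \leq L^* < 1$ uniformly in $y$. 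This yields $\bbQ_\lambda^{x}[\ell_\gamma(y)] \leq C_1$ uniformly in $V$, $x$, $y$.

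\emph{Step 3 ($\ell^2$-gradient bound and concentration).} A minimal-length path gives $Q_\lambda(x) \geq e^{-C|x|}$, while $\sum_{|\gamma| \geq \ell} q_\lambda(\gamma) \leq C e^{-\lambda \ell}$ (an immediate consequence of $V \geq 0$ and $\lambda > 0$), so $\bbE_{\bbQ_\lambda^x}[|\gamma|] \leq C_2 |x|$ uniformly in $V$. Combining with Step~2,
\[
\|\nabla F(V)\|_2^2 = \beta^2 \sum_y \bbQ_\lambda^{x}[\ell_\gamma(y)]^2 \leq \beta^2 C_1 \sum_y \bbQ_\lambda^{x}[\ell_\gamma(y)] = \beta^2 C_1\, \bbE_{\bbQ_\lambda^x}[|\gamma|] \leq \beta^2 C_1 C_2 |x|,
\]
uniformly in $V$. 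Since $\supp V$ is bounded, Talagrand's convex-concentration inequality applied to the convex $O(\sqrt{|x|})$-Lipschitz function $F$ on the product of bounded intervals yields
\[
\bbP(|F - \mathrm{Med}\, F| \geq t) \leq 4\exp(-c t^2/|x|),
\]
and the mean--median gap is $O(\sqrt{|x|})$ by the variance bound, hence it can be absorbed in the constants to yield~\eqref{eq:QxConc}.

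\textbf{Main obstacle.} The crux is the uniform-in-$V$ mass gap $L^*_y \leq L^* < 1$, which formalizes that the polymer cannot accumulate arbitrarily many visits to a single site. This is exactly the quantitative incarnation of the assumption $\lambda > 0$ on bounded potentials: at $\lambda = 0$ in $d \leq 2$ the free walk is recurrent and $L^* = 1$, breaking both the $\ell^2$-Lipschitz estimate and the $\sqrt{|x|}$ Gaussian scale. A minor technical point is that $V$ is indexed by the infinite set $\bbZ^d$, but only coordinates $V(y)$ with $y$ within distance $C(\lambda)|x|$ of $[0,x]$ affect $F$ non-negligibly (paths of length exceeding $C(\lambda)|x|$ contribute negligibly to $Q_\lambda(x)$), so Talagrand's finite-dimensional statement applies after a standard truncation.
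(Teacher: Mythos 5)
Your proposal is correct and follows essentially the same route as the paper: both treat $\log Q_\lambda(x)$ as a convex function of the environment, derive an $\ell^2$-Lipschitz bound of order $\sqrt{|x|}$ from (a) a uniform-in-$V$ $O(1)$ bound on the local time at any single site (coming from $\lambda>0$ forcing the per-loop weight below one) and (b) the uniform bound $\bbQ_\lambda^{x}[\,|\gamma|\,]=O(|x|)$, and then conclude via Talagrand's convex concentration inequality (the paper cites the same statement, \cite[Cor.~4.10]{Ledoux}). The only cosmetic difference is that the paper estimates $\bbQ_\lambda^{x}\bigl[\sum_z\ell_\gamma(z)^2\bigr]$ via a conditional second-moment bound, while you bound $\sum_y\bigl(\bbQ_\lambda^{x}[\ell_\gamma(y)]\bigr)^2$ via a first-moment bound; these are interchangeable for the Lipschitz estimate.
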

\begin{proof}
We follow Flury~\cite{Flury-LE}, although working with the conjugate $\lambda$-ensemble helps. For a given realization $\underline{\frv} = \lbr{ \frv_x}\rbr$ of the environment, define
\[
 F_\lambda^x [\underline{\frv} ] \df \log \sum_{X (\gamma ) = x} e^{-\lambda \abs{\gamma}
-\sum_y \frv_y \ell_\gamma (y)} P(\gamma)\ \ 
{\rm and}\ \ 
\bbQ_\lambda^{x , \underline{\frv}}  (\gamma ) \df \frac{e^{-\lambda \abs{\gamma}
-\sum_y \frv_y \ell_\gamma (y)} P(\gamma)}{e^{F_\lambda^x [\underline{\frv} ]} }. 
\]
Since $\lambda\geq 0$ and the entries of $\underline{\frv}$ are non-negative,  there exists $c = c(\lambda ) $ such that
\begin{equation}  
\label{eq:gammaBound}
\bbQ_\lambda^{x, \underline{\frv}} \Bigl( \sum_z \ell_\gamma^2 (z)  \Bigr)   \leq c \abs{x} .
\end{equation}
In order to see this, given $z\in\bbZ^d$, define the set of loops
\[
 \calL_z \df \setof{\eta :z\mapsto z}{\ell_\eta  ( z ) = 1} .
\]
Evidently,
\[
 \sum_{\eta \in \calL_z}Q_\lambda (\eta )\leq e^{-\lambda} .
\]
Now, any path $\gamma\in\calD (x)  $ with $\ell_\gamma (z) = n$ has a well-defined decomposition
\[
\gamma = \gamma_0\cup\eta_1\cup\dots\cup\eta_{n-1},
\]
with $\ell_{\gamma_0} (z) = 1$ and $\eta_i\in\calL_z$.  It follows that
\begin{equation}  
\label{eq:lzBound} 
\bbQ_\lambda^{x ,\underline{\frv}}  \lb \ell_\gamma (z)^2\big| \ell_z (\gamma )>0\rb \leq 
\sum_n n^2 e^{-\lambda (n-1)} \df c_1 (\lambda ) , 
\end{equation}
uniformly in the realizations $\underline{\frv}$ of the environment. Consequently, 
\[
\bbQ_\lambda^{x ,  \underline{\frv}} \Bigl( \sum_z \ell_\gamma^2 (z)  \Bigr)  \leq 
c_1(\lambda )\sum_z \bbQ_ \lambda^{x , \underline{\frv} }\bigl( \ell_\gamma (z ) >0\bigr) 
\leq c_1 \bbQ_\lambda^{x ,  \underline{\frv}} \lb \abs{\gamma}\rb \leq c_2 \abs{x} .
\]
The last inequalllity above is straightforward since we assume that 
$\lambda >0$ and that the distribution of random environment has bounded support.

At this stage, we infer that $F_\lambda^x [\cdot ]$ is Lipschitz: Given two realizations of the environment 
$\underline{\frw}$ and $\underline{\frv}$, define $\frv^t_\cdot \df t\frw_\cdot + (1-t)\frv_\cdot$. Then, 
\[
F_\lambda^x [ \underline{\frw} ] - F_\lambda^x [ \underline{\frv} ] = \int_0^1 
\bbQ_\lambda^{x , \underline{\frv}^t}\Bigl(
\sum_z (\frw_z  - \frv_z )\ell_\gamma (z) \Bigr) \dd t \leq 
\sqrt{c\abs{x} }\cdot 
\|\underline{\frw} -\underline{\frv}  \|_2 .
\]
Indeed, 
\[
\bbQ_\lambda^{x , \underline{\frv} }\lb  \sqrt{\sum_z\ell_\gamma (z)^2}\rb \leq 
\sqrt{  \bbQ_\lambda^{x  , \underline{\frv} } \lb  \sum_z \ell_\gamma (z )^2\rb } 
\]
and~\eqref{eq:gammaBound} applies. Since $F_\lambda^x [\cdot ]$ is convex
and, as we have checked above, Lipschitz, \eqref{eq:QxConc} follows from concentration inequalities on product spaces (see, e.g., \cite[Corollary~4.10]{Ledoux}) .  
\qed\end{proof}

Lemma~\ref{lem:QxConc} leads to a lower bound on the random partition function $Q_\lambda (x)$. Define
\[
\hat \epsilon (r) \df - \min_{ \frq_\lambda (z )\geq r} \lbr \frac{ \frq_\lambda (z ) + 
 \bbE\log Q_\lambda (z) }{\frq_\lambda (z )}\rbr .
\]
By subadditivity, $\hat \epsilon (r)$ is non-negative, and  $\lim_{r\to\infty} \hat \epsilon (r ) = 0$. By~\eqref{eq:QxConc},
\begin{equation}  
\label{eq:QlControl}
\bbP\lb Q_\lambda (x ) \leq e^{- \frq_\lambda (x)\lb 1+\hat \epsilon (\abs{x}) +t\rb }\rb 
\leq \exp\lbr -c  {t^2}{\abs{x}}\rbr .
\end{equation}
We may thus assume that there exists $\epsilon (r)\to 0$ such that
\begin{equation}
\label{eq:qlBound}
\log  Q_\lambda (x ) \geq -  \frq_\lambda (x) \lb 1+\epsilon (\abs{x})\rb  ,
\end{equation}
$\bbP$-a.s.\ for all $\abs{x}$ sufficiently large.

\smallskip
The lower bound~\eqref{eq:qlBound} is used to control the geometry of the skeletons $\hat\gamma_K$. Namely, 
\begin{equation}  
\label{eq:logQl}
\log Q_\lambda (\hat\gamma_K ) = \log\sum_{\ueta\sim\hat\gamma_K} 
q_\lambda (\ueta ) + \sum_{i=1}^m \log Q_\lambda\lb u_{i- 1} ; v_i\big| G_i \rb . 
\end{equation}
A comparison with the simple random walk killed at the constant rate $\lambda >0$ reveals that the following bounds hold uniformly in the realizations of the environment:
\[
\log\sum_{\ueta\sim\hat\gamma_K}q_\lambda (\ueta ) \leq c_2 (\lambda )m\ \ {\rm and}\ \ 
\log Q_\lambda\lb u_{i- 1} ; v_i\big| G_i \rb  \leq  -c_3 (\lambda )K .
\]
It follows that we may restrict our attention to {\em moderate} trunks with at most $m\leq c_4\abs{x}/K$ vertices.  Consequently, the first term in~\eqref{eq:logQl} is at most of order $c_2 c_4\abs{x}/K$.  

Assuming that $m\leq c_4\abs{x}/K$, let us focus on the second term in~\eqref{eq:logQl}. To simplify notations, we shall describe it as a random variable $F_{\hat\gamma_K} \df \sum_i\log Q_\lambda\lb u_{i- 1} ; v_i\big| G_i \rb$. First of all, since both $u_i$ and $v_i$ belong to $\partial K U_\lambda (u_{i-1} )$, 
\[
\bbE F_{\hat\gamma_K}  \leq -\sum \frq_\lambda (v_i -u_{i-1} ) = 
- \sum \frq_\lambda (u_i -u_{i-1}) + O\lb\frac{\abs{x}}{K}\rb .
\]
\begin{lemma}
\label{lem:FConc}
For any $\lambda > 0 $, there exists $c = c(\lambda ) >0 $ such that
\begin{equation}  
\label{eq:FConc}
\bbP\bigl(\bigl| F_{\hat\gamma_K} - \bbE F_{\hat\gamma_K} \bigr|  \geq t\bigr) \leq 
{\exp}\bigl\{-\frac{t^2}{\abs{x}}\bigr\}  ,
\end{equation}
uniformly in $\abs{x}$ large enough, in renormalization scales $K$ and in moderate skeletons $\hat\gamma_K$.
\end{lemma}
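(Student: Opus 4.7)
The plan is to reduce the result to the single-cell estimate of Lemma~\ref{lem:QxConc} by exploiting the crucial structural observation that the domains $G_i = KU_\lambda(u_{i-1})\setminus\bigl(\cup_{j<i} KU_\lambda(u_{j-1})\bigr)$ are pairwise disjoint by construction. Consequently the $m$ random variables $\log Q_\lambda(u_{i-1};v_i\,|\,G_i)$ depend on disjoint families $\lbr V(z)\rbr_{z\in G_i}$ of the i.i.d.\ environment and are therefore mutually independent, so that $F_{\hat\gamma_K} - \bbE F_{\hat\gamma_K}$ is an explicit sum of $m$ independent centred terms. It will suffice to bound the sub-Gaussian variance proxy of each summand uniformly in $i$ and in the chosen moderate skeleton, and then to add the proxies.

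For each individual summand I would rerun the proof of Lemma~\ref{lem:QxConc} verbatim inside the mesoscopic domain $G_i$. Viewed as a function of the restricted environment $\underline{\frv}\big|_{G_i}$, the quantity $\log Q_\lambda(u_{i-1};v_i\,|\,G_i)$ remains a log-sum-exp of affine functions of $\underline{\frv}$, hence convex; its Lipschitz modulus in $\ell^2$ is again controlled by the conditional second moment of the occupation field. The loop tail bound~\eqref{eq:lzBound} is insensitive both to the environment and to the domain restriction, so the key input that must be verified in the box $G_i$ is the local analogue of~\eqref{eq:gammaBound}, namely $\bbQ_\lambda^{u_{i-1};v_i,\underline{\frv}|_{G_i}}\bigl(\sum_z\ell_\gamma^2(z)\bigr) \leq c(\lambda)K$. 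This holds uniformly in $\underline{\frv}$ because $\fra_\lambda(v_i - u_{i-1}) = \Theta(K)$, the factor $e^{-\lambda|\gamma|}$ with $\lambda>0$ forces $\bbQ(|\gamma|) \leq c(\lambda)K$, and $\supp(V)$ is bounded. A standard concentration inequality for convex Lipschitz functions on product spaces with bounded marginals (e.g.\ \cite[Corollary~4.10]{Ledoux}) then yields
\[
\bbP\bigl(\bigl|\log Q_\lambda(u_{i-1};v_i\,|\,G_i) - \bbE\log Q_\lambda(u_{i-1};v_i\,|\,G_i)\bigr| \geq s\bigr) \leq \exp\lbr -c s^2/K\rbr .
\]

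A sum of $m$ independent centred sub-Gaussian variables with individual variance proxies of order $K$ is itself sub-Gaussian with variance proxy of order $mK$, and moderateness of the skeleton imposes $m \leq c_4|x|/K$, so the overall proxy is at most $c\cdot c_4\abs{x}$. The usual Chernoff bound then delivers~\eqref{eq:FConc} with a constant depending only on $\lambda$ and on $\supp(V)$, and in particular uniform in $K$ and in the chosen moderate skeleton $\hat\gamma_K$. I expect the main obstacle to lie in the single-cell step, specifically the $K$-uniformity of the Lipschitz estimate: one must verify that restricting to the mesoscopic domain $G_i$ does not inflate the occupation-field second moment beyond linear order in $K$, which in turn reduces to the linear-in-$K$ control $\bbQ(|\gamma|) \leq c(\lambda)K$ for paths in $\calD_{G_i}(u_{i-1};v_i)$. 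Once this bound is in place, the independence across $G_i$'s makes the passage from single-cell to global concentration routine.
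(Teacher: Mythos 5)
Your argument is correct and uses the same essential inputs as the paper's proof: disjointness of the $G_i$'s, a single-cell bound $\bbQ_\lambda\bigl(\sum_z \ell_{\gamma_i}^2(z)\bigm|G_i\bigr)\leq cK$, moderateness $m\leq c_4|x|/K$, and concentration for convex Lipschitz functions on product spaces. The only (cosmetic) difference is that the paper exploits the disjointness by assembling the cell-wise estimates into one global Lipschitz bound $\sqrt{c|x|}$ for $F_{\hat\gamma_K}$ and applying the Ledoux inequality once, rather than treating each $\log Q_\lambda(u_{i-1};v_i\,|\,G_i)$ as an independent sub-Gaussian summand and adding variance proxies.
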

The proof of this lemma is similar to the proof of Lemma~\ref{lem:QxConc} and we shall sketch it below. The size of the scale $K$ is not essential for the proof. It is essential, however, for an efficient use of the lemma: Assuming that~\eqref{eq:FConc} holds, we choose $1\gg \delta \gg\sqrt{ \log K/K} $. By~\eqref{eq:FConc},
\[
\bbP\bigl(\bigl| F_{\hat\gamma_K} - \bbE F_{\hat\gamma_K} \bigr|  \geq \delta \abs{x}\bigr) \leq 
\exp\lbr  -   c_5 \delta^2 \abs{x} \rbr , 
\]
for any moderate trunk $\hat\gamma_K$. Since there are at most $\exp\lbr c_6 \frac{\log K}{K}\abs{x}\rbr$ moderate trunks, we conclude that
\begin{lemma}
\label{lem:SkBound}
For any $\delta >0$, there exists a finite scale $K$ such that
\begin{equation}  
\label{eq:SkBound}
F_{\hat\gamma_K} \leq  -\sum_i \frq_\lambda (u_i - u_{i-1} ) +\delta \abs{x} ,
\end{equation}
$\bbP$-a.s.\ for all $\abs{x}$ large enough (and all the corresponding moderate skeleton trunks of paths $\gamma \in\calD ( x )$).
\end{lemma}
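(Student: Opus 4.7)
The plan is to combine the concentration estimate of Lemma~\ref{lem:FConc}, the expectation bound $\bbE F_{\hat\gamma_K} \leq -\sum_i \frq_\lambda(u_i-u_{i-1}) + O(|x|/K)$ already established just before the statement, and a union bound over all moderate skeletons of spatial extension $x$. The essential point is the entropy/concentration tradeoff: for a fixed scale $K$, there are only $\exp\{O(\log K/K)\,|x|\}$ moderate trunks (sub-exponentially many in $|x|$, with rate vanishing as $K\to\infty$), while the concentration bound decays with Gaussian rate $\exp\{-ct^2/|x|\}$. This is exactly the scheme already hinted at in the paragraph following Lemma~\ref{lem:FConc}; what remains is to pick parameters carefully.

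Fix $\delta>0$. First I would choose $K=K(\delta)$ large enough that the two errors it controls are both negligible: on one hand, the $O(|x|/K)$ correction in $\bbE F_{\hat\gamma_K}$ should be at most $\tfrac{\delta}{2}|x|$; on the other, the entropy rate $c_6\log K/K$ of moderate skeletons (discussed below) should be strictly smaller than the Gaussian rate $c_5\delta^2/4$ obtained by applying Lemma~\ref{lem:FConc} with $t=\tfrac{\delta}{2}|x|$. Both requirements are met for $K$ large since $(\log K)/K\to 0$.

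For the combinatorial count, I would argue that a moderate skeleton $\hat\gamma_K=(u_0=0,v_1,u_1,\dots,v_m,u_m=x)$ has $m\leq c_4|x|/K$ vertices, and each pair $(v_i,u_i)$ lies in the shifted ball $K\U_\lambda(u_{i-1})$, so there are at most $(CK^{2d})^m$ moderate skeletons of fixed length $m$; summing over $m\leq c_4|x|/K$ gives a total bound of $\exp\{c_6(\log K/K)\,|x|\}$. A union bound over this collection, together with Lemma~\ref{lem:FConc}, yields
\[
\bbP\bigl(\exists\text{ moderate }\hat\gamma_K:\ F_{\hat\gamma_K}>\bbE F_{\hat\gamma_K}+\tfrac{\delta}{2}|x|\bigr)\leq \exp\bigl\{c_6\tfrac{\log K}{K}|x|-c_5\tfrac{\delta^2}{4}|x|\bigr\}\leq e^{-c_7\delta^2|x|},
\]
by the choice of $K$. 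Summability over $x\in\Zd$ and Borel--Cantelli upgrade this to a $\bbP$-a.s.\ statement valid for all $|x|$ large enough; combining with the expectation bound produces \eqref{eq:SkBound}.

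The main obstacle is the entropy bookkeeping in the second paragraph: one has to confirm that the crude count really does give a rate $\log K/K$ that vanishes in $K$, uniformly in $|x|$, despite the geometric constraints (disjointness of the $G_i$, endpoint in $\partial K\U_\lambda(u_{i-1})$, hair contributions) on what counts as an admissible moderate trunk. Once the uniform count is in place, the Gaussian concentration plus union bound plus Borel--Cantelli is routine.
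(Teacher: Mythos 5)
Your proposal is correct and follows essentially the same route as the paper: the text immediately after Lemma~\ref{lem:FConc} carries out precisely this union bound over the $\exp\{c_6(\log K/K)|x|\}$ moderate trunks, using the Gaussian concentration with $t=\delta|x|$ and the constraint $\delta\gg\sqrt{\log K/K}$, and then combines with the expectation bound and Borel--Cantelli. Your parameter bookkeeping and entropy count $(CK^{2d})^m$ with $m\leq c_4|x|/K$ is the intended justification for the stated count, so there is no gap.
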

\begin{proof}[of Lemma~\ref{lem:FConc}]
Introduce the following notation: Given a realization $\underline{\frv}_i$ of the environment on $G_i$, let $\bbQ_\lambda^{\ufrv_i } \lb\cdot\big| G_i \rb $ be the corresponding probability distribution on the set of paths $\calD_{G_i } (u_{i-1} , v_{i})$. In this notation,
\[
F_{\hat\gamma_K} (\ufrw ) - 
F_{\hat\gamma_K} (\ufrv ) = \sum_{i=1}^{m}
\int_0^1 \bbQ_\lambda^{\ufrv_i^t } \Bigl( \sum_{z\in G_i }
\ell_{\gamma_i }(z ) \lb \frw_z  -\frv_z \rb 
\Bigm| G_i \Bigr)  ,
\]
where $\ufrv^t = t\ufrw + (1-t )\ufrv $.  The conclusion follows as in the proof of Lemma~\ref{lem:QxConc}.
\qed
\end{proof}
We can now proceed as in the annealed case and introduce the (quenched) surcharge of a skeleton $\hat\gamma_K$,
\[
\frs_q^h(\hat\gamma_K) \df \sum_{i=1}^m \bigl( \frq_\lambda(u_i-u_{i-1}) - h\cdot (u_i-u_{i-1}) \bigr).
\]
We then obtain the following quenched version of the surcharge inequality:
\begin{lemma}
\label{lem:surcharge-quenched}
For every small $\epsilon>0$, there exists $K_0(d,\beta,\lambda,\epsilon)$ such that, $\bbP$-a.s.,
\[
\bbQ_x^\lambda \bigl( \frs_q^h(\hat\gamma_K) > 2\epsilon|x| \bigr) \leq e^{-\epsilon |x|},
\]
uniformly in sufficiently large $x\in\Zd$, $h\in\partial\Kql{\lambda}$ such that $h\cdot x = \frq_\lambda(x)$, and scales $K>K_0$.
\end{lemma}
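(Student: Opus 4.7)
The plan is to mimic the annealed proof of Lemma~\ref{lem:surcharge-annealed} from Subsection~\ref{ssec:annealed}, replacing each deterministic inequality by its $\bbP$-a.s.\ counterpart prepared in the present subsection. Via~\eqref{eq:logQl}, the quenched skeleton weight factorizes into a hair term and the trunk term $F_{\hat\gamma_K}$, while the denominator $Q_\lambda(x)$ admits the a.s.\ lower bound~\eqref{eq:qlBound}. The key substitutions are Lemma~\ref{lem:SkBound} in place of the super-additive upper bound on pinned partition functions used in the annealed case, and~\eqref{eq:qlBound} in place of the matching deterministic lower bound on $A_\lambda(x)$.

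First, I would dispose of non-moderate skeletons ($m > c_4|x|/K$): using the uniform bound $\log Q_\lambda(u_{i-1}; v_i|G_i) \leq -c_3(\lambda) K$, the hair bound $c_2 m$, a crude count of at most $K^{O(m)}$ skeletons of a given length, together with the a.s.\ lower bound~\eqref{eq:qlBound} and~\eqref{eq:LEproperties}, their total $\bbQ_x^\lambda$-mass is exponentially small in $|x|$ once $c_4$ and $K$ are chosen large, matching the informal observation made between~\eqref{eq:logQl} and Lemma~\ref{lem:FConc}. For the remaining moderate skeletons, I would combine (i)~the hair bound $c_2 m$, (ii)~Lemma~\ref{lem:SkBound}, (iii)~the lower bound~\eqref{eq:qlBound}, and (iv)~the telescoping identity $\frq_\lambda(x) = h\cdot x = \sum_i h\cdot(u_i - u_{i-1})$, which holds because $u_0 = 0$, $u_m = x$ and $h$ is dual to $x$. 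Rearranging yields, on a single $\bbP$-a.s.\ good event and uniformly in moderate skeletons with endpoints $0$ and $x$,
\[
\log\bbQ_x^\lambda(\hat\gamma_K) \leq -\frs_q^h(\hat\gamma_K) + \Bigl(\tfrac{c_2 c_4}{K} + 2\delta\Bigr)|x|,
\]
where $\delta$ is the parameter in Lemma~\ref{lem:SkBound}. A union bound over the (at most $\exp\{c_6 (\log K)/K \cdot |x|\}$) moderate skeletons with $\frs_q^h > 2\epsilon|x|$ then gives
\[
\bbQ_x^\lambda\bigl(\frs_q^h(\hat\gamma_K) > 2\epsilon|x|\bigr) \leq \exp\Bigl\{\Bigl(\tfrac{c_2 c_4}{K} + 2\delta + c_6\tfrac{\log K}{K} - 2\epsilon\Bigr)|x|\Bigr\} \leq e^{-\epsilon|x|},
\]
once $\delta$ is small and $K$ is large.

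The main obstacle is not this final algebra but ensuring that~\eqref{eq:qlBound} and Lemma~\ref{lem:SkBound} hold \emph{simultaneously} on a common full-measure event, uniformly in $x$ and in all moderate trunks ending at $x$. This is precisely where the Gaussian concentration estimate~\eqref{eq:FConc} earns its keep in the proof of Lemma~\ref{lem:SkBound}: the factor $\exp\{-t^2/|x|\}$ is tuned to absorb the entropy $\exp\{c_6 (\log K)/K \cdot |x|\}$ of moderate trunks, so that Borel--Cantelli supplies the required uniform almost sure bound. Once this good event is secured, the remainder of the argument is directly parallel to the annealed case, with the additional technicality of having to combine two distinct almost sure bounds (one for the denominator, one for each skeleton) through a single Borel--Cantelli step over $x \in \Zd$.
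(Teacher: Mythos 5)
Your proposal reconstructs exactly the argument the paper intends: the surcharge inequality is obtained by substituting, in the annealed template of Lemma~\ref{lem:surcharge-annealed}, the a.s.\ trunk bound of Lemma~\ref{lem:SkBound} for the super-additive bound~\eqref{eq:Asuper}, the a.s.\ lower bound~\eqref{eq:qlBound} on $Q_\lambda(x)$ for the deterministic lower bound on $A_\lambda(x)$, using the telescoping $\frq_\lambda(x) = h\cdot x = \sum_i h\cdot(u_i-u_{i-1})$, the hair bound, the restriction to moderate trunks, and a union bound over the $\exp\{c_6(\log K)/K\cdot|x|\}$ moderate skeletons. Your discussion of the role of the concentration estimate~\eqref{eq:FConc} in making Lemma~\ref{lem:SkBound} hold uniformly over moderate trunks via Borel--Cantelli, and of the need to intersect the two full-measure events (one for~\eqref{eq:qlBound}, one for Lemma~\ref{lem:SkBound}), correctly identifies the only genuinely quenched subtlety the paper addresses.
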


\subsection{Irreducible decomposition and effective directed structure.}

The surcharge inequalities of Lemmas~\ref{lem:surcharge-annealed} and~\ref{lem:surcharge-quenched} pave the way to a detailed analysis of the structure of typical paths, as they reduce probabilistic estimates to purely geometric ones. We only describe here the resulting picture, but details can be found in~\cite{IoffeVelenik-Annealed}. 

\smallskip
Let $\lambda >0$ and $h\in\partial\Kal{\lambda}$. Let us fix $\delta\in(0,1)$. We define the forward cone by
\[
\fcone = \setof{y\in\Zd}{\frs_a^h(y) < \delta\fra_\lambda(y)},
\]
and the \emph{backward cone} by $\bcone = -\fcone$. Given $\gamma=(\gamma_0,\ldots,\gamma_n):0\to x$, we say that $\gamma_k$ is a cone-point of $\gamma$ if
\[
\gamma \subset \bigl( \gamma_k + \fcone \bigr) 
\cup \bigl( \gamma_k + \bcone \bigr).
\]
The next theorem shows that typical paths have a positive density of cone-points.
\begin{theorem}\cite{IoffeVelenik-Annealed}
\label{thm:ConePoints}
Let $\#_{\scriptscriptstyle\rm cone}(\gamma)$ be the number of cone-points of $\gamma$. There exist $c,C>0$ and $\delta^\prime >0$, depending only on $d,\beta,\delta$ and $\lambda$, such that
\begin{equation}  
\label{eq:AlBound}
\bbA_\lambda^x \bigl( \#_{\scriptscriptstyle\rm cone}(\gamma) < c |x| ) \leq e^{-C |x|},
\end{equation}
uniformly in all sufficiently large $x\in\Zd$ satisfying  $\frs_a^h(x) \leq \delta^\prime \fra_\lambda(x)$.
\end{theorem}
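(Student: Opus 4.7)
The plan is to reduce Theorem~\ref{thm:ConePoints} to a deterministic geometric statement about skeletons using Lemma~\ref{lem:surcharge-annealed}, and then to upgrade skeleton cone-points to actual cone-points of $\gamma$ via control on the hairs. Fix a small parameter $\kappa\ll\delta$ and a renormalization scale $K\gg K_0(d,\beta,\lambda,\kappa)$. Applying (a direct extension of) Lemma~\ref{lem:surcharge-annealed} to any $x$ satisfying $\frs_a^h(x)\leq\delta'\fra_\lambda(x)$ with $\delta'\defby\kappa$ yields an event of $\bbA_\lambda^x$-probability at least $1-e^{-\kappa\abs{x}}$ on which $\frs_a^h(\hat\gamma_K)\leq 2\kappa\abs{x}$. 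The crude killed-random-walk bounds cited in Section~3.1 also let us restrict to moderate skeletons with $m\leq c_0\abs{x}/K$ vertices, and with every hair $\eta_i$ of displacement $\ll K$, outside further events of probability $e^{-c\abs{x}}$.

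I would then prove the following purely geometric lemma on such skeletons: at least $m/2$ indices $i\in\{0,\dots,m\}$ are \emph{$(\delta/2)$-skeleton-cone-points}, meaning $u_j-u_i\in Y^>_{\delta/2}(h)$ for every $j>i$ and $u_i-u_j\in Y^>_{\delta/2}(h)$ for every $j<i$. The proof rests on the algebraic identity
\[
h\cdot(u_{j_2}-u_{j_1}) = \sum_{\ell=j_1+1}^{j_2}\bigl(\fra_\lambda(u_\ell-u_{\ell-1}) - \frs_a^h(u_\ell-u_{\ell-1})\bigr),
\]
together with subadditivity $\fra_\lambda(u_{j_2}-u_{j_1})\leq (K+O(1))(j_2-j_1)$; these together reduce the inclusion $u_{j_2}-u_{j_1}\in Y^>_{\delta/2}(h)$ to the condition that the average skeleton surcharge over $(j_1,j_2]$ is at most $(\delta/3)K$. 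A Hardy--Littlewood-type maximal inequality applied to the non-negative sequence $s_\ell\defby\frs_a^h(u_\ell-u_{\ell-1})$ then bounds the number of indices $i$ for which some interval through $i$ violates this condition by $C\,\frs_a^h(\hat\gamma_K)/(\delta K)\leq C'(\kappa/\delta)\,m$, which is less than $m/2$ for $\kappa\ll\delta$.

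Each $(\delta/2)$-skeleton-cone-point $u_i$ is realized by an actual vertex $\gamma_{k_i}$ of $\gamma$ (the endpoint of the hair $\eta_i$). For any other vertex $\gamma_j$ of $\gamma$, the decomposition places $\gamma_j$ inside a piece attached to some skeleton vertex $u_\ell$, so $\gamma_j=u_\ell+e$ with $\abs{e}\ll K$. Since $\fra_\lambda$ is equivalent to the Euclidean norm (hence Lipschitz), both $\frs_a^h(\gamma_j-\gamma_{k_i})$ and $\fra_\lambda(\gamma_j-\gamma_{k_i})$ differ from their skeleton analogues by $O(\abs{e})$, which is absorbed when passing from aperture $\delta/2$ to $\delta$, provided $K$ is large. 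This produces $\#_{\scriptscriptstyle\rm cone}(\gamma)\geq m/2\geq c\abs{x}/K$ cone-points of $\gamma$, which yields~\eqref{eq:AlBound}. The main obstacle is the Hardy--Littlewood step: the surcharge inequality gives only an $L^1$-type bound on the total skeleton surcharge, whereas the cone-point property is a supremum-type (maximal-function) condition, and a single long interval of high average surcharge can simultaneously spoil many intermediate indices. The standard remedy---a Vitali-type greedy extraction of minimal violating intervals, charging each bad index to a disjoint piece of $\frs_a^h(\hat\gamma_K)$---is the technical heart of the argument.
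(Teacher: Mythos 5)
Your first half follows the same route as the cited proof: use Lemma~\ref{lem:surcharge-annealed} to restrict to skeletons of small surcharge, and then prove a deterministic lemma stating that a skeleton with surcharge $\leq 2\kappa\abs{x}$, $\kappa\ll\delta$, has a positive density of skeleton cone-points; your greedy/Vitali extraction of minimal violating intervals is indeed the standard way to prove that geometric statement, so up to this point the proposal is sound.

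The genuine gap is in the final, ``upgrade'' step, which you treat as deterministic. First, you cannot restrict to the event that \emph{every} hair has displacement $\ll K$ at cost $e^{-c\abs{x}}$: there are $m\asymp\abs{x}/K$ hairs, each with only an exponential tail, so the maximal hair size is typically of order $\log\abs{x}$, which exceeds any fixed scale $K$; only the total hair size (or the number of hairs exceeding a given size) can be controlled with exponentially small error. Second, and more fundamentally, even on the event that hairs are small, a skeleton cone-point $u_i$ does \emph{not} produce a cone-point of $\gamma$ by a purely geometric absorption argument. Condition~(c) only confines the piece $\gamma_{i+1}$ to the ball $K U_\lambda(u_i)$; it may backtrack anywhere inside that ball, so vertices $\gamma_j$ at distance $O(1)$ or $O(K)$ from the would-be apex can lie outside $\gamma_{k_i}+\fcone$ regardless of how small the skeleton surcharge is. Your estimate ``$\frs_a^h(\gamma_j-\gamma_{k_i})$ and $\fra_\lambda(\gamma_j-\gamma_{k_i})$ differ from their skeleton analogues by $O(\abs{e})$, absorbed by widening the aperture'' only works when $\fra_\lambda(\gamma_j-\gamma_{k_i})$ is large compared to the error, which fails precisely for the vertices adjacent to the apex and for all vertices in the two pieces attached to $u_i$ (there the ``skeleton analogue'' is $0$ or of size exactly $K$, while the error is up to $K$). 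This is why the reference introduces the nested cones $\fcone\subset\dfcone\subset\tfcone$: the passage from skeleton cone-points to path cone-points is a second \emph{probabilistic} estimate, showing that the $a_\lambda$-cost for the local pieces and hairs around a skeleton cone-point to exit the wider cone is exponentially small in the exit scale, so that, outside an event of probability $e^{-C\abs{x}}$, at most half of the skeleton cone-points can fail to be witnessed by actual cone-points of $\gamma$. Without an estimate of this kind (energy--entropy over the locations and scales of the local violations), the bound~\eqref{eq:AlBound} does not follow from the skeleton statement alone.
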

\begin{remark}
By \eqref{eq:aMGF} and Theorem~\ref{thm:LD},
there exist $\alpha\in [1, \infty )$ and $c^\prime  > 0$ such that
\begin{equation}  
\label{eq:AhDecomp} 
1 \asymp e^{- \lambda n} A_n (h) = 
\sumtwo{\alpha^{-1}n \leq |x| \leq \alpha n}{\frs_a^h(x) \leq \delta^\prime \fra_\lambda(x)}
 A_{\lambda ,n} (x)e^{h\cdot x} + 
\smo{e^{-c^\prime n }} , 
\end{equation}
as $n$ becomes large. It follows that  sets of paths which are uniformly exponentially improbable under the $\bbA^x_\lambda$-measures will remain exponentially improbable under $\bbA_n^h$.  In particular, \eqref{eq:AlBound} implies that there exist $c,C>0$, depending only on $d,\beta,\delta$ and $h$ such that
\begin{equation}  
\label{eq:AhBound}
\bbA_n^h \bigl( \#_{\scriptscriptstyle\rm cone}(\gamma) < c n ) \leq e^{-C n},
\end{equation}
uniformly in $n$ sufficiently large.
\end{remark}

With the help of~\eqref{eq:AhBound}, we can decompose typical ballistic paths into a string of irreducible pieces. A path $\gamma=(\gamma_0,\ldots,\gamma_n)$ is said to be backward irreducible if $\gamma_n$ is the only cone-point of $\gamma$. Similarly, $\gamma$ is said to be forward irreducible  if $\gamma_0$ is the only cone-point of $\gamma$. Finally, $\gamma$ is said to be irreducible if $\gamma_0$ and $\gamma_n$ are the only cone-points of $\gamma$. We denote by $\calF^>(y)$, $\calF^<(y)$ and $\calF(y)$ the corresponding sets of irreducible paths connecting $0$ to $y$.

In view of~\eqref{eq:AhBound}, we can restrict our attention to paths $\gamma$ possessing at least $c^\prime n$ cone-points, at least when $n$ is sufficiently large. We can then unambiguously decompose $\gamma$ into irreducible sub-paths:
\begin{equation}
\label{eq:DecompositionIntoIrreduciblePieces}
\gamma = \omega_{\rm >} \cup \omega_1 \cup \cdots \cup \omega_m \cup \omega_{\rm <}.
\end{equation}
We thus have the following expression
\begin{equation}
\label{eq:DecompositionZx}
e^{- \lambda n} A_n (h )
= 
\sum_{m\geq c^\prime n}\, \sum_{\omega_{>}\in\calF^>}
\sum_{\omega_1, \dots \omega_m\in\calF}
\sum_{\omega_<\in\calF^<}
a_{\lambda,h}(\gamma)
\1_{\{\abs{\gamma}=n\}}
+
O\bigl( e^{- Cn } \bigr) \,.
\end{equation}
Observe now that the weight $a_{\lambda,h}(\gamma)$ of a path $\gamma$ can be nicely factorized over its irreducible components (see~\eqref{eq:DecompositionIntoIrreduciblePieces}):
\[
a_{\lambda,h}(\gamma) = 
a_{\lambda,h}(\omega_>)\, 
a_{\lambda,h}(\omega_<)\, \prod_{i=1}^m 
a_{\lambda,h}(\omega_i),
\]

\smallskip
\noindent
Similarly, Lemma~\ref{lem:surcharge-quenched} implies:
\begin{theorem}
\label{thm:QConePoints}
Let $\lambda > 0$ and $h\in\partial\Kql{\lambda}$. 
There exist $c,C>0$, depending only on $d,\beta,\delta$ and $h$ such that
\begin{equation}  
\label{eq:QhBound}
\bbQ_n^h \bigl( \#_{\scriptscriptstyle\rm cone}(\gamma) < c n ) \leq e^{-C n},
\end{equation}
$\bbP$-a.s., uniformly in $n$ sufficiently large. In particular, using the same notation~\eqref{eq:DecompositionIntoIrreduciblePieces} for the irreducible decomposition of $\gamma$, 
\begin{equation}
\label{eq:DecompositionZQx}
e^{- \lambda n} Q_n (h )
= 
\sum_{m\geq c^\prime n}\, 
\sum_{\omega_{>}\in\calF^>}
\sum_{\omega_1, \dots \omega_m\in\calF}
\sum_{\omega_<\in\calF^<}
q_{\lambda,h}(\gamma)
\1_{\{\abs{\gamma}=n\}}
+
O\bigl( e^{- Cn } \bigr) \,.
\end{equation}
$\bbP$-a.s., for all $n$ large enough.
\end{theorem}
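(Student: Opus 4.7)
My strategy is to mimic the proof of Theorem~\ref{thm:ConePoints} in the annealed case, replacing the deterministic surcharge bound of Lemma~\ref{lem:surcharge-annealed} by its quenched counterpart (Lemma~\ref{lem:surcharge-quenched}) and upgrading every estimate to a $\bbP$-a.s.\ statement via a Borel--Cantelli argument. The proof decomposes into three natural stages: (i) a conjugate-ensemble reduction from $\bbQ_n^h$ to $\bbQ_\lambda^x$; (ii) a deterministic geometric extraction of a positive density of skeleton cone-points from any low-surcharge skeleton; (iii) a comparison with a killed random walk to promote skeleton cone-points to genuine cone-points of $\gamma$, plus absorption of the skeleton entropy.

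\textbf{Stage (i): Conjugate reduction.} Since $e^{-\lambda n} Q_n(h) = \sum_x e^{h\cdot x} Q_{\lambda,n}(x)$, I would combine~\eqref{eq:qMGF} with the lower bound~\eqref{eq:qlBound} and the exponential tightness of $X(\gamma)/n$ to obtain, $\bbP$-a.s.\ and in parallel with~\eqref{eq:AhDecomp},
\[
e^{-\lambda n} Q_n(h) \;\asymp\; \sumtwo{\alpha^{-1}n\leq |x|\leq \alpha n}{\frs_q^h(x)\leq \delta'\frq_\lambda(x)} e^{h\cdot x}\,Q_{\lambda,n}(x) + \smo{e^{-c'n}},
\]
for suitable $\alpha,c',\delta'>0$ depending on $h$ and $\lambda$. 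Consequently it is enough to prove that $\bbQ_\lambda^x\bigl(\#_{\scriptscriptstyle\rm cone}(\gamma) < c|x|\bigr) \leq e^{-C|x|}$, $\bbP$-a.s.\ uniformly in sufficiently large $x$ with $\frs_q^h(x)\leq \delta'\frq_\lambda(x)$.

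\textbf{Stage (ii): Skeleton cone-points.} Fix a large renormalization scale $K$ and apply Lemma~\ref{lem:surcharge-quenched} with $\epsilon\ll\delta$: $\bbP$-a.s., all but an exponentially negligible $\bbQ_\lambda^x$-fraction is carried by \emph{good} skeletons $\hat\gamma_K=(0,v_1,u_1,\ldots,v_m,u_m=x)$ whose surcharge does not exceed $2\epsilon|x|$. Call $u_i$ a \emph{skeleton cone-point} if $u_j-u_i\in\dfcone$ for every $j>i$ and $u_i-u_j\in\dfcone$ for every $j<i$. A purely deterministic argument, identical to the annealed one worked out in~\cite{IoffeVelenik-Annealed}, shows that on a good skeleton at least $c_1|x|/K$ vertices are skeleton cone-points: each non-skeleton-cone vertex forces at least one skeleton step of surcharge proportional to $\delta K$, and the total surcharge is bounded by $2\epsilon|x|$, so the number of bad vertices is at most $O(\epsilon|x|/(\delta K))$.

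\textbf{Stage (iii): From skeleton to $\gamma$, and conclusion.} Since $V\geq 0$, the quenched weights $q_\lambda$ are dominated uniformly in the environment by those of a simple random walk killed at rate $\lambda>0$; a standard first-passage bound (Section~2.2 of~\cite{IoffeVelenik-Annealed}) then gives $\bbQ_\lambda^x(\max_i|\eta_i|>r)\leq m\,e^{-c_2 r}$ uniformly in $\omega$. Choosing $1\ll r\ll K$, the closest $\gamma$-vertex to each skeleton cone-point $u_i$ lies within the wider cones $\fcone$ and $\bcone$ of every other skeleton cone-point, and is therefore a cone-point of $\gamma$ itself. This converts the $c_1|x|/K$ skeleton cone-points into at least $c|x|$ genuine cone-points. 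The $\bbP$-a.s.\ uniformity in $x$ large follows by Borel--Cantelli, using that the entropy $\exp(c_3|x|\log K/K)$ of moderate skeletons, as in Lemma~\ref{lem:SkBound}, is dominated by the exponential decay produced by Lemmas~\ref{lem:surcharge-quenched} and~\ref{lem:FConc} once $K$ is large. The decomposition~\eqref{eq:DecompositionZQx} is then immediate: on the event $\#_{\scriptscriptstyle\rm cone}(\gamma)\geq c'n$, the irreducible factorisation~\eqref{eq:DecompositionIntoIrreduciblePieces} is well defined with $m\geq c'n$ pieces, and the complementary event contributes $O(e^{-Cn})$ by~\eqref{eq:QhBound}.

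\textbf{Main obstacle.} No individual step is genuinely hard: the key probabilistic content is packaged in Lemma~\ref{lem:surcharge-quenched}, the geometry is inherited verbatim from~\cite{IoffeVelenik-Annealed}, and the hair control is a crude killed-walk bound that is robust in $V$. The delicate bookkeeping point is the uniformity of the $\bbP$-a.s.\ bounds over all relevant directions $x/|x|$ and dual vectors $h\in\partial\Kql{\lambda}$ satisfying $h\cdot x=\frq_\lambda(x)$; this is ultimately resolved by covering $\partial\Kql{\lambda}$ by finitely many pieces and exploiting the convexity of $\Kql{\lambda}$ to transfer the estimate from the dual $h$ to a neighborhood of admissible directions, after which Borel--Cantelli applies to a countable family of events.
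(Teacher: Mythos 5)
Your proposal is correct and follows essentially the same route as the paper, which simply states that Theorem~\ref{thm:QConePoints} follows from the quenched surcharge inequality (Lemma~\ref{lem:surcharge-quenched}) by the same argument that derives Theorem~\ref{thm:ConePoints} from Lemma~\ref{lem:surcharge-annealed}; your three stages (conjugate reduction via~\eqref{eq:qMGF} and~\eqref{eq:qlBound}, geometric extraction of skeleton cone-points, hair control by comparison with the killed walk, plus Borel--Cantelli over the moderate-skeleton entropy) are exactly the implicit steps. One small slip: you define skeleton cone-points with the \emph{wider} cone $\dfcone$ and then try to deduce genuine cone-points for the \emph{narrower} $\fcone$ after adding hair fluctuations, which runs in the wrong direction --- the standard device (and the one used in~\cite{IoffeVelenik-Annealed}) is to extract skeleton cone-points for a \emph{narrower} cone and then absorb the $O(K)$ skeleton-to-path displacement and the hair wiggle to conclude genuine cone-points for the wider $\fcone$; once the cones are swapped your argument is exactly the intended one.
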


\subsection{Basic Partition Functions.} 
Let us say that a path $\gamma:0\to x$ is cone-confined if $\gamma\subset\fcone\cap(x+\bcone)$.
Let $\calT (x)\subset \calD (x)$ be the collection of all cone-confined paths leading from $0$ to $x$, and let $\calF (x)\subset\calT (x)$ be the collection of all irreducible cone-confined paths. Clearly $\omega_{>} = \omega_{<} =\emptyset$ in the irreducible decomposition~\eqref{eq:DecompositionIntoIrreduciblePieces} of paths $\gamma\in \calT (x)$. Let $\lambda >0$ and $h\in\partial \Kal{\lambda}$. We define the following in general unnormalized quenched partition functions,
\begin{equation}  
\label{eq:weights}
\tq{x ,n} \df \sum_{\gamma\in\calT (x)}\1_{\lbr \abs{\gamma} =n\rbr} q_{\lambda ,h}
\ \ {\rm and}\ \ 
\fq{x ,n} \df \sum_{\gamma\in\calF (x)}\1_{\lbr \abs{\gamma} =n\rbr} q_{\lambda ,h} .
\end{equation}
Their annealed counterparts are denoted by $\ta{x,n} \df \bbE\tq{x, n}$ and $\fa{x,n } \df \bbE\fq{x, n}$. 
As we shall see below, $\lbr \fa{x ,n}\rbr$ is normalized - it is a probability distribution,
\[
\sum_{n}\sum_{x}\fa{x, n} \df \sum_n \fa{n} = 1 .
\]
with exponentially decaying tails. The tails are exponential by Theorem~\ref{thm:ConePoints}. The probabilistic
normalization is explained in Subsection~\ref{sub:renewal}. 

For $n\geq 1$, let $\ta{n}\df\sum_x\ta{x,n}$ and $\fa{n}\df\sum_x\fa{x,n}$, and set $\ta{0}\df 1$. The irreducible decomposition~\eqref{eq:DecompositionIntoIrreduciblePieces} of paths imply the following renewal-type relations for $\ta{n}$ and for $\tq{x,n}$:
\begin{equation}
\label{eq:renewal}
\ta{n} = \sum_{m=0}^{n-1} \ta{m}\fa{n-m},\qquad
\tq{x,n} = \sum_{m=0}^{n-1}\sum_y \tq{y,m} f^{\theta_y\omega}_{x-y,n-m}.
\end{equation}
For the rest of the paper we shall work mainly with the above  basic ensembles of paths.  All the results can be routinely extended (as in, e.g.,\cite{IoffeVelenik-AOP}) to general ensembles  by summing out over the paths $\omega_{<}$ and $\omega_{>}$ paths in the irreducible decomposition~\eqref{eq:DecompositionIntoIrreduciblePieces}, their weights being exponentially decaying.

\section{The Annealed Model}
\subsection{Asymptotics of $\ta{n} = \sum_x \ta{x, n}$.}
\label{sub:renewal}
Annealed asymptotics are not related to the strength of disorder and hold for all values of $\beta\geq 0$. 
Neither do they require any moment assumptions on $V$. 
\begin{lemma}
\label{lem:tanAsympt}
Let $\lambda >0$ and $h\in\partial\Kal{\lambda}$. There exists $\kappa = \kappa (\lambda ,h )$ such that
\begin{equation}  
\label{eq:tanAsympt}
\lim_{n\to\infty} \ta{n} = \Bigl\{ \sum_n n\fa{n}\Bigr\}^{-1} \df \frac{1}{\kappa}
\end{equation}
exponentially fast. 
\end{lemma}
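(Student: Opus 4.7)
The plan is to view \eqref{eq:renewal} through the lens of generating functions and derive the exponential-rate version of Kendall's classical renewal theorem. Introduce
\[
 T(z) \df \sum_{n\geq 0} \ta{n}\, z^n \quad\text{and}\quad F(z) \df \sum_{n\geq 1}\fa{n}\, z^n,
\]
so that the convolution identity $\ta{n} = \sum_{m<n} \ta{m}\fa{n-m}$ together with $\ta{0}=1$ yields $T(z) = 1/(1-F(z))$. Everything then reduces to showing three facts: (i) $F(1)=1$, (ii) $F$ extends analytically across $|z|=1$ with $F'(1) = \kappa \in (0,\infty)$ being a simple zero of $1-F$, and (iii) no other zeros of $1-F$ exist inside a slightly larger disk.

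For (i) the strategy is a squeeze. Since $\calT(x)\subset\calD(x)$, one has $\ta{n} \leq e^{-\lambda n} A_n(h)$, which is $O(1)$ by \eqref{eq:aMGF} as $h\in\partial\Kal{\lambda}$. In the opposite direction, Theorem~\ref{thm:ConePoints} together with the decomposition \eqref{eq:AhDecomp} shows that the contribution of cone-confined paths without long ``heads'' and ``tails'' is a positive fraction of $e^{-\lambda n} A_n(h)\asymp 1$; summing out the exponentially decaying $\omega_>,\omega_<$ components of \eqref{eq:DecompositionIntoIrreduciblePieces} gives $\ta{n}\geq c>0$. Combining the two bounds: if $F(1)<1$ then $T(1)<\infty$, forcing $\ta{n}\to 0$; if $F(1)>1$ then $1-F$ has a zero in $(0,1)$ and $\ta{n}$ grows exponentially. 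Both contradict $\ta{n}\asymp 1$, so $F(1)=1$, and the same lower bound interpretation explains the probabilistic normalization of $\{\fa{x,n}\}$.

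For (ii), the exponential tail $\fa{n}\leq C e^{-c n}$, obtained directly from the cone-point density estimate of Theorem~\ref{thm:ConePoints} applied to irreducible paths, means $F$ is holomorphic on $\{|z|<e^{c}\}$. Consequently $\kappa = F'(1) = \sum_n n\fa{n}\in(0,\infty)$, making $z=1$ a simple zero of $1-F$. For (iii), aperiodicity is the point that usually requires care; here it is inherited from the simple-random-walk reference measure: $\fa{x,1}>0$ whenever $x$ is a nearest neighbor inside the forward cone, and by considering short concatenations of such steps one obtains $\fa{n}>0$ for all $n$ sufficiently large and hence $|F(z)|<F(|z|)=F(1)=1$ for $|z|=1$, $z\neq 1$. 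A compactness argument on the unit circle together with continuity in $|z|$ then produces a radius $\rho\in(1,e^c)$ with $F(z)\neq 1$ throughout $\{1\leq |z|\leq \rho,\ z\neq 1\}$.

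Given (i)--(iii), one decomposes
\[
 T(z) = \frac{1}{1-F(z)} = \frac{-1/\kappa}{z-1} + H(z),
\]
with $H$ holomorphic in $\{|z|<\rho\}$. Extracting the $n$-th Taylor coefficient by contour integration on $|z|=\rho'$, any $\rho'\in(1,\rho)$, yields $\ta{n} = 1/\kappa + O((\rho')^{-n})$, which is the desired conclusion. The main obstacle I anticipate is (iii): ruling out additional zeros of $1-F$ on and near the unit circle demands a genuine aperiodicity input about irreducible cone-confined paths, rather than the mere exponential tail estimate, and one must be slightly careful because the walk lives on $\Zd$ while $n$ plays the role of ``time''—the correct nondegeneracy is a time-aperiodicity of the step distribution of the irreducible increments, which is straightforward here but is the only non-mechanical ingredient beyond what is already proved in Sections~2--3.
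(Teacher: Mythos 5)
Your proposal is correct and follows essentially the same route as the paper's own proof: the generating functions $\hat{\mathbf t}[u]$, $\hat{\mathbf f}[u]$, the renewal identity $\hat{\mathbf t}[u]=(1-\hat{\mathbf f}[u])^{-1}$ from \eqref{eq:renewal}, exponential decay of $\fa{n}$ via Theorem~\ref{thm:ConePoints}, the normalization $\hat{\mathbf f}[1]=1$ forced by the fact that $\ta{n}$ neither grows nor decays exponentially (your squeeze via \eqref{eq:AhDecomp} is the same input the paper uses when asserting that $\hat{\mathbf t}$ blows up for real $u>1$), identification $\kappa=\hat{\mathbf f}^\prime[1]$, and extraction of $1/\kappa$ from the simple pole at $u=1$ by Cauchy's formula, exactly as in \eqref{eq:htaFormula}. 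The one point where you go beyond the paper is your item (iii), the absence of further zeros of $1-\hat{\mathbf f}$ in a disc of radius larger than $1$; the paper absorbs this silently into the claim that $\Delta[u]$ is analytic on $\bbD_{1+\nu^\prime}$, and you are right that it is the only non-mechanical ingredient. However, the specific justification you sketch is off: a nearest-neighbour step need not lie in the forward cone $\fcone$ at all (for small $\delta$, or for $h$ far from a lattice direction, every unit step can have surcharge exceeding $\delta\fra_\lambda$), so $\fa{x,1}$ may vanish identically; and concatenations of such steps are in any case \emph{not} irreducible, since the concatenation points are interior cone points, so they say nothing about positivity of $\fa{n}$ for larger $n$. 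What is needed is aperiodicity of the support of $\lbr n:\fa{n}>0\rbr$, which can be obtained, e.g., by exhibiting irreducible cone-confined paths whose lengths have greatest common divisor $1$: endpoints $y$ deep inside the cone with both parities of $\|y\|_1$ give lengths of both parities, and inserting a local two-step detour in the interior of such a path (which creates no new cone points and stays inside the diamond) gives two admissible lengths differing by $2$; together these force the gcd to be $1$, after which $\hat{\mathbf f}[z]\neq 1$ on $\lbr |z|=1,\ z\neq 1\rbr$ and a compactness argument produce the radius $\rho>1$, exactly as you describe.
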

\begin{proof}
For $\abs{u}\leq 1$, define the generating functions
\[
\hat{\mathbf t} [u] \df \sum_{n=0}^\infty  u^n \ta{n}\ \ {\rm and}\ \ 
\hat{\mathbf f} [u] \df \sum_{n=1}^\infty  u^n \fa{n} .
\]
It follows from Theorem~\ref{thm:ConePoints} that the second series converges on some disc $\bbD_{1+\nu} = \setof{u\in\bbC}{\abs{u} < 1+\nu}$. The first series blows up at any  $\bbR\ni u >1$. 
Since by \eqref{eq:renewal}, $\hat{\mathbf t} [u] = \lb 1- \hat{\mathbf f} [u] \rb ^{-1}$, 
it follows that $\hat{\mathbf f}  [1] =1$, which is the probabilistic normalization mentioned above. We identify $\kappa$ in~\eqref{eq:tanAsympt} as $\kappa = \hat{\mathbf f}^\prime [1]$. It then follows from the renewal relation~\eqref{eq:renewal} that
\begin{equation}  
\label{eq:htaFormula}
\hat{\mathbf t} [u]  = \frac{1}{1 - \hat{\mathbf f} [u] } = 
\frac{1}{\kappa (1-u)} 
+ \frac{\hat{\mathbf f} [u]  -\hat {\mathbf  f } [1] - \kappa (u-1 )}{\kappa 
(1- \hat{\mathbf f} [u])
(1-u)} \df 
\frac{1}{\kappa (1-u)}  +\Delta  [u] .
\end{equation} 
Since the function $\Delta$ is analytic on some disc $\bbD_{1+\nu^\prime}$, the claim follows from Cauchy's formula. 
\qed\end{proof}

\subsection{Geometry of $\Kal{\lambda}$, annealed  LLN and CLT.}
\label{sub:Geometry}
Let $\lambda >0$ and $h\in \partial \Kal{\lambda}$.
In the ballistic phase of the annealed model, the CLT is obtained on the level of a local limit description: Given $z\in\bbC$, let us try to find $\mu = \mu (z)\in \bbC$ such that
\begin{equation}  
\label{eq:muz}
{\mathbf F}(z ,\mu )\df 
\sum_{n, x} e^{-\mu n + z\cdot x}\fa{x ,n} = 1 .
\end{equation}
Since $\lbr \fa{n}\rbr$ has exponential decay, the implicit function theorem implies that
\begin{lemma}
\label{lem:muz}
There exist $\delta, \eta >0$ and an analytic function $\mu$ on $\bbD_{\delta}^d$ such that 
\begin{equation}  
\label{eq:muanalytic}
\setof{(z ,\mu )\in \bbD_\delta^d \times\bbD_\eta}{{\mathbf F} ( z, \mu ) =1} = 
\setof{(z ,\mu )\in \bbD_\delta^d \times\bbD_\eta}{ \mu = \mu (z) } .
\end{equation}
Moreover, ${\rm Hess} [\mu ] (0 )$ is non-degenerate. 
\end{lemma}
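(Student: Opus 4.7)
The plan is to apply the analytic (holomorphic) implicit function theorem to $\mathbf{F}(z,\mu)=1$ at $(z,\mu)=(0,0)$, and then to compute $\mathrm{Hess}[\mu](0)$ by implicit differentiation, recognising it as a covariance matrix whose non-degeneracy follows from the variety of irreducible cone-confined paths.

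First I would establish that $\mathbf{F}$ is jointly holomorphic in a polydisc around the origin. Since every path contributing to $\mathbf{f}_{x,n}$ is cone-confined, one has $|x|\leq C n$ uniformly for some $C=C(\delta,\lambda)<\infty$; combined with the exponential tail $\mathbf{f}_n\leq e^{-cn}$ afforded by Theorem~\ref{thm:ConePoints}, the series defining $\mathbf{F}(z,\mu)$ converges absolutely and uniformly on a polydisc about $(0,0)$. Moreover $\mathbf{F}(0,0)=\hat{\mathbf{f}}[1]=1$ and
\[
\partial_\mu \mathbf{F}(0,0)\;=\;-\sum_n n\,\mathbf{f}_n\;=\;-\kappa\;\neq\;0
\]
by Lemma~\ref{lem:tanAsympt}. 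The analytic implicit function theorem then produces $\delta,\eta>0$ and a unique holomorphic map $\mu\colon\bbD_\delta^d\to\bbD_\eta$ with $\mu(0)=0$ satisfying the description~\eqref{eq:muanalytic}.

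Since $\mathbf{F}(0,0)=1$, the weights $\nu(x,n):=\mathbf{f}_{x,n}$ form a probability distribution on $\Zd\times\bbN$. Writing $\bbE_\nu$ for the corresponding expectation and differentiating the identity $\mathbf{F}(z,\mu(z))\equiv 1$ once and twice at $z=0$ gives
\[
\nabla\mu(0)\;=\;\alpha\;:=\;\bbE_\nu X/\bbE_\nu N,\qquad
\mathrm{Hess}[\mu](0)\;=\;\frac{1}{\kappa}\,\bbE_\nu\!\left[(X-\alpha N)(X-\alpha N)^T\right],
\]
i.e.\ $\kappa^{-1}$ times the covariance matrix of the centered random vector $X-\alpha N$.

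The hard part I expect is the final non-degeneracy claim, namely that no nonzero $c\in\bbR^d$ satisfies $c\cdot X=(c\cdot\alpha)N$ on the support of $\nu$. Here I would exploit the openness of the cone $\fcone$ together with the freedom, since $\lambda>0$, to insert controlled local detours: one exhibits irreducible cone-confined paths whose endpoints point in $d$ linearly independent directions of $\mathrm{int}(\fcone)$, yielding a family of $(X,N)$ values whose spatial/length ratios $X/N$ span $\bbR^d$. A nonzero $c$ annihilating $X-\alpha N$ on the support would then force $c\cdot(X/N)=c\cdot\alpha$ for $d$ linearly independent values of $X/N$, forcing $c=0$. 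This shows that $\mathrm{Hess}[\mu](0)$ is strictly positive definite, concluding the proof. \qed
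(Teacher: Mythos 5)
Your overall approach -- joint holomorphy of $\mathbf{F}$ near $(0,0)$ from the exponential decay of $\lbr\fa{n}\rbr$ (Theorem~\ref{thm:ConePoints}), the computation $\partial_\mu\mathbf{F}(0,0)=-\sum_n n\fa{n}=-\kappa\neq 0$, and the analytic implicit function theorem -- is exactly the paper's route (the paper devotes only a single preparatory sentence to this lemma and gives no details at all on the Hessian). Your implicit-differentiation formulas $\nabla\mu(0)=\bbE_\nu X/\bbE_\nu N$ and ${\rm Hess}[\mu](0)=\kappa^{-1}\bbE_\nu\bigl[(X-\alpha N)(X-\alpha N)^T\bigr]$ are also correct, and since $\bbE_\nu[X-\alpha N]=0$, degeneracy of this covariance matrix is indeed equivalent to the existence of $c\neq 0$ with $c\cdot X=(c\cdot\alpha)N$ on the support of $\nu$.

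The final non-degeneracy step as written, however, does not close. From $c\cdot(X/N)=c\cdot\alpha$ holding at $d$ linearly independent values $v_1,\ldots,v_d$ of the ratio $X/N$ you cannot conclude $c=0$: linearly independent vectors may still lie on a common affine hyperplane (e.g.\ $e_1,\ldots,e_d$ all satisfy $\sum_i x_i=1$), so a one-dimensional family of nonzero $c$ survives. What you actually need is that the set $\lbr X/N\rbr$ over the support of $\nu$ is not contained in \emph{any} affine hyperplane of $\bbR^d$. The ingredient you mention only in passing -- inserting local detours, possible because $\lambda>0$ and the cone is open -- is precisely what repairs this. A small loop turns an irreducible cone-confined path with data $(x,n)$ into another with data $(x,n+2)$; imposing $c\cdot x=(c\cdot\alpha)n$ and $c\cdot x=(c\cdot\alpha)(n+2)$ simultaneously forces $c\cdot\alpha=0$, hence $c\cdot x=0$ for every $(x,n)$ in the support, and then linear independence of the achievable displacements $x$ gives $c=0$. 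You should reorganise the argument so that the length-shift is used first to obtain $c\cdot\alpha=0$, with directional spanning of the displacements invoked only afterwards.
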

If $z$ is real and $|z |$ is sufficiently small, then $\mu (z) = \Lambda_a (h+z ) -\lambda$. 
Indeed, if $|z |$ is small, then the leading contribution to $\sum_n e^{-(\mu +\lambda) n} A_n (h +z )$ 
is still coming from 
\[
\sum_n  \sum_{x } \ta{x ,n}\,  { e}^{- \mu n + z\cdot x}
\]
By \eqref{eq:renewal} and \eqref{eq:muz}, 
 $\mu = \mu (z )$ describes the radius of convergence of the latter series,
whereas  $\mu +\lambda =   \Lambda_a (h+z ) $ descibes the radius of convergence 
of the former one.

 Therefore, $\partial\Kal{\lambda}$ is locally 
given by the level set $\setof{ h+z}{\mu (z ) = 0}$.  
In addition $\Lambda_a$ inherits analyticity and non-degeneracy properties of $\mu$:
\begin{equation} 
\label{eq:muDerivatives}
\nabla \mu (0 ) = \nabla
\Lambda_a  (h) 
\df v 
= v^a (h ,\beta )
 \ \ \text{and define}\ \ {\rm Hess} [\mu ] (0 ) = 
{\rm Hess }[\Lambda_a ] (h) 
\df\Xi^{-1}  .
\end{equation}
Given $z\in\bbD_\delta$ as above, define 
\[
\fa{x, n}(z)  \df \fa{x, n}e^{-\mu (z) n
+z\cdot x}\ \ \text{and, respectively,}\ \  
\ta{x ,n}(z)  \df \ta{x, n}e^{-\mu (z) n
+z\cdot x}. 
\]
Set  $\fa{n} (z) = \sum_x\fa{x,n} (z),\, \ta{n} (z) = \sum_x\ta{x,n} (z)$ 
and 
$\kappa (z)^{-1} =  \sum_n n\fa{n }(z)$. Literally  repeating the derivation of~\eqref{eq:tanAsympt}, we infer that there exists $\alpha >0$ such that, uniformly in $z\in\bar{\bbD}_\delta^d$, 
\begin{equation}  
\label{eq:tanAsymptz}
\left| \ta{n} (z) - \frac1{\kappa (z)}\right| \leq e^{-\alpha n} .
\end{equation}
By Cauchy's formula $\nabla\log \ta{n} (z) = {\rm O}(1)$.  Therefore, 
\begin{equation}
{\rm O}\lb \frac{1}{n}\rb =  \frac1{n}\nabla\log \ta{n} (0) = -\nabla\mu (0 ) + \frac{1}{n}
\sum_{x}x \frac{\ta{x,n}}{\ta{n}}  = -v +\frac{1}{n}\sum_{x}x \frac{\ta{x,n}}{\ta{n}} .
\label{eq:A-LLN}
\end{equation}
\eqref{eq:A-LLN} is an annealed law of large numbers
which, in particular, identifies $v$ as the limiting macroscopic spatial extension. 

Next, the following form of the annealed CLT holds: for any $\alpha\in\bbR^d$,
\begin{equation}  
\label{eq:AnCLT}
\begin{split} 
\frac{\aS{n} (\alpha  )}{\ta{n}} & \df 
\sum_{x} \frac{\ta{x ,n}}{\ta{n}} \exp\bigl\{ i \alpha \cdot \frac{x - nv }{\sqrt{n}}\bigr\}  = 
\frac{\ta{n}\bigl( \frac{i\alpha }{\sqrt{n}}\bigr)}{\ta{n}} \exp\Bigl\{  n\mu \bigl( 
\frac{i\alpha }{\sqrt{n}}\bigr)  - n v\cdot \frac{i\alpha }{\sqrt{n}}\Bigr\}\\  
&= 
\exp\Bigl\{ -\frac12 \Xi^{-1}\,\alpha\cdot\alpha  \Bigr\} \bigl( 1 + O(n^{-1/2 }) \bigr) ,
\end{split} 
\end{equation}
with the second asymptotic equality holding uniformly in $\alpha$ on compact subsets of $\bbR^d$.  

\subsection{Local limit theorem for the annealed polymer.}
\label{ssec:OZ}
In this Subsection we shall explain the local limit picture behind 
\eqref{eq:A-LLN} and \eqref{eq:AnCLT}.  
Recall that
\begin{align}
\ta{x,n} &= \sum_{m=0}^{n-1}\sum_y \ta{y,m} \fa{x-y,n-m}\nonumber\\
&= \sum_{N\geq 1}\, \sum_{
m_1, \dots ,m_N \geq 1
}\, \sum_{y_1,\dots ,y_N\in\bbZ^d} \1_{\{\sum m_i = n,  \sum y_i = x\}} \prod_{i=1}^N \fa{y_i-y_{i-1},m_i},
\label{eq:expandedrenewal}
\end{align}
where we have set, for convenience, $y_0=0$.
As explained above, the weights $\fa{y,m}$ form a probability distribution on $\bbZ^d\times\bbN$,
\[
\sum_{y\in\bbZ^d,\, m\in\bbN} \fa{y,m} = 1,
\]
and decay exponentially both in $y$ and $m$.
Let us consider an i.i.d. sequence of random vectors $(Y_k,M_k)_{k\geq 1}$ whose joint distribution $\Peff$ is given by these weights. Then~\eqref{eq:expandedrenewal} can be expressed as
\[
\ta{x,n} = \sum_{N\geq 1}\Peff\bigl(\sum_{i=1}^N (Y_i,M_i) = (x,n)\bigr).
\]
Consequently, sharp asymptotics for $\ta{x,n}$ readily follow from a local limit analysis of the empirical mean of the i.i.d. random vectors $(Y_k,M_k)_{k\geq 1}$ with exponential tails. In this way, one obtains the following sharp asymptotics for the extension of an annealed polymer, covering all possible deviation scales.
\begin{theorem}\cite{IoffeVelenik-Annealed}
Suppose that $h\not\in\Ka$. Let $v_h=\nabla\Lambda_a(h)$. Then, for some small enough $\epsilon>0$, the rate function $J_h^a$ is real analytic and strictly convex on the ball $B_\epsilon (v_h)\df \setof{u}{|u-v_h|<\epsilon}$ with a non-degenerate quadratic minimum at $v_h$. Moreover, there exists  a strictly positive real analytic function $G$ on $B_\epsilon(v_h)$ such that
\begin{equation}
\label{BeAsympt}
\bbA_n^h \bigl( \frac{X(\gamma)}{n} = u \bigr) =
\frac{G(u)}{\sqrt{n^d}}\, e^{-n J_h^a (u)} \bigl( 1+ o(1) \bigr) ,
\end{equation}
uniformly in $u\in B_\epsilon(v)\cap\tfrac1n\Zd$.
\end{theorem}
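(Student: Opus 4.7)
The plan is to reduce the claim to a sharp local limit theorem for an i.i.d.\ random walk on $\bbZ^d \times \bbN$ with exponential tails, thereby upgrading~\eqref{eq:tanAsymptz} to an $x$-resolved statement. Specifically, I want to use the renewal representation
\[
\ta{x,n} = \sum_{N\geq 1}\Peff\bigl(S_N = (x,n)\bigr),
\qquad S_N = \sum_{i=1}^N (Y_i,M_i),
\]
derived from~\eqref{eq:expandedrenewal}, where $(Y_i,M_i)$ are i.i.d.\ with joint law $\Peff = \{\fa{y,m}\}$, and then pass from $\ta{x,n}$ to $\bbA_n^h$ via the irreducible decomposition~\eqref{eq:DecompositionZx}.

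\textbf{Step 1: Reduction to $\ta{x,n}$.} Using \eqref{eq:DecompositionZx}, the factorization of $a_{\lambda,h}(\gamma)$ over irreducible pieces, and the exponential tails of the boundary partition functions over $\omega_>,\omega_<$, I would show that
\[
A_{\lambda,n}(x)\, e^{h\cdot x - \lambda n}
= \Psi_h(x,n)\,\ta{x,n}\,\bigl(1+\smo{1}\bigr)
\]
uniformly in $x$ with $x/n$ close to $v_h$, where $\Psi_h$ is a strictly positive, real-analytic function of the endpoint direction obtained by summing out the boundary irreducible legs. Dividing by $A_n(h)$ (and using the $\smo{1}$ estimate in~\eqref{eq:AhDecomp}) reduces the target asymptotics to sharp asymptotics for $\ta{x,n}$.

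\textbf{Step 2: Tilting and Legendre duality.} For $u \in B_\epsilon(v_h)$, I would apply the implicit function theorem to $\nabla \mu(z) = u$ to produce a real-analytic map $u \mapsto z(u)$ with $z(v_h)=0$; this is possible because $\mathrm{Hess}[\mu](0) = \Xi^{-1}$ is non-degenerate by Lemma~\ref{lem:muz} and~\eqref{eq:muDerivatives}. Under the tilted law $\Peff_{z(u)}(y,m) \df \fa{y,m}(z(u)) = \fa{y,m}\,e^{-\mu(z(u))m + z(u)\cdot y}$ (which is a probability measure since $\mathbf F(z(u),\mu(z(u))) = 1$), the per-step drift satisfies $\bbE_{z(u)}[Y_1]/\bbE_{z(u)}[M_1] = u$. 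The exponent one extracts by this tilt is precisely the Legendre transform
\[
\Lambda_a^*(u) - \lambda = \sup_z\bigl\{z\cdot u - \mu(z)\bigr\},
\]
which, combined with the constant $\Lambda_a(h)-h\cdot u$ from undoing the $(h,\lambda)$-weighting, gives $J_h^a(u)$ as in~\eqref{eq:LDFunctions}. Real analyticity and strict convexity of $J_h^a$ on $B_\epsilon(v_h)$, with a non-degenerate quadratic minimum at $v_h$, are inherited from the analyticity and non-degeneracy of $\mu(\cdot)$.

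\textbf{Step 3: Local CLT for the tilted random walk, summed over $N$.} Under $\Peff_{z(u)}$ the increments $(Y_i,M_i)$ have finite exponential moments by Theorem~\ref{thm:ConePoints}, and the covariance matrix is non-degenerate by the analytic continuation of the Hessian from Step 2. Writing $x = \lfloor nu \rfloor$, the typical number of renewals is $N_\star(u) = n/\bbE_{z(u)}[M_1]$ with Gaussian fluctuations of order $\sqrt{n}$. I would apply a standard $(d+1)$-dimensional lattice local central limit theorem to $S_N$ under $\Peff_{z(u)}$ and then sum over $N$; the marginal local CLT in the $M$-coordinate (the sum $\sum M_i = n$ being the renewal conditioning) contributes one factor $1/\sqrt n$, while the conditional density for $\sum Y_i = x$ given $\sum M_i = n$ contributes $1/\sqrt{n^d}$; one of the two is absorbed when converting the probability back to the sum over $N$, yielding the net power $n^{-d/2}$. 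The outcome is
\[
\ta{x,n} = \frac{G_0(u)}{n^{d/2}}\,\exp\bigl\{-n\,(\Lambda_a^*(u)-\lambda)\bigr\}\bigl(1+\smo{1}\bigr),
\]
with $G_0$ strictly positive and real-analytic in $u$, uniformly in $u \in B_\epsilon(v_h)\cap \tfrac1n \Zd$. Combining with Step~1 identifies the claimed prefactor $G(u)$ and the rate $J_h^a(u)$.

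\textbf{Main obstacle.} The delicate step is Step~3: one needs a \emph{uniform} lattice local CLT for $(d+1)$-dimensional i.i.d.\ vectors with analytically parametrized tilts, together with aperiodicity of the step distribution $\fa{y,m}$ on $\bbZ^d\times\bbN$. Aperiodicity of $M_i$ is immediate (irreducible pieces can have variable lengths), but aperiodicity of $Y_i$ on the lattice of possible displacements, and the characteristic-function estimates away from the origin needed for uniform control over $u$ in the lattice $\tfrac1n\Zd$, require some care; this is the most technical ingredient and is where the bulk of the work in \cite{IoffeVelenik-Annealed} is concentrated.
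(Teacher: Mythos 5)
Your approach is exactly the one sketched in the paper (Subsection~\ref{ssec:OZ} and~\cite{IoffeVelenik-Annealed}): rewrite $\ta{x,n}$ via~\eqref{eq:expandedrenewal} as $\sum_{N\geq 1}\Peff(S_N=(x,n))$ for an i.i.d.\ walk on $\bbZ^d\times\bbN$ with exponential tails, tilt using the analytic map $\mu(\cdot)$ of Lemma~\ref{lem:muz}, apply a lattice local CLT summed over the number of renewals, and sum out the boundary pieces $\omega_>,\omega_<$ to pass back to $\bbA_n^h$. The power count $n^{-d/2}$ is also correct. However, your Legendre-duality bookkeeping in Step~2 is off. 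Since $\mu(z)=\Lambda_a(h+z)-\lambda$ and $\lambda=\Lambda_a(h)$, substituting $f=h+z$ gives
\[
\sup_z\bigl\{z\cdot u-\mu(z)\bigr\} \;=\; \sup_f\bigl\{f\cdot u-\Lambda_a(f)\bigr\}-h\cdot u+\Lambda_a(h)\;=\;J^a_h(u),
\]
i.e.\ the tilt exponent is already the \emph{full} rate function; the extra constant $\Lambda_a(h)-h\cdot u$ you propose to add is thus spurious (your claimed identity $\Lambda_a^*(u)-\lambda=\sup_z\{z\cdot u-\mu(z)\}$ would force $h\cdot u=2\lambda$, which is false in general). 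Correspondingly, the reduction in Step~1 should compare $e^{h\cdot x}A_{\lambda,n}(x)$ --- not $e^{h\cdot x-\lambda n}A_{\lambda,n}(x)$ --- with $\ta{x,n}$: the factor $e^{-\lambda n}$ is already contained in both $a_\lambda$ and $a_{\lambda,h}$, so writing it again doubles the $\lambda$-tilt. These two slips do not cancel; as written they would produce an exponent differing from $-nJ^a_h(u)$ by $n(\lambda-h\cdot u)$. Once the tilting is done cleanly, $\bbA_n^h(X(\gamma)=x)=e^{h\cdot x}A_{\lambda,n}(x)/(e^{-\lambda n}A_n(h))$, the denominator is $\asymp 1/\kappa$ by Lemma~\ref{lem:tanAsympt}, the numerator reduces to $\ta{x,n}$ up to an analytic prefactor coming from the boundary irreducible pieces, and your Step~3 delivers $\ta{x,n}\asymp n^{-d/2}G_0(u)e^{-nJ^a_h(u)}$, which is the claimed result.
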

\begin{remark}
 We would like to note that a local limit result for a particular instance of the 
annealed model (discrete Wiener
sausage with a fixed non-zero drift at small $\beta$) was obtained in~\cite{Trachsler}. We are grateful to Erwin Bolthausen for sending us a copy of this work.
\end{remark}

\section{Weak disorder}
\label{sec:weak}
In this section, we focus on the super-critical quenched models in the weak disorder regime.
Accordingly, we consider higher dimensional models on $\bbZ^d$ with $d\geq 4$. 
Let us say that the weak disorder holds at $(h ,\beta )$ if there 
exists $\lambda >0$ such that 
$h\in\partial\Kal{\lambda}$ and the disorder is weak in the conjugate ensemble at $(\lambda ,\beta )$, 
that is the Lyapunov exponents coincide;  $\fra_\lambda (\cdot ) \equiv \frq_\lambda (\cdot )$.
In particular, $\lambda = \Lambda_a (h ) =\Lambda_q (h)$.
\subsection{LLN at  super-critical drifts}
\label{ssec:LLN}
An important, albeit elementary, observation is that, in this regime, events of 
exponentially small probability under the annealed measure $\bbA_n^h$ are also exponentially unlikely under the quenched measure $\bbQ_n^h$.
\begin{lemma}
\label{lem:weakdisorder-expdecay}
Assume that weak disorder holds, $\Lambda_a (h ) =\Lambda_q (h) $. Let $E$ be a path event such that $\bbA_n^h(E) \leq e^{-cn}$ for some constant $c>0$ and all $n$ large enough. Then there exists $c'>0$ such that, $\bbP$-a.s.,
\[
\bbQ_n^h(E) \leq e^{-c'n},
\]
for all $n$ large enough.
\end{lemma}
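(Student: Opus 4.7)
The plan is to combine a first-moment bound with the weak-disorder comparison of partition functions via Borel--Cantelli. Write
\[
\bbQ_n^h(E) \;=\; \frac{Q_n(h,E)}{Q_n(h)}, \qquad Q_n(h,E) \df \sum_{|\gamma|=n,\, \gamma\in E} q_h(\gamma),
\]
and note the exact identity $\bbE Q_n(h,E) = A_n(h,E) = \bbA_n^h(E)\, A_n(h)$. The hypothesis $\bbA_n^h(E)\leq e^{-cn}$ then gives $\bbE Q_n(h,E) \leq e^{-cn} A_n(h)$.

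The first step is a Markov bound: for each $n$,
\[
\bbP\bigl(\,Q_n(h,E) \geq e^{-cn/2}A_n(h)\,\bigr) \;\leq\; \frac{\bbE Q_n(h,E)}{e^{-cn/2}A_n(h)} \;\leq\; e^{-cn/2}.
\]
Since $\sum_n e^{-cn/2}<\infty$, Borel--Cantelli yields, $\bbP$-a.s., the deterministic-looking bound
\[
Q_n(h,E) \;\leq\; e^{-cn/2}\, A_n(h) \qquad \text{for all }n\text{ large enough.}
\]

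The second step uses the weak-disorder hypothesis $\Lambda_q(h)=\Lambda_a(h)=\lambda$. By \eqref{eq:aMGF}--\eqref{eq:qMGF} (with $\bbP$-a.s.\ convergence in the quenched case), we have $\tfrac1n\log A_n(h)\to\lambda$ and $\tfrac1n\log Q_n(h)\to\lambda$ $\bbP$-a.s. Thus for any $\epsilon>0$, $\bbP$-a.s.\ and all $n$ sufficiently large,
\[
Q_n(h) \;\geq\; e^{-\epsilon n}\, A_n(h).
\]

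Combining the two bounds, $\bbP$-a.s.\ and for all $n$ large,
\[
\bbQ_n^h(E) \;=\; \frac{Q_n(h,E)}{Q_n(h)} \;\leq\; \frac{e^{-cn/2}A_n(h)}{e^{-\epsilon n}A_n(h)} \;=\; e^{-(c/2-\epsilon)n}.
\]
Choosing, e.g., $\epsilon = c/4$ gives the conclusion with $c' = c/4$.

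There is no serious obstacle here: the entire argument is first-moment plus Borel--Cantelli. The only substantive input is the weak-disorder assumption, which is used precisely to absorb the uncontrolled ratio $A_n(h)/Q_n(h)$ into a subexponential prefactor. One should remark that in the absence of the equality $\Lambda_q=\Lambda_a$ this ratio could be exponentially large, and then an exponentially improbable annealed event need not remain improbable under the quenched measure---this is exactly the obstruction the hypothesis removes.
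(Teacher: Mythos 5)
Your proof is correct and follows essentially the same route as the paper: apply Markov's inequality to the restricted quenched partition function $Q_n(h;E)$ using $\bbE Q_n(h;E)=\bbA_n^h(E)A_n(h)$, invoke Borel--Cantelli, and use the weak-disorder equality $\Lambda_q(h)=\Lambda_a(h)$ to bound the denominator $Q_n(h)$ from below by $A_n(h)e^{-\epsilon n}$ almost surely eventually. If anything, your two-step organization (separate a.s.\ bounds on numerator and denominator, then divide) is slightly cleaner than the paper's single chain of inequalities, whose middle step $\bbP\bigl(Q_n(h;E)>Q_n(h)e^{-c'n}\bigr)\le\bbP\bigl(Q_n(h;E)>A_n(h)e^{-c'n/2}\bigr)$ implicitly needs to be read on the good event where the weak-disorder comparison holds; your version makes that explicit.
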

\begin{proof}
We note that, since $\Lambda_a (h ) =\Lambda_q (h) $, it 
follows from Markov's inequality that, for all $n$ large enough,
\begin{multline*}
\bbP\bigl( \bbQ_n^h(E) > e^{-c' n} \bigr) = \bbP \bigl( Q_n(h;E) > Q_n(h) e^{-c' n} \bigr)\\
\leq \bbP \bigl( Q_n(h;E) > A_n(h) e^{-\tfrac12 c' n} \bigr)
\leq \bbA_n^h(E) e^{+\tfrac12 c' n} \leq e^{-(c-\tfrac12 c') n},
\end{multline*}
where $Q_n(h;E)$ denotes the quenched partition function restricted to paths in $E$.
The conclusion now follows from Borel-Cantelli.
\qed\end{proof}
Recall that a pulling force $h\in\partial\Kal{\lambda}$ is called super-critical if $\lambda >0$. 
Using Lemma~\ref{lem:weakdisorder-expdecay}, it is very easy to prove that, in the super-critical weak disorder regime, the quenched model satisfies LLN, and that the polymer has the same limiting macroscopic extension under the quenched and annealed path measures.
\begin{lemma}
Assume that weak disorder holds, $\lambda = \Lambda_a (h )=\Lambda_q (h) >0$.
Let $v=\nabla\Lambda_a (h)$ be the macroscopic extension of the polymer under the annealed path measure. Then, for any $\epsilon>0$,
\[
\lim_{n\to\infty} \bbQ^h_n \bigl( \bigl|\frac{X(\gamma )}{n} - v \bigr| > \epsilon\bigr) = 0,\qquad \bbP \text{-a.s.,}
\]
exponentially fast in $n$.
\end{lemma}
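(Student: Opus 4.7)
The plan is to reduce the quenched LLN to its annealed counterpart by invoking Lemma~\ref{lem:weakdisorder-expdecay}. Set $E_{n,\epsilon} \df \{\abs{X(\gamma)/n - v} > \epsilon\}$. The lemma tells us that whenever the annealed probabilities $\bbA_n^h(E_{n,\epsilon})$ decay exponentially in $n$, the same holds $\bbP$-a.s.\ for the quenched probabilities $\bbQ_n^h(E_{n,\epsilon})$, with a (possibly smaller) rate. So the entire task boils down to establishing the annealed exponential estimate.

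To get the annealed bound I would invoke the LDP of Theorem~\ref{thm:LD} together with the geometric analysis of $\Kal{\lambda}$ carried out in Subsection~\ref{sub:Geometry}. Since $h$ is super-critical ($\lambda = \Lambda_a(h) > 0$) and $\Lambda_a$ is real analytic with non-degenerate Hessian at $h$ by Lemma~\ref{lem:muz} and~\eqref{eq:muDerivatives}, its Legendre transform $J_a^h$ is strictly convex in a neighborhood of $v$ and attains its unique zero precisely at $v = \nabla\Lambda_a(h)$. Combined with the lower semicontinuity of $J_a^h$ and the equivalence of norms~\eqref{eq:LEproperties} (which, via $c^1_\lambda\uparrow\infty$ as $\lambda\uparrow\infty$, forces $J_a^h(u)\to\infty$ as $\abs{u}\to\infty$ and thereby makes the level sets of $J_a^h$ compact), one obtains $c(\epsilon) \df \inf_{\abs{u-v}\geq\epsilon} J_a^h(u) > 0$. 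The LDP upper bound applied to the closed set $\{u : \abs{u-v}\geq\epsilon\}$ then gives $\bbA_n^h(E_{n,\epsilon}) \leq e^{-c(\epsilon)n/2}$ for all sufficiently large $n$.

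No step presents a substantial obstacle: the annealed LDP is Theorem~\ref{thm:LD}, the uniqueness and analyticity of the annealed minimizer is established in Section~4, and the annealed-to-quenched transfer is Lemma~\ref{lem:weakdisorder-expdecay} (itself a short Markov-plus-Borel--Cantelli argument that crucially uses $\Lambda_a(h) = \Lambda_q(h)$). If anything deserves care, it is the compactness of the level sets of $J_a^h$ required to promote positivity of $J_a^h$ on $\{v\}^c$ to positivity of the infimum on $\{\abs{u-v}\geq\epsilon\}$; but this reduces to the norm-equivalence bounds~\eqref{eq:LEproperties}.
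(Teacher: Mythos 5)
Your proof is correct and uses exactly the same argument as the paper: apply Lemma~\ref{lem:weakdisorder-expdecay} to the event $\{|X(\gamma)/n - v| > \epsilon\}$, reducing the quenched LLN to the annealed exponential estimate. The only difference is that the paper simply cites \eqref{eq:ballisticOutA} (already established as part of Theorem~\ref{thm:ballistic}) for the annealed bound, whereas you re-derive it from the LDP and the Section~4 geometry; that re-derivation is sound, though the compactness discussion is slightly more elaborate than necessary since $X(\gamma)/n$ automatically lies in the unit $\ell^1$-ball.
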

\begin{proof}
The claim immediately follows from~\eqref{eq:ballisticOutA} and Lemma~\ref{lem:weakdisorder-expdecay}.
\qed\end{proof}

\color{black}

\subsection{Very Weak Disorder}
\label{sec:veryweak} 
Recall that we use notation $\lb \Omega , \calF, \bbP\rb$ for the (product ) probability
space generated by the random environment. 
For the rest of this section, we consider the regime of very weak disorder, which 
should be understood in the following sense: we fix either $\lambda >0$ or $h\neq 0$ and then, for $\beta$ sufficiently small, we pick the remaining parameter ($h$ or $\lambda$) according to $h\in\partial\Kql{\lambda}$. 

\noindent
The regime of very weak disorder is quantified  in terms of the
 following upper bound : 
\begin{lemma}
Fix an external force $h\neq 0$. Then, for all $\beta$ small enough, 
the random weights~\eqref{eq:weights} (with $\lambda = \lambda ( h ,\beta )$ being determined by $h\in\partial \Kal {\lambda} $) satisfy: There exist $c_1,c_2 <\infty$ such that, uniformly in $x_1, x_2 , m_1 , m_2, \ell $ and in sub-$\sigma$-algebras $\calF_1, \calF_2$ of 
$\calF$,
\begin{equation}  
\label{eq:WDBound}
\begin{split}
&\abs{\bbE \tq{x_1 , \ell}\tq{x_2 ,\ell} \bbE\bigl( f^{\theta_{x_1}\omega}_{m_1} -\fa{m_1}\bigm| 
\calF_1 \bigr) \bbE \bigl( f^{\theta_{x_2}\omega}_{m_2} -\fa{m_2} \bigm|\calF^\prime \bigr)} \\
&\quad 
\leq \frac{c_1e^{- c_2 (m_1+m_2 )}}{ \ell^{d}} \exp\Bigl\{ -c_2\bigl( \abs{x_1 - x_2} 
+ \frac{\abs{x_1 -\ell v}^2}{\ell} 
\bigr)\Bigr\} , 
\end{split}
\end{equation}
where $v = \nabla \lambda (h)$. 
\end{lemma}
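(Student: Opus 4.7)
The plan is to combine a second-moment ($L^2$) control of products of quenched partition functions -- valid at small $\beta$ in dimension $d\geq 4$ -- with the centering of the inner conditional expectations and with the cone-confinement geometry of Section~3. The argument naturally splits into four steps.

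\emph{Step 1 (Supports and geometric decoupling).} The variable $\tq{x_i,\ell}$ is measurable with respect to the environment restricted to the lens $L_i\df\fcone\cap(x_i+\bcone)$, while $f^{\theta_{x_i}\omega}_{m_i}$ depends only on the shifted environment inside the ball of radius $\sim m_i$ in $x_i+\fcone$. The intersection $L_i\cap(x_i+\fcone)$ is concentrated in a bounded neighbourhood of $x_i$. One replaces cone-confined paths by strictly cone-confined ones staying at positive distance from $x_i$ in an initial slab, absorbing the correction via the exponential tails in Theorem~\ref{thm:ConePoints}. After this surgery, the environment supports of $\tq{x_i,\ell}$ and $f^{\theta_{x_i}\omega}_{m_i}$ are disjoint.

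\emph{Step 2 (Centering and exponential $m_i$-decay).} Since $\bbE(f^{\theta_{x_i}\omega}_{m_i}-\fa{m_i})=0$, the conditional expectations $G_i\df\bbE(f^{\theta_{x_i}\omega}_{m_i}-\fa{m_i}\mid\calF_i)$ are centered. Boundedness of $\supp V$ gives $|f^{\theta_{x_i}\omega}_{m_i}|\leq C\,\fa{m_i}$, and Theorem~\ref{thm:ConePoints} provides the exponential tail $\fa{m_i}\leq e^{-c m_i}$. Hence $|G_i|\leq e^{-c m_i}$, yielding the factor $e^{-c_2(m_1+m_2)}$.

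\emph{Step 3 ($L^2$ bound and annealed local CLT).} A polaron-type expansion of the second moment gives
\[
\bbE[\tq{x_1,\ell}\tq{x_2,\ell}]=\ta{x_1,\ell}\,\ta{x_2,\ell}\,\bigl(\bbA_\ell^{x_1}\!\otimes\bbA_\ell^{x_2}\bigr)\!\Bigl[\prod_y e^{\psi_\beta(\ell_{\gamma_1}(y),\ell_{\gamma_2}(y))}\Bigr],
\]
with $\psi_\beta(\ell_1,\ell_2)\df\phi_\beta(\ell_1)+\phi_\beta(\ell_2)-\phi_\beta(\ell_1+\ell_2)=O(\beta^2\ell_1\ell_2)$. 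In $d\geq 4$ and at sufficiently small $\beta$, two independent annealed paths have $O(1)$ expected intersections, so the correction factor is uniformly bounded and $\bbE[\tq{x_1,\ell}\tq{x_2,\ell}]\leq C\,\ta{x_1,\ell}\,\ta{x_2,\ell}$. The annealed local limit theorem~\eqref{BeAsympt} then gives $\ta{x_i,\ell}\leq C\,\ell^{-d/2}\exp(-c|x_i-\ell v|^2/\ell)$. Retaining only the $i=1$ Gaussian and bounding $\exp(-c|x_2-\ell v|^2/\ell)\leq 1$ produces the factor $\ell^{-d}\exp(-c_2|x_1-\ell v|^2/\ell)$.

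\emph{Step 4 (Spatial decoupling and the $|x_1-x_2|$-decay).} After Step~1, the environment supports of $G_1$ and $G_2$ are disjoint whenever $|x_1-x_2|>c(m_1+m_2)$, and centering forces the joint expectation to vanish in that regime. In the opposite regime $|x_1-x_2|\leq c(m_1+m_2)$, the interpolation
\[
e^{-c(m_1+m_2)}\leq e^{-(c-\epsilon)(m_1+m_2)}\,e^{-(\epsilon/c)|x_1-x_2|}
\]
converts part of the $m$-decay of Step~2 into exponential decay in $|x_1-x_2|$. Combining the four steps yields the stated bound.

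The main obstacle is the rigorous decoupling in Step~1: the lens $L_i$ and the cone $x_i+\fcone$ overlap in a thin neighbourhood of $x_i$, and the centering of $G_i$ has to be preserved after conditioning on this boundary environment. Handling this cleanly requires a surgery on the skeletons near $x_i$, analogous to that in the proof of Theorem~\ref{thm:ConePoints}, together with a tower-property and Cauchy--Schwarz step that absorbs the residual overlap into an exponentially small remainder, so that the product structure presumed in Steps~3--4 is restored.
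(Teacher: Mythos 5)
The paper does not actually prove this lemma: immediately after stating it, the authors write that the proof ``will be given elsewhere'' (citing \cite{IoffeVelenik-inprogress}, a manuscript in preparation) and that closely related bounds are in \cite{IoffeVelenik-AOP}. So there is no in-paper proof to compare your proposal against, only the one-sentence heuristic that the right-hand side of~\eqref{eq:WDBound} encodes local limit bounds for two independent annealed polymers with an exponential penalty for disagreement at the endpoints, and that the very-weak-disorder interaction does not destroy these asymptotics.

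Your outline is in keeping with that heuristic, and Steps~2--3 (exponential tails of the annealed irreducible weights, an $L^2$ expansion of $\bbE[\tq{x_1,\ell}\tq{x_2,\ell}]$ through the attractivity cross-term $\psi_\beta$, and the annealed local CLT~\eqref{BeAsympt}) are the expected ingredients. However, two of your steps need repair. In Step~2 the claimed pointwise bound $|f^{\theta_{x_i}\omega}_{m_i}|\leq C\,\fa{m_i}$ is not correct as stated: from $V\leq M$ one only gets $\phi_\beta(\ell)\leq\beta M\ell$, hence $\Phi_\beta(\gamma)\leq\beta M|\gamma|$ and $f^\omega_m\leq e^{\beta M m}\,\fa{m}$; the exponential decay of $G_i$ survives at small $\beta$ only because of the margin in the rate of Theorem~\ref{thm:ConePoints}, not because of a uniform constant $C$. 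In Step~4, disjointness of the supports of $G_1$ and $G_2$ alone does not make $\bbE[\tq{x_1,\ell}\tq{x_2,\ell}G_1G_2]$ vanish by centering: you would also need the lens carrying $\tq{x_2,\ell}$ to be disjoint from the support of $G_1$ (and vice versa), which fails whenever $x_2-x_1\in\fcone$, since then $\fcone\cap(x_2+\bcone)$ meets $x_1+\fcone$. Moreover, $\calF_1,\calF_2$ are arbitrary sub-$\sigma$-algebras, so the skeleton surgery you flag as the main obstacle in Step~1 must be set up to preserve the centering of $G_i$ after conditioning on an unstructured $\calF_i$. That surgery is where the real difficulty sits, and neither your sketch nor this paper resolves it.
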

Although~\eqref{eq:WDBound} looks technical, it has a transparent intuitive meaning: the expressions on the rhs are just local limit bounds for a couple of independent annealed polymers  with exponential penalty for disagreement at  their end-points. In the regime of very weak disorder, the interaction between polymers does not destroy these asymptotics. The proof will be given elsewhere~\cite{IoffeVelenik-inprogress}. Closely related upper bounds were already derived in~\cite{IoffeVelenik-AOP}.

\subsection{Convergence of Partition Functions.}
\label{ssec:Convergence}
As mentioned above, the rescaled quenched partition functions satisfy the following multidimensional renewal relation:  
\begin{equation}  
\label{eq:htqFormula}
\tq{z, n} = \sum_{m=0}^{n-1}\sum_x \tq{x ,m} f^{\theta_x\omega}_{z -x ,n-m} 
\ \ {\rm and}\ \ \tq{ n} = \sum_z \tq{z, n} .
\end{equation}
\begin{theorem}
\label{thm:tqnAsympt} 
In the regime of very weak disorder, 
\begin{equation} 
\label{eq:tqnAsympt}
\lim_{n\to\infty} \tq{n} = \frac{1}{\kappa} \Bigl( 1 + \sum_{x ,y} \tq{x} 
\lb f^{\theta_x\omega}_{y-x}  -\fa{y-x}\rb\Bigr)
\df \frac{1}{\kappa} s^\omega 
 \in (0, \infty ) ,
\end{equation}
$\bbP$-a.s.\ and in $L_2 (\Omega )$. 
\end{theorem}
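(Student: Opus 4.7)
The plan is to derive a random renewal identity for $\tq{n}$ that mirrors the one underlying Lemma~\ref{lem:tanAsympt}, and to control the stochastic error via the very weak disorder bound~\eqref{eq:WDBound}. Summing the quenched renewal~\eqref{eq:htqFormula} over $z$ and setting $g^\omega_k \df \sum_y f^\omega_{y,k}$, I obtain
\begin{equation*}
\tq{n} = \1_{\{n=0\}} + \sum_{m=0}^{n-1}\tq{m}\fa{n-m} + r_n^\omega, \qquad r_n^\omega \df \sum_{m=0}^{n-1}\sum_x \tq{x,m}\bigl( g^{\theta_x\omega}_{n-m} - \fa{n-m}\bigr).
\end{equation*}
The cone-confinement of paths in $\calT(\cdot)$ and $\calF(\cdot)$ makes $\tq{x,m}$ measurable with respect to the environment in a cone terminating at $x$, while $g^{\theta_x\omega}_{n-m}$ depends only on the environment in a cone issuing from $x$; modulo an exponentially negligible overlap, the summands of $r_n^\omega$ have essentially vanishing conditional mean.

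Introducing the random generating functions $T^\omega(u) \df \sum_n u^n \tq{n}$ and $R^\omega(u) \df \sum_n u^n r_n^\omega$, the relation above becomes $T^\omega(u) = (1 + R^\omega(u))/(1 - \hat{\mathbf{f}}[u])$, so a verbatim repetition of the Cauchy-formula argument from the proof of Lemma~\ref{lem:tanAsympt}, with the numerator $1$ replaced by $1 + R^\omega(u)$, formally yields
\begin{equation*}
\tq{n} \longrightarrow \frac{1 + R^\omega(1)}{\kappa}, \qquad R^\omega(1) = \sum_{x,y}\tq{x}\bigl(f^{\theta_x\omega}_{y-x} - \fa{y-x}\bigr) = s^\omega - 1,
\end{equation*}
after rearranging the iterated sums over $m$, $x$, $k$. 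The heart of the proof, and its main obstacle, is the $L_2$-control of the partial sums $R_N^\omega \df \sum_{n=1}^N r_n^\omega$: expanding $\bbE(R_N^\omega - R_M^\omega)^2$ produces a double sum of terms $\bbE\bigl[\tq{x_1,m_1}\tq{x_2,m_2}\,\Delta^{\theta_{x_1}\omega}_{k_1}\Delta^{\theta_{x_2}\omega}_{k_2}\bigr]$ with $\Delta^\omega_k \df g^\omega_k - \fa{k}$, and conditioning on the $\sigma$-algebras $\calF_i$ generated by the environment along the $\tq{x_i,m_i}$-pieces places us exactly in the setting of~\eqref{eq:WDBound} with $\ell = \min(m_1,m_2)$. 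The resulting bound $c_1\ell^{-d}e^{-c_2(m_1+m_2)}\exp\{-c_2(|x_1-x_2| + |x_1-\ell v|^2/\ell)\}$, summed against the Gaussian tail in $x_1,x_2$, leaves $O(\ell^{-d/2})$ per time-scale, which is summable precisely because $d\geq 4$; thus $(R_N^\omega)$ is Cauchy in $L_2(\Omega)$ with limit $s^\omega-1$.

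The resolvent identity then transfers $L_2$-convergence of $R^\omega(1)$ into $L_2$-convergence of $\tq{n}$ to $s^\omega/\kappa$, with $\bbE s^\omega = 1$; strict positivity $s^\omega > 0$ a.s.\ is a separate point, for which I plan to invoke ergodicity of the environment together with a $0$--$1$ argument applied to the tail event $\{s^\omega = 0\}$. Promotion from $L_2$- to $\bbP$-a.s.-convergence is the second subtle step: $L_2$-convergence alone is not enough, and I would derive a.s.\ convergence along a geometric subsequence $n_k = \lfloor\rho^k\rfloor$ by Borel--Cantelli from the variance bound, then interpolate between consecutive $n_k$ using a concentration estimate for $\log\tq{n}$ in the spirit of Lemma~\ref{lem:QxConc}. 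A conceptually cleaner but technically more delicate alternative, closer to the martingale strategy of~\cite{McLeish} alluded to in Subsection~\ref{ssec:Convergence}, is to construct an auxiliary martingale by exhausting $\Zd$ in a suitable order; the absence of a natural filtration is what makes this avenue tricky.
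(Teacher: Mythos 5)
Your identity $T^\omega(u)=\bigl(1+R^\omega(u)\bigr)/\bigl(1-\hat{\mathbf f}[u]\bigr)$ is, at the level of coefficients, equivalent to the Sinai-type expansion the paper's proof begins with, so the starting point is correct and your algebra identifying $R^\omega(1)=s^\omega-1$ checks out. But the ``verbatim repetition of the Cauchy-formula argument'' does not exist: that argument in Lemma~\ref{lem:tanAsympt} requires the numerator to be analytic on a disc of radius $>1$, which $1$ trivially is, whereas $R^\omega(u)$ is only a formal power series whose coefficients decay at best polynomially, so there is no analytic continuation past $|u|=1$. Extracting coefficients instead gives $\tq{n}=\ta{n}+\sum_{k\le n}r_k^\omega\,\ta{n-k}$, and passing to the limit requires splitting $\ta{n-k}=\kappa^{-1}+(\ta{n-k}-\kappa^{-1})$ and controlling the second piece separately; that is precisely the paper's correction term $\epsilon_n^\omega$, whose estimate $\sum_n\bbE(\epsilon_n^\omega)^2<\infty$ is an independent step and not a corollary of a ``resolvent identity''. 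Your proposal to show that the partial sums $R_N^\omega$ are $L_2$-Cauchy is in the right spirit, but expanding $\bbE(R_N^\omega-R_M^\omega)^2$ gives products $\tq{x_1,m_1}\tq{x_2,m_2}$ with unequal $m_1\ne m_2$, whereas~\eqref{eq:WDBound} is stated for equal lengths $\ell$ in both factors; to use it one must reorganize the sum by the length $\ell$ of the initial piece --- the paper's $X_\ell$ in~\eqref{eq:Xl} --- rather than by the outer index $n$.

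The more serious gap is the promotion from $L_2$- to $\bbP$-a.s.\ convergence. Your proposed device --- a.s.\ convergence along $n_k=\lfloor\rho^k\rfloor$ by Borel--Cantelli, then interpolation via a concentration estimate ``in the spirit of Lemma~\ref{lem:QxConc}'' --- does not close: a Lemma~\ref{lem:QxConc}-type bound controls $\log\tq{n}$ on scale $\sqrt{n}$, which is far too coarse to show that $\tq{n}$ itself stabilizes between consecutive $n_k$. The mechanism the paper actually relies on, and which you mention only in passing as ``tricky'', is McLeish's maximal inequality~\eqref{eq:Maximal}: together with the half-space filtration $\calF_m=\sigma\lb V(x):x\cdot\sfe_1\le mv\rb$ it delivers $\bbP$-a.s.\ convergence of $\sum_\ell X_\ell$ directly from the summability $\sum_\ell a_\ell^2<\infty$, with no subsequence extraction and no interpolation. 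Verifying the conditional second-moment bounds~\eqref{eq:McLeish} is exactly where~\eqref{eq:WDBound} and the restriction $d\ge4$ enter, and the ``absence of a natural filtration'' you worry about is resolved precisely by ordering the environment along the drift direction.
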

\begin{proof}
We rely on an expansion  similar to the one employed by Sinai and rewrite~\eqref{eq:htqFormula} as
\begin{equation} 
\label{eq:Sinai0}
\tq{z ,n} = \ta{z, n} +\sum_{l+m +r =n} \sum_{x, y}
\tq{x ,l}\lb  f^{\theta_x\omega}_{y-x , m} - \fa{y-x ,m}\rb \ta{z-y ,r} .
\end{equation}
This is just a resummation based on the following identity: 
Let $0 <\ell_1 , \dots , \ell_k$ and let $x_1, \dots x_k\in\bbZ^d$. Set $x_0=0$. 
Then,
\begin{equation*}
 \begin{split}
  \prod_1^k f_{x_j -x_{j-1} ,\ell_j}^{\theta_{x_{j-1}}\omega} 
&=
\prod_1^k \fa{x_j - x_{j-1} ,l_j } + \lb \fq{x_1 ,\ell_1} - \fa{x_1 ,\ell_1}\rb
\prod_2^k \fa{x_j - x_{j-1} ,l_j } \\
& + f_{x_1 ,\ell_1}^\omega
\lb f^{\theta_{x_1}\omega}_{x_2 - x_1 ,\ell_2} - \fa{x_2-x_1 ,\ell_2}\rb 
\prod_3^k \fa{x_j - x_{j-1} ,l_j } \\
&  +\dots + 
\prod_1^{k-1} f_{x_j -x_{j-1} ,\ell_j}^{\theta_{x_{j-1}}\omega}
\lb f^{\theta_{x_{k-1}}\omega}_{x_k - x_{k-1} ,\ell_{k}} - \fa{x_{k}-x_{k-1} ,\ell_k}\rb .
 \end{split}
\end{equation*}
\eqref{eq:Sinai0} implies, 
\begin{equation}  
\label{eq:Sinai}
\begin{split}
\tq{n} & = \ta{n} + \sum_{l+m +r =n} \sum_{x }
\tq{x ,l}\lb f^{\theta_x\omega}_{m} - \fa{m}\rb \ta{r} \\
&= \ta{n} +  \frac{1}{\kappa} \sum_{l+m\leq n} \sum_{x }
\tq{x ,l}\lb f^{\theta_x\omega}_{m} - \fa{m}\rb + 
  \sum_{l+m +r =n} \sum_{x }
\tq{x ,l}\lb f^{\theta_x\omega}_{m} - \fa{m}\rb \bigl( \ta{r} -\frac{1}{\kappa}\bigr) \\
& = \frac{1}{\kappa} s^\omega_n  + \lb \ta{n} - 1/\kappa \rb +  \epsilon_n^\omega ,
\end{split}
\end{equation}
where 
\begin{equation}
\label{eq:snterm}
s^\omega_n = 1 + \sum_{l+m\leq n}\sum_{x } 
\tq{x ,l}\lb f^{\theta_x\omega}_{m} - \fa{m}\rb , 
\end{equation} 
and the correction term $\epsilon_n^\omega$ is given by
\begin{equation}  
\label{eq:EpsTerm}
\epsilon_n^\omega = 
 \sum_{l+m +r =n} \sum_{x }
\tq{x ,l}\lb f^{\theta_x\omega}_{m} - \fa{m}\rb \bigl( \ta{r} -\frac{1}{\kappa}\bigr) .
\end{equation}
We claim that, $\bbP$-a.s., 
\begin{equation} 
\label{eq:EpsConv}
\lim_{n\to\infty} s^\omega_n = s^\omega \ \ {\rm and}\ \ 
\sum_n \bbE ( \epsilon^ \omega_n )^2 <\infty .
\end{equation}
The assertion of Theorem~\ref{thm:tqnAsympt} follows.
\qed\end{proof}
The main input for proving~\eqref{eq:EpsConv} is the upper bound of~\eqref{eq:WDBound} and the following maximal inequality of McLeish.

\smallskip
\noindent
\textbf{Maximal Inequality.} Let $X_1 , X_2, \dots $ be a sequence of zero mean and square integrable random variables. Let also $\lbr\calF_k\rbr_{-\infty}^{\infty}$ be a filtration of $\sigma$-algebras. Suppose that we have chosen $\epsilon >0$ and numbers $a_1, a_2 , \dots $ in such a way that
\begin{equation}  
\label{eq:McLeish}
\bbE\lb \bbE\lb X_\ell ~\big| \calF_{\ell -m}\rb^2\rb\leq \frac{a_\ell^2}{(1+m)^{1+\epsilon}} 
\ \ {\rm and}\ \ 
\bbE\lb X_\ell - \bbE\lb X_\ell ~\big| \calF_{\ell + m}\rb\rb^2 \leq 
\frac{a_\ell^2}{(1+m)^{1+\epsilon}} ,
\end{equation}
for all $\ell = 1,2,\ldots $ and $m\geq 0$. Then there exists $K = K (\epsilon ) <\infty$ such that, for all $n_1 \leq  n_2$,
\begin{equation}  
\label{eq:Maximal}
\bbE\max_{n_1\leq m\leq n_2}\Bigl( \sum_{n_1}^m X_\ell\Bigr)^2 \leq K \sum_{n_1}^{n_2} a_\ell^2 .
\end{equation}
In particular, if $\sum_\ell a_\ell^2 <\infty$, then $\sum_\ell X_\ell$ converges $\bbP$-a.s..

\smallskip
\noindent
\textbf{Proof of~\eqref{eq:EpsConv}.}
The difficult part of $s_n^\omega$ in~\eqref{eq:snterm} is 
\begin{equation}  
\label{eq:Xl}
 \sum_{\ell\leq n} \sum_{x } \tq{x ,\ell}\lb\fxq{\, }  -1\rb \df \sum_{\ell = 1}^n X_\ell .
\end{equation} 
To simplify the exposition, let us consider the case of an on-axis external force $h = h\sfe_1$.  By lattice symmetries, the mean displacement $v =\nabla\lambda (h) = v\sfe_1$. At this stage, let us define the hyperplanes $\cHm{m} \df \setof{x}{x\cdot\sfe_1 \leq mv}$ and the $\sigma$-algebras
\[
\calF_m \df \sigma\lb V (x)~:~ x\in\cHm{m }\rb .
\]
Then, 
\[
 \bbE\lb X_\ell ~\big|~\calF_{\ell -m}\rb = 
\sum_{x\in \cHm{\ell - m}} \tq{x}\bbE\lb \fxq{\, } -1
 ~\big|~\calF_{\ell -m}\rb .
\]
Consequently, 
\[
\bbE\Bigl(  \bbE\lb X_\ell ~\big|~\calF_{\ell -m}\rb^2 \bigr) \leq 
\sum_{x ,\xpr \in \cHm{\ell -m}}
\left|
\bbE \tq{x}\tq{\xpr}  
\bbE\lb \fxq{\, } -1
 ~\big|~\calF_{\ell -m}\rb 
\bbE\lb \fxq{\, } -1
 ~\big|~\calF_{\ell -m}\rb 
\right| .
\]
The following notation is convenient: We say that $a_\ell\leqs b_\ell$ if there exists
a constant $c>0$ such that $a_\ell\leqs c b_\ell$ for all $\ell$ . In this language, 
using~\eqref{eq:WDBound}, we bound the latter expression by
\begin{equation}  
\label{eq:VarXlm}
\begin{split}
 &\leqs \frac{1}{\ell^{d}}\sum_{x\in \cHm{ \ell - m}}
e^{-c_2 \abs{x-v\ell}^2/\ell} \sum_{\xpr} e^{-c_2\abs{\xpr -x}}
\leqs  \frac{1}{\ell^{d}} 
\int_{\abs{y} >m} e^{- \abs{y}^2/l}\dd y\\
&= \frac{1}{\ell^{d}} 
\int_m^\infty r^d e^{-r^2 /l}\dd r \leqs 
 \frac{1}{\ell^{(d +1)/2 }}  e^{-m^2 /\ell }. 
\end{split}
\end{equation}
Noting that, for any $\epsilon$ fixed, 
\[
 \frac{e^{-  m^2 /\ell}}{\ell^{1/2 +\epsilon}} \leqs \frac{1}{(1+m)^{1+\epsilon}} ,
\]
we conclude that
\begin{equation}  
\label{eq:Condl-m}
 \bbE\lb  \bbE\lb X_\ell ~\big|~\calF_{\ell -m}\rb \rb^2 \leqs
\frac{1}{\ell^{ (d-1)  /2 -\epsilon}} \frac{1}{(1+m)^{1+\epsilon}} .
\end{equation}
Similarly, the main contribution to $X_\ell  - \bbE\lb X_\ell ~\big|~\calF_{\ell +m}\rb$ comes
from
\[
 \sum_{x\in\calH_{\ell +m}^+} \tq{x ,\ell}\lb\fxq{\, }-1\rb .
\]
By a completely similar computation, 
\begin{equation}  
\label{eq:Condl+m}
 \bbE\lb   X_\ell  - \bbE\lb X_\ell ~\big|~\calF_{\ell +m}\rb \rb^2 \leqs
\frac{1}{\ell^{ (d-1)/2 -\epsilon}}\cdot \frac{1}{(1+m)^{1+\epsilon}} .
\end{equation}
Therefore, \eqref{eq:McLeish} applies with $a_\ell^2 \eqvs \ell^{-(d-1)/2 -\epsilon}$. 
Since  $d\geq 4$, we rely on \eqref{eq:Maximal} and deduce \eqref{eq:EpsConv}.

\subsection{Quenched CLT.}  
One possible strategy for proving a $\bbP$-a.s CLT would be to try to adjust a powerful approach by
Bolthausen-Sznitman~\cite{BolthausenSznitmanCLT} which was developed in the context of ballistic 
RWRE. It appears, however, that a direct work on generating functions goes through. 
Let us introduce 
\[
\qS{n}{\omega}  (\alpha )\df 
\sum_{z} \ta{z ,n} e^{i \alpha \cdot (z - nv )/\sqrt{n}} .
\]
The asymptotics of $\aS{n} (\alpha) \df \bbE \qS{n}{\omega} (\alpha )$ are given in~\eqref{eq:AnCLT}.  Using~\eqref{eq:Sinai}, we can write
\begin{equation}  
\label{eq:S1}
\qS{n}{\omega}  (\alpha  ) = \aS{n} (\alpha  ) + 
\sum_{l+m +r =n} \sum_{x, y , z }
\tq{x ,l}\lb f^{\theta _x\omega}_{y-x, m} - \fa{y-x, m}\rb \ta{z- y, r}
e^{i \alpha  \cdot (z - nv )/\sqrt{n}} .
\end{equation}
Define $\alpha_n^r \df \alpha \sqrt{r/ n}$ and 
\begin{equation}  
\label{eq:GFunction}
 G^\omega_m (\alpha ) \df \sum_{y}e^{(y -mv )\cdot i\alpha }\lb \fq{y ,m }- \fa{m}\rb .
\end{equation}
We can rewrite~\eqref{eq:S1} as  
\begin{equation}  
\label{eq:S2}
\begin{split}
\qS{n}{\omega}  (\alpha  ) &= \aS{n} (\alpha  ) + 
 \sum_{l+m +r =n} 
\aS{r}\lb \alpha_n^r \rb
 \sum_{x}
\tq{x ,\ell}
e^{(x-\ell v)\cdot i\frac{\alpha}{\sqrt{n}}} G_m^{\theta_x\omega} \bigl( 
\frac{\alpha}{\sqrt{n}}\bigr) \\
&= \aS{n} (\alpha  )\Bigl( 1 + \sum_{\ell +m \leq n}\sum_x \tq{x ,\ell}\lb \fxq{m } - \fa{m}
\rb\Bigr)\\
&+  \sum_{l+m +r =n} \bigl( \aS{r} (\alpha_n^r ) - \aS{n} (\alpha )\bigr)
\sum_x \tq{x ,\ell}\lb \fxq{m } -\fa{m}\rb \\
&+ 
\sum_{l+m +r =n}  \aS{r} (\alpha_n^r ) 
\sum_x \tq{x ,\ell} \Bigl( G^{\theta_x \omega}_m  \bigl( \frac{\alpha}{\sqrt{n}}\bigr) - 
G^{\theta_x \omega}_m (0)\Bigr)
\\
&+ 
\sum_{l+m +r =n}  \aS{r} (\alpha_n^r )
\sum_x \tq{x ,\ell}
\bigl( e^{(x -\ell v)\cdot i\alpha/\sqrt{n}} -1\bigr) 
G^{\theta_x \omega}_m \bigl( \frac{\alpha}{\sqrt{n}}\bigr)\\
& \df \aS{n} (\alpha  ) s_n^\omega + \sum_{i=1}^3 \hat \calE_n^i(\omega ) ,
\end{split}
\end{equation}
where $s_n^\omega$ is as in~\eqref{eq:snterm}. 
\begin{theorem}
\label{thm:QuCLT}
For every $\alpha\in\bbR^d$, the correction terms $\hat \calE_n^i(\omega )$ in~\eqref{eq:S2} satisfy
\begin{equation} 
\label{eq:CortermsCLT}
\text{For $i=1,2,3$, }\ \ \lim_{n\to\infty} \hat \calE_n^i(\omega ) = 0\,, \ \text{$\bbP$-a.s.\ and 
in $L_2 (\Omega )$} .
\end{equation}
\end{theorem}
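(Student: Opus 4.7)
Each correction $\hat\calE_n^i$ $(i=1,2,3)$ shares the triple-convolution structure of the remainder $\epsilon_n^\omega$ from \eqref{eq:EpsTerm}, but with an extra factor multiplying the core quantity
\[
X_{\ell,m}^\omega \df \sum_x \tq{x,\ell}\bigl(f^{\theta_x\omega}_m - \fa{m}\bigr),
\]
or a derivative variant thereof. The plan is to treat all three by the same splitting: fix a slowly growing scale $N=N(n)$ with $N\to\infty$ and $N/n\to 0$, and handle separately the \emph{bulk} $\ell+m\leq N$, where the extra factor provides smallness, and the \emph{tail} $\ell+m>N$, which is small in $L_2$ via the $L_2$-control of $s_n^\omega-1$ already established during the proof of Theorem~\ref{thm:tqnAsympt}. $L_2$-decay in each case follows directly from \eqref{eq:WDBound}, while $\bbP$-a.s.\ convergence either comes for free via Borel-Cantelli (if $\bbE|\hat\calE_n^i|^2$ is summable in $n$) or is upgraded via McLeish's maximal inequality along the hyperplane filtration $\calF_m$ used in the proof of Theorem~\ref{thm:tqnAsympt}.

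\textbf{Treatment of $\hat\calE_n^1$.} Writing $r=n-\ell-m$ and using \eqref{eq:AnCLT} uniformly in the Fourier argument,
\[
\aS{r}(\alpha_n^r) = \frac{1}{\kappa}\, e^{-\tfrac12 (r/n)\Xi^{-1}\alpha\cdot\alpha}\bigl(1+o(1)\bigr),\qquad \aS{n}(\alpha) = \frac{1}{\kappa}\, e^{-\tfrac12 \Xi^{-1}\alpha\cdot\alpha}\bigl(1+o(1)\bigr).
\]
On the bulk $\ell+m\leq N$ the coefficient $\aS{r}(\alpha_n^r)-\aS{n}(\alpha)$ is $O(N/n)+o(1)$, and multiplying by $\sum_{\ell+m\leq N} X_{\ell,m}^\omega$ (whose $L_2$-norm is uniformly bounded by \eqref{eq:WDBound}) yields $o(1)$. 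On the tail $\ell+m> N$ the coefficient is uniformly bounded, so the contribution is dominated by $\bbE|\sum_{\ell+m>N} X_{\ell,m}^\omega|^2$, which vanishes as $N\to\infty$ uniformly in $n$ by the $L_2$-convergence of $s_n^\omega$.

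\textbf{Treatment of $\hat\calE_n^3$ and $\hat\calE_n^2$.} For $\hat\calE_n^3$ one Taylor-expands
\[
\bigl|e^{(x-\ell v)\cdot i\alpha/\sqrt n}-1\bigr|\leq \frac{|\alpha|\,|x-\ell v|}{\sqrt n}.
\]
Combining this with the annealed local-CLT scale $|x-\ell v|\sim\sqrt{\ell}$ of $\tq{x,\ell}$ and with \eqref{eq:WDBound} gives, upon squaring,
\[
\bbE\Bigl|\sum_x \tq{x,\ell}(x-\ell v)\,G^{\theta_x\omega}_m(\alpha/\sqrt n)\Bigr|^2 = O\bigl(\ell\,e^{-c_2 m}\bigr),
\]
so the bulk contribution is at most $O(|\alpha|^2 N/n)=o(1)$. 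For $\hat\calE_n^2$ one Taylor-expands
\[
G^{\theta_x\omega}_m(\alpha/\sqrt n)-G^{\theta_x\omega}_m(0) = O\bigl(\sqrt{m}\,|\alpha|/\sqrt n\bigr),
\]
using the exponential localization of $f^\omega_{y,m}$ around $y=mv$; the bulk contribution is then $O(|\alpha|^2\sqrt{N/n})=o(1)$. The tail $\ell+m>N$ is in both cases handled as for $\hat\calE_n^1$.

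\textbf{$\bbP$-a.s.\ convergence and the main obstacle.} Once $\bbE|\hat\calE_n^i|^2$ is controlled, $\bbP$-a.s.\ convergence follows by Borel-Cantelli whenever the bound is summable in $n$; otherwise one rewrites each $\hat\calE_n^i$ as a sum of zero-mean increments in the hyperplane filtration $\calF_m$ of Lemma~\ref{lem:FConc} and verifies \eqref{eq:McLeish} exactly as in the derivation of \eqref{eq:Condl-m}--\eqref{eq:Condl+m}. The genuinely delicate step is the sharpened $L_2$ estimate underlying $\hat\calE_n^3$: upgrading the naive bound $|x-\ell v|^2\leq \ell^2$ to the CLT-consistent $O(\ell)$ requires an extension of \eqref{eq:WDBound} in which the quenched end-point pair $(x_1,x_2)$ is weighted by the vectors $(x_1-\ell v)$ and $(x_2-\ell v)$. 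This refinement is within reach of the $L_2$ machinery of \cite{IoffeVelenik-AOP, IoffeVelenik-inprogress}, but it is the heart of the technical work; the remaining ingredients are structural reshufflings of the proof of Theorem~\ref{thm:tqnAsympt}.
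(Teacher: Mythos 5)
The paper does not actually prove this theorem: immediately after stating Theorem~\ref{thm:QuCLT} the authors write that ``the proof \dots is technical and will appear elsewhere~\cite{IoffeVelenik-inprogress},'' so there is no in-paper argument to compare against. Your sketch is therefore evaluated on internal plausibility and on consistency with the machinery the paper \emph{does} develop, namely the Sinai-type expansion and the $L_2$/McLeish analysis of Theorem~\ref{thm:tqnAsympt}.

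The overall architecture of your proposal --- bulk/tail split in $\ell+m$, $L_2$-control of the bulk via~\eqref{eq:WDBound}, reuse of the hyperplane filtration and McLeish inequality for a.s.\ convergence --- is the natural extension of the proof of Theorem~\ref{thm:tqnAsympt} to the Fourier-weighted sums, and you have correctly identified the structural features (the extra factor $\aS{r}(\alpha_n^r)-\aS{n}(\alpha)$ for $\hat\calE_n^1$, the Taylor expansion of the phase factor for $\hat\calE_n^3$, and of the increment of $G^\omega_m$ for $\hat\calE_n^2$). You also rightly flag $\hat\calE_n^3$ as the heart of the matter.

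A few things should be tightened. First, your stated intermediate bound $\bbE\bigl|\sum_x \tq{x,\ell}(x-\ell v)G^{\theta_x\omega}_m(\alpha/\sqrt n)\bigr|^2=O(\ell\, e^{-c_2 m})$ omits the $\ell^{-d}$ prefactor in~\eqref{eq:WDBound}; carrying the Gaussian factor $\exp(-c_2|x_1-\ell v|^2/\ell)$ through the weighted sum $\sum_{x_1}|x_1-\ell v|^2(\cdots)$ produces $O(\ell^{1-d/2}e^{-c_2 m})$, which for $d\ge 4$ is $O(\ell^{-1}e^{-c_2 m})$. This is what actually makes the $\ell$-sum converge (up to a $\log$ in $d=4$), and without it your claimed bulk bound $O(|\alpha|^2 N/n)$ does not follow from your own estimate: with $O(\ell e^{-c_2m})$ per term one would get $O(|\alpha|^2 N^2/n)$ from a McLeish sum, and much worse from the triangle inequality. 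Second, the ``refinement'' of~\eqref{eq:WDBound} you describe is, as far as one can tell, not a new input but merely the observation that the Gaussian decay already built into the right-hand side of~\eqref{eq:WDBound} permits inserting polynomial weights $|x_i-\ell v|$ before summing --- so the obstacle is lighter than you suggest, provided~\eqref{eq:WDBound} itself is in place. Finally, your pointwise Taylor estimate for $\hat\calE_n^2$, $G^{\theta_x\omega}_m(\alpha/\sqrt n)-G^{\theta_x\omega}_m(0)=O(\sqrt m\,|\alpha|/\sqrt n)$, should read $O(m\,|\alpha|/\sqrt n)$ (for a nearest-neighbour irreducible piece of length $m$, $|y-mv|$ is $O(m)$, not $O(\sqrt m)$); the conclusion survives because of the exponential decay in $m$, but as written the step is unjustified. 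None of these are conceptual errors, but they are precisely the quantitative bookkeeping that the deferred proof would have to get right, and your writeup as it stands does not.
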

The proof of Theorem~\ref{thm:QuCLT} is technical and will appear elsewhere~\cite{IoffeVelenik-inprogress}. In view of~\eqref{eq:AnCLT} and~\eqref{eq:tqnAsympt}, the convergence in~\eqref{eq:CortermsCLT} implies that
\begin{equation} 
\label{eq:QuCLT} 
\lim_{n\to\infty} \frac{\qS{n}{\omega}  (\alpha )}{\tq{n}} = \exp\bigl\{ - \frac12\Xi^{-1}\, \alpha \cdot\alpha\bigr\} ,
\end{equation}
$\bbP$-a.s.\ for every $\alpha\in\bbR^d$ fixed. 

\section{Strong Disorder}
\label{sec:strong} 
In this section, we do not impose any moment  assumptions on the environment $\lbr V (x) \rbr$.  Even the case of traps (i.e., when $\bbP \lb V =\infty \rb >0$) is not excluded. We still need that $\bbP\lb V\neq 0,\infty \rb >0$. Without loss of generality, we shall assume that $\bbP \lb V\in (0,1 ]\rb  >0$. Under this sole assumption, the environment is always strong in two dimensions in the following sense.
\begin{theorem}
\label{thm:Strong}
Let $d=2$ and $\beta , \lambda >0$. There exists $c = c(\beta ,\lambda ) >0$ such that the following holds: For any $x\in\bbS^1$ define $x_n =\lfloor nx\rfloor$. Then, 
\begin{equation}  
\label{eq:Strong}
\limsup_{n\to\infty} \frac{1}{n}\log\frac{Q_\lambda (x_n )}{A_\lambda (x_n )} < -c.
\end{equation}
In particular, $\fra_\lambda <\frq_\lambda$ whenever $\frq_\lambda$ is well defined.
\end{theorem}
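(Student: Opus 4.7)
The plan is to apply the fractional moment method of~\cite{Lacoin}: I would fix some $\theta\in (0,1)$ and establish
\[
\bbE\Bigl[\Bigl( \frac{Q_\lambda (x_n )}{A_\lambda (x_n )}\Bigr)^\theta\Bigr] \leq e^{-cn}
\]
for some $c=c(\theta,\lambda,\beta)>0$. The theorem then follows from Markov's inequality and Borel--Cantelli applied along a geometric subsequence $n_k = 2^k$, with the intermediate $n$ handled via sub/super-multiplicativity of $Q_\lambda $ and $A_\lambda $.

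\textbf{Coarse graining by cone-points.} Using the irreducible decomposition of Theorem~\ref{thm:QConePoints}, I would group the typical $\Theta (n)$ irreducible cone-pieces into $M=\lfloor n/L\rfloor$ mesoscopic blocks of total displacement $\sim L$ each, so that the block factors $Q_i^\omega $ in the resulting product $Q_\lambda (x_n )=\sum_\sigma \prod_{i=1}^M Q_i^\omega (\sigma )$ are jointly independent in $\omega$ for every fixed skeleton $\sigma $ (the cone property forces disjoint supports of consecutive pieces). Sub-additivity $ (\sum_i a_i )^\theta \leq \sum_i a_i^\theta$ and independence then reduce the estimate to
\[
\bbE\bigl[Q_\lambda(x_n)^\theta \bigr] \leq \sum_\sigma \prod_{i=1}^M \bbE\bigl[Q_i^\omega(\sigma)^\theta \bigr].
\]

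\textbf{Per-block tilt and the $d=2$ input.} On each block I would introduce a local change of measure $\tilde{\bbP}_i$ shifting $V\mapsto V+\delta$ on the sites where $V\in (0,1]$. H\"older's inequality gives
\[
\bbE\bigl[Q_i^\omega(\sigma)^\theta \bigr] \leq \bigl(\bbE_{\tilde{\bbP}_i}Q_i^\omega(\sigma)\bigr)^\theta \bigl(\bbE_{\tilde{\bbP}_i}\lbr (\dd\bbP/\dd\tilde{\bbP}_i )^{1/(1-\theta )}\rbr \bigr)^{1-\theta},
\]
with the entropy factor bounded by $e^{c_1\delta^2 L}$ uniformly in $\sigma$. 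The two-dimensional input enters in controlling the first factor: since $\bbP (V\in (0,1])>0$, a standard finite-energy planar percolation argument ensures that positive-bounded-potential sites form a transverse dual crossing of any slab of width $L$ with probability bounded below by some $\rho >0$, uniformly in $L$. Any nearest-neighbor trajectory crossing such a block must pass through one of these sites, so the shifted annealed block mass loses a uniform factor $e^{-c_0 L}$. Choosing $\delta $ small enough that $c_0\theta > c_1\delta^2 $, multiplying over the $M = n/L$ blocks, summing over skeletons, and comparing with the analogous product decomposition of $A_\lambda (x_n )$ yields the target exponential saving.

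\textbf{Main obstacle.} The decisive ingredient is the planar percolation estimate: regardless of how small $\bbP (V\in (0,1])$ is, such sites form blocking dual crossings of any thick enough slab with positive probability in $d=2$. This is what makes the per-block tilt bite against every admissible trajectory; in $d\geq 3$ polymers have enough room to detour around any finite obstruction and the argument breaks down, consistent with the higher-dimensional weak-disorder results discussed earlier in the paper. Balancing the block-wise gain $c_0$ against the quadratic-in-$\delta$ entropy cost while preserving independence across blocks under the skeleton sum is the remaining quantitative point.
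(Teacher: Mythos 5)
Your overall framework (fractional moments, tilting the environment, balancing an entropy cost against a first-order gain) is the right one, and it is indeed the scheme the paper follows, but two of the specific ingredients you propose are incorrect and the argument does not close.

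First, the percolation-blocking mechanism does not work. You claim that, since $\bbP(V\in(0,1])>0$, the positive-bounded sites form a transverse dual crossing of any slab of width $L$ with probability bounded below uniformly in $L$, and that every admissible trajectory therefore loses a factor $e^{-c_0 L}$. If $p=\bbP(V\in(0,1])$ is small, these sites are a sparse subcritical Bernoulli field; in a region of size $L\times\sqrt{L}$ (which is where a ballistic polymer of $L$ irreducible steps lives), the probability of even a single dual blocking crossing decays exponentially in $\sqrt{L}$, so no such uniform lower bound exists. Even granting a crossing, hitting one extra bad site costs the trajectory $O(\beta)$, i.e.\ a factor $e^{-O(1)}$, not $e^{-c_0 L}$; to accumulate an $O(L)$ exponent you would need $\Theta(L)$ disjoint crossings, which is far from true in the subcritical regime. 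No hard blocking is needed, and indeed the paper avoids it entirely: it uses the change of measure
$\frac{\dd\tilde\bbP}{\dd\bbP}(V)=e^{-\delta_N(V\wedge 1)+g(\delta_N)}$ on all sites of a deterministic box $A_N=\{0,\dots,KN\}\times\{-N^{1/2+\epsilon},\dots,N^{1/2+\epsilon}\}$, and the first-order gain comes from the soft analytic fact that the tilted annealed weight of an irreducible piece satisfies $\tilde f'(0)<0$, giving $\tilde\bbE\, r^\omega_N\leq e^{-c\delta_N N}$. No trajectory is forced to hit anything.

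Second, your entropy accounting is off, and this is exactly where dimension $d=2$ really enters. The relative-entropy cost of a tilt of size $\delta$ on a region is quadratic in $\delta$ and proportional to the \emph{area} of that region, not to the polymer length. A block of $L$ irreducible steps with diffusive transverse spread occupies a region of area $\gtrsim L^{3/2}$, not $L$; the paper's box $A_N$ has area $\sim N^{3/2+\epsilon}$ and pays $\exp\{c\,\delta_N^2 N^{3/2+\epsilon}\}$. (Tilting only on sites that the polymer actually visits is not allowed, since the change of measure in H\"older's inequality must be deterministic and independent of the trajectory.) The whole point of the argument is the subquadratic scaling of this area in two dimensions: one must choose $\delta_N$ so that simultaneously $\delta_N^2 |A_N|\ll\delta_N N$ and $\delta_N N\to\infty$, which forces $\delta_N\ll N^{-1/2-\epsilon}$ yet $\delta_N\gg N^{-1}$; the window exists precisely because $|A_N|\sim N^{3/2+\epsilon}$ is strictly smaller than $N^2$. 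Your constant-$\delta$, per-block balance $\delta^2 L$ vs.\ $\delta L$ hides this tension by implicitly assuming a one-dimensional entropy cost, and with the correct area the balance fails for constant $\delta$. So the decisive $d=2$ input is not a planar blocking estimate but the subquadratic volume of a diffusive tube, and that is what your proposal is missing.
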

\begin{remark}
As in~\cite{Lacoin} and, subsequently, \cite{Zygouras-StrongDisorder} proving strong disorder in dimension 
$d=3$ is a substantially more delicate task.
\end{remark}
Let us explain Theorem~\ref{thm:Strong}:  By the exponential Markov's inequality 
(and Borel-Cantelli) it is sufficient to prove that there exist $c^\prime >0$ and  $\alpha >0$ such that 
\begin{equation} 
\label{eq:FracMom}
\bbE 
\lb \frac{Q_\lambda (x_n )}{A_\lambda (x_n )}\rb^\alpha  \leq e^{-c^\prime  n} .
\end{equation}
\textbf{Normalization.}
In order to facilitate the notation we shall proceed with an on-axis case $x = (0,x)$.
Let $h = (0,h) \in\partial\Kal{\lambda}$ be unambiguously defined by the relation $\fra_\lambda (x ) =h\cdot x$.  We shall explore
\[
\frac{1}{n}\log\frac{Q_{\lambda ,h} (x_n )}{A_{\lambda, h} (x_n )}
\df 
\frac{1}{n}\log
\frac{e^{h\cdot x_n}Q_\lambda (x_n )}{e^{h\cdot x_n }A_\lambda (x_n )} .
\]
Since the annealed Lyapunov exponent $\fra_{\lambda}$ is well-defined, we can rely on the logarithmic equivalence  $A_{\lambda, h} (x_n ) \asymp 1$.
Note that, for any family of paths $\Gamma_n$, 
\[
 A_{\lambda, h} \lb X(\gamma ) = x_n ;\Gamma_n\rb \df 
 A_{\lambda, h} \lb  x_n ;\Gamma_n\rb = \bbE Q_{\lambda, h} \lb  x_n ;\Gamma_n\rb .
\]
Consequently, by the exponential Markov inequality and Borel-Cantelli Lemma,  we can ignore the families $\Gamma_n$ for which $A_{\lambda, h} \lb X(\gamma ) = x_n ;\Gamma_n\rb\leq e^{-c\abs{x_n}}$. 

\smallskip
\noindent
\textbf{Reduction to Basic Partition Functions.}
In particular, we can restrict attention to paths $\gamma$ which have at least two cone points. With a slight abuse of notation,
\[
Q_{\lambda, h} \lb  x_n\rb = \sum_{y ,z} f^\omega_{>} (y) 
t^{\theta_y\omega}_{z-y}f^{\theta_z \omega}_{<} (x_n -z) ,
\]
where $f^\omega_{>}$ and $f^{\theta_z \omega}_{<}$ are the weights of the 
initial and the final irreducible pieces $\omega_{>}$ and $\omega_{<}$ in 
the decomposition \eqref{eq:DecompositionIntoIrreduciblePieces}. 
The left and right irreducible partition functions satisfy $\bbE  f^\omega_{>} (u), 
\bbE  f^\omega_{<} (u)
 \leq e^{-c \abs{u}}$. Consequently, for fractional powers $\alpha\in (0,1)$, 
\[
\bbE\lb  Q_{\lambda, h} \lb  x_n\rb \rb^\alpha \leq 
\sum_{y ,z}
e^{-c\abs{y}}   
\bbE \lb \tq{z-y}\rb^\alpha 
e^{-c\abs{x_n -z}} .
\]
As a result we need to check that
\begin{equation}  
\label{eq:tterm}
\limsup_{u\to\infty}\frac{1}{\abs{u}}\bbE \lb \tq{u}\rb^\alpha < 0.
\end{equation}
In its turn, \eqref{eq:tterm} is routinely implied by the following  statement (\eqref{eq:rterm} below): Let $\rrq{N}$ be the partition function of $N$ irreducible steps:
\begin{equation}  
\label{eq:pfSteps}
\rrq{N} = \sum_x \rrq{x,N}  \df \sum_x \sum_{u_1,\cdots, u_{N-1}} f^\omega_{u_1} f^{\theta_{u_1}\omega}_{u_2 -u_1}
\cdots 
f^{\theta_{u_{N-1}}\omega}_{x -u_{N-1}} .
\end{equation}
Then, 
\begin{equation} 
\label{eq:rterm}
\limsup_{N\to\infty} 
\frac{1}{N}
\log\bbE\lb \rrq{N}\rb^\alpha < 0 .
\end{equation}
Note by the way that by the very definition of the irreducible decomposition any trajectory $\gamma$ which contributes to a $\underline{u}=(u_0 , u_1, \dots, u_N )$-term in \eqref{eq:pfSteps} is confined to the set $D (\underline{u} ) = \cup_\ell D(u_{\ell - 1} ,u_\ell)$ where the diamond shape $D(u_{\ell - 1} ,u_\ell)\df \lb u_{\ell - 1} + \fcone\rb\cap\lb u_\ell +\bcone\rb $.

\smallskip
\noindent
\textbf{Fractional Moments.}
Following Lacoin~\cite{Lacoin}, \eqref{eq:rterm} follows once we show that there exist $N$ and $\alpha\in (0,1)$ such that
\begin{equation} 
\label{eq:target}
\bbE\sum_x \lb \rrq{x ,N}\rb^\alpha <1 .
\end{equation}
Pick $K$ sufficiently large and $\epsilon$ small, and consider
\[
A_N = \{0, \ldots ,KN\} \times \{-N^{1/2 +\epsilon}, \ldots,  N^{1/2 +\epsilon}\} \subset\bbZ^2 .
\]
Since $\bbE \bigl(\rrq{x ,N}\bigr)^\alpha \leq \bigl( \bbE \rrq{N,x} \bigr)^\alpha \df  r_{N,x}^\alpha$, annealed estimates enable us to restrict attention to $x\in A_N$. Furthermore, since, as was explained in Subsection~\ref{ssec:OZ}, $r_N$ is a partition function which corresponds to an effective random 
walk with an on-axis drift and exponential tails we may restrict attention only to the effective trajectories $\underline{u}$ which satisfy $D (\underline{u} )\subseteq A_N$. By the confinement property of the irreducible decomposition we may therefore restrict attention to microscopic polymer configurations $\gamma$ which stay inside $A_N$.

At this stage, we shall modify the distribution of the environment inside  $A_N$ in the following way: The modified law of the environment $\tilde\bbP$ is still product and, for every $x\in A_N$,
\[
 \frac{\dd \tilde\bbP}{\dd \bbP}
\lb V \rb 
\df e^{-\delta_N \lb V\wedge 1\rb + g (\delta_N )},  \quad 
\text{where $e^{- g (\delta )} = \bbE e^{-\delta \lb V\wedge 1\rb }$.}
\]
From Hölder's inequality,
\[
 \bbE \lb \rrq{x ,N}\rb^\alpha \leq \Bigl( \tilde \bbE\bigl(  
\frac{\dd \tilde\bbP}{\dd \bbP}\bigr)^{1/(1-\alpha ) }\Bigr)^{1-\alpha}
\bigl( \tilde\bbE\, \rrq{x ,N} \bigr)^\alpha .
\]
Now, the first term is 
\[
\Bigl( \tilde\bbE\, \bigl(
\frac{\dd \tilde\bbP}{\dd \bbP}
\bigr)^{1/(1-\alpha ) }\Bigr)^{1-\alpha} 
= 
\Bigl( \tilde\bbE\, \exp\Bigl\{ \Bigl( \delta_N (V\wedge 1) - g (\delta_N )\Bigr)/(1-\alpha )\Bigr\}
\Bigr)^{(1-\alpha )\abs{A_N}} .
\]
However, the first order terms in $\delta_N$ cancel,
\begin{equation} 
\label{eq:quadratic}
\begin{split}
&\tilde\bbE\,  \exp\Bigl\{ \frac{\delta_N (V\wedge 1) - g (\delta_N )}{1-\alpha}\Bigr\}
 = 
\bbE\,  \exp\Bigl\{ \frac{\alpha}{1-\alpha} \bigl(\delta_N (V\wedge 1) - g (\delta_N ) \bigr)\Bigr\} \\
& \quad = \exp\Bigl\{ - g \bigl( -\frac{\alpha}{1-\alpha} \delta_N \bigr) - 
\frac{\alpha}{1-\alpha}g\lb \delta_N \rb \Bigr\} 
\leq \exp\Bigl\{ \frac{\alpha}{(1-\alpha^2 )^2}\delta_N^2 \Bigr\} .
\end{split}
\end{equation}
On the other hand (recall that $\calF$ is the set of irreducible paths), 
\[
 \tilde\bbE \rrq{x, N} \leq \tilde \bbE\rrq{N} = 
\Bigl( \tilde \bbE\sum_{\gamma\in\calF} q_{\lambda ,h} (\gamma )\Bigr)^N \df 
\tilde f (\delta_N )^N .
\]
It is straightforward to check that $\tilde f^\prime (0) <0$. As a result, $\tilde \bbE\rrq{N} \leq e^{-c\delta_N}$. 

\noindent
We are now ready to specify the choice of $\delta_N$. We want to have simultaneously
\[
\delta_N^2 \abs{A_N} \ll \delta_N N \quad\text{and}\quad \delta_N N\gg N^\epsilon .
\]
The choice $\delta_N = N^{-1/2 - 2\epsilon}$ with $\epsilon \in (0,1/3 )$ qualifies, and~\eqref{eq:target} follows.\qed

\bigskip
\textbf{Acknowledgments.} The research of DI was supported by the Israeli Science Foundation
(grant No. 817/09). YV is partially supported by the Swiss National Science Foundation.

\end{document}